\documentclass[10pt]{aims}
\usepackage{amsmath, amssymb, mathrsfs}
  \usepackage{paralist}
  \usepackage{graphics} 
  \usepackage{epsfig} 
\usepackage{graphicx} 
 \usepackage{epstopdf}
 \usepackage[colorlinks=true]{hyperref}
 \hypersetup{urlcolor=blue, citecolor=blue}
 \usepackage[toc,page]{appendix}
\usepackage{chngcntr}
\usepackage{hyperref}

\usepackage{titlesec}
\titleformat{\section}{\Large\bfseries}{\thesection.}{4pt}{}
\titleformat{\subsection}{\large\bfseries}{\thesection.\arabic{subsection}.}{4pt}{}
\titleformat{\subsubsection}{\bfseries}{\thesection.\arabic{subsection}.\arabic{subsubsection}.}{4pt}{}
\titleformat*{\paragraph}{\bfseries}
\titleformat*{\subparagraph}{\bfseries}
\setcounter{secnumdepth}{3}

\usepackage[margin=1in]{geometry}

\newtheorem{theorem}{Theorem}[section]
\newtheorem{corollary}[theorem]{Corollary}

\newtheorem{lemma}[theorem]{Lemma}
\newtheorem{proposition}[theorem]{Proposition}
\theoremstyle{definition}
\newtheorem{definition}[theorem]{Definition}
\newtheorem{remark}[theorem]{Remark}

\numberwithin{equation}{section}

\title[The profile  for the imaginary part of a blowup solution]{Profile for the imaginary part of  a  blowup solution for a  complex-valued semilinear  Heat
 Equation}

\author[G. K. Duong]{}
\subjclass{Primary: 35K55, 35K57 35K50, 35B44; Secondary: 35K50, 35B40.}
 \keywords{Blowup solution, Blowup profile, Stability, Semilinear complex  heat equation, non variation heat equation}


\thanks{\today}
\begin{document}
\maketitle

\centerline{Giao Ky Duong \footnote{ G. K. Duong  is supported   by the project INSPIRE. This project has received funding from the European Union’s Horizon 2020 research and innovation programme under the Marie Sk\l odowska-Curie grant agreement No 665850.}  }
\medskip
{\footnotesize
  \centerline{ Universit\'e Paris 13, Sorbonne Paris Cit\'e, LAGA,  CNRS (UMR 7539), F-93430, Villetaneuse, France.} 
}

\bigskip
\begin{center}\thanks{\today}\end{center}

\begin{abstract} 
In this paper, we  consider  the following complex-valued semilinear   heat equation
\begin{eqnarray*}
\partial_t u =  \Delta u + u^p, u \in \mathbb{C},
\end{eqnarray*}
in the whole space $\mathbb{R}^n$, where $ p \in \mathbb{N}, p \geq 2$.
 We aim at   constructing  for this equation a complex solution $u = u_1 + i u_2$, which blows up in finite 
  time $T$ and only at one blowup point $a$, with  the following estimates for the final profile 
\begin{eqnarray*}
u(x,T)  &\sim &  \left[ \frac{(p-1)^2 |x-a|^2}{ 8 p |\ln|x-a||}\right]^{-\frac{1}{p-1}},\\
u_2(x,T)  &\sim &   \frac{2 p}{(p-1)^2}  \left[ \frac{ (p-1)^2|x-a|^2}{ 8p |\ln|x-a||}\right]^{-\frac{1}{p-1}}\frac{1}{ |\ln|x-a||} , \text{ as } x \to a.
\end{eqnarray*}
Note that the imaginary part is non-zero and that it blows up also at point $a$. 
 Our method  relies on  two main  arguments:  the reduction of
 the  problem  to a  finite  dimensional one  and a topological argument based
   on the index theory to get the conclusion.  Up to our knowledge,  this is the first time where  the blowup behavior of the imaginary part is derived in multi-dimension.
\end{abstract}

\maketitle
\section{Introduction}
In this work,   we are interested in the following complex-valued semilinear  heat equation
\begin{equation}\label{equ:problem}
\left\{
\begin{array}{rcl}
\partial_t u &=& \Delta u +  F(u), t \in [0,T), \\[0.2cm]
u(0) &=& u_0 \in L^\infty,
\end{array}
\right.
\end{equation}
where  $F(u) = u^p$ and $u(t): \mathbb{R}^n \to \mathbb{C}$,  $L^\infty := L^\infty(\mathbb{R}^n, \mathbb{C})$,  $p   > 1$.  Though our results hold only when $p \in \mathbb{N}$ (see Theorem \ref{Theorem-profile-complex} below), we keep $p\in \mathbb{R}$ in the  introduction, in order to broaden the discussion.

In particular,   when $p =2$,  model \eqref{equ:problem}  evidently  becomes
\begin{equation}\label{equation-problem-p=2}
\left\{
\begin{array}{rcl}
\partial_t u &=& \Delta u +  u^2, t \in [0,T), \\[0.2cm]
u(0) &=& u_0 \in L^\infty.
\end{array}
\right.
\end{equation}

We     remark that equation  \eqref{equation-problem-p=2} is rigidly  related to the viscous Constantin-Lax-Majda equation with a viscosity term,
 which is a one dimensional model for the  vorticity  equation in fluids. 
 The readers can    see more  in some of  the typical works:  Constantin, Lax, Majda \cite{CLMCPAM1985}, Guo, Ninomiya and  Yanagida 
 in \cite{GNYTAMS2013},   Okamoto, Sakajo and Wunsch  \cite{OSWnon2008}, Sakajo in  \cite{SMAthScitokyo2003} and \cite{Snon2003}, Schochet \cite{SCPAM1986}. 
The  local  Cauchy problem for model \eqref{equ:problem} can be   solved (locally in time)
  in $L^{\infty}(\mathbb{R}^n,\mathbb{C})$  if    $p$ is integer,  by using a  fixed-point argument. However,  when  $p $ is not  integer,  the  local  Cauchy  problem has not   been  sloved yet, up to our knowledge. This probably  comes from   the discontinuity of    $F(u) $  on $\{u   \in \mathbb{R}^*_-\}$.  In addition to  that,  let us remark that equation 
\eqref{equ:problem} has the following family of space  independent solutions:
\begin{equation}\label{solu-indipendent-p-notin-Q}
u_k (t) = \kappa e^{i \frac{2k\pi}{p-1}} \left( T -t\right)^{-\frac{1}{p-1}},\text{ for any } k \in \mathbb{Z},  
\end{equation} 
where $\kappa = (p-1)^{-\frac{1}{p-1}}$. 

If $p \in \mathbb{Q}$, this makes a finite number of solutions.

If $p \notin \mathbb{Q},$  then the set
\begin{equation}\label{rescaling-u_k}
\left\{ u_k(t) \frac{(T - t)^{\frac{1}{p-1}}}{\kappa} \left| \right. \quad k \in \mathbb{Z}    \right\},
\end{equation}
is  countable  and dense in the unit circle  of $\mathbb{C}$.

\noindent This latter case ($p \notin \mathbb{Q}$), is somehow  intermediate between 
the case $(p \in \mathbb{Q})$ and the case of the twin PDE 
\begin{equation}\label{heat-real-|u|p-1u}
\partial_t u = \Delta u + |u|^{p-1} u,
\end{equation}
which admits the following family of space independent solutions 
 $$ u_{\theta} (t) = \kappa e^{i \theta} (T -t)^{-\frac{1}{p-1}},$$
for any $\theta \in \mathbb{R}$, which  turns to be infinite  and covers all the unit 
circle, after rescaling as in \eqref{rescaling-u_k}.  In fact,   equation \eqref{heat-real-|u|p-1u}  is certainly much easier than equation \eqref{equ:problem}. As a mater of  fact, it reduces to the scalar case thanks to a modulation technique, as Filippas and Merle did in \cite{FMjde95}. 
\bigskip

Since the Cauchy  problem for equation \eqref{equ:problem} is already  hard when $p \notin \mathbb{N}$, and  given that we are more interested in the asymptotic blowup behavior, rather than the well-posedness issue, we will focus in our paper on the case $p \in \mathbb{N}$. 
In this case, from the Cauchy theory, the solution of equation \eqref{equ:problem}   either    exists globally or blows up in finite time.     Let us  recall that the solution $u(t) = u_1  (t) + i u_2(t)$ blows up in finite time $T < +\infty$ if and only if it exists for all $t \in [0,T)$ and 
$$\limsup_{t \to T} \{\|u_1(t)\|_{L^{\infty}} + \| u_2(t)\|_{L^{\infty}}\} \to +\infty.$$  
 If $u$ blows up in finite time $T$, a point $a \in \mathbb{R}^n$ is called a  blowup point if and only if 
there exists a sequence $\{(a_j, t_j)\} \to (a,T)$ as $j \to +\infty$ such that 
$$ |u_1(a_j,t_j)| + |u_2(a_j,t_j)| \to +\infty \text{ as } j \to +\infty.$$

\bigskip
 The blowup phenomena  occur  for  evolution equations in general,  and  in semilinear heat equations in particular.  Accordingly, an interesting question is  to construct for those equations   a solution which   blows up in finite time and  to describe its blowup behavior. These questions are being  studied by many authors in the world. Let us  recall some blowup results connected to our equation:
 
\bigskip
\textbf{ $(i)$ The real case:}     Bricmont and
 Kupiainen \cite{BKnon94}  constructed a real positive solution  to \eqref{equ:problem} for all $p> 1$,  which blows up in finite time $T$, only at the origin and they also gave the  profile of  the solution such that
 $$  \left\|  (T-t)^{\frac{1}{p-1}}  u (x,t)  - f_0 \left( \frac{x}{\sqrt{(T-t)|\ln (T - t)|} } \right) \right\|_{L^{\infty}(\mathbb{R}^n)} \leq \frac{C}{ 1 + \sqrt{|\ln (T-t)|}},$$
 where the profile $f_0 $ is defined  as follows
 \begin{equation}\label{defini-f-0}
 f_0 (z) = \left( p-1 + \frac{(p-1)^2 |z|^2}{4p}\right)^{-\frac{1}{p-1}}.
 \end{equation}
 
 \bigskip
 \noindent In addition to  that,   with a  different method, Herrero and  Vel\'azquez in \cite{HVcpde92}   obtained  the same  result. Later,  in \cite{MZdm97}  Merle and Zaag simplified  the proof  of  \cite{BKnon94} and    proposed the following two-step method (see also the note   \cite{MZasp96}):
    \begin{itemize}
    \item[-] Reduction of the  infinite dimensional problem to a finite dimensional one.
    \item[-]   Solution of the finite   dimensional problem thanks to a topological  argument based  on  Index theory.
    \end{itemize}
 We would like to mention that this   method has  been   successful  in  various  situations  such as the work of Tayachi and Zaag \cite{TZpre15}, 	and also  the works of Ghoul, Nguyen and Zaag in \cite{GNZpre16a}, \cite{GNZpre16b}, and \cite{GNZsysparabolic2016}. In those papers, the considered equations were scale invariant;   this  property was believed to be essential   for the construction. Fortunately, with the work of  Ebde and Zaag  \cite{EZsema11}  for the following equation 
$$   \partial_t u =    \Delta u + |u|^{p-1} u + f (u, \nabla u),$$
 where 
 $$ |f(u,\nabla u )  | \leq C ( 1 + |u|^q   + | \nabla u|^{q'}) \text{ with }  q < 	p , q' < \frac{2p}{p+1},$$
 that belief  was proved to be wrong.
 
\noindent   Going in the same direction as  \cite{EZsema11},  Nguyen and Zaag in \cite{NZens16}, have achieved   the construction  with a   stronger perturbation  
 $$ \partial_t u  = \Delta u  + | u|^{p-1} u + \frac{\mu  |u|^{p - 1} u }{ \ln ^a (2 + u^2)} ,$$    
   where  $\mu \in \mathbb{R},  a > 0 $.  Though the results of  \cite{EZsema11}  and \cite{NZens16}  show that the invariance  under  dilations of the equation  in  not necessary in the construction method, we might  think that the construction of \cite{EZsema11}  and \cite{NZens16}  works  because the authors adopt a  perturbative  method around the pure power case $ F(u) =   |u|^{p-1} u$. If this is true  with \cite{EZsema11}, it is not the case for \cite{NZens16}.  In order to totally prove that the construction does not need  the invariance by dilation,  Duong, Nguyen and Zaag  considered  in \cite{DNZtunisian-2017} the following  equation 
   $$ \partial_t u =  \Delta u  + |u|^{p - 1} u \ln ^\alpha ( 2 + u^2),$$
for some where $\alpha \in \mathbb{R} $ and $  p> 1$, where we have no invariance under dilation, not even for the main term on the nonlinearity. They   were   successful in constructing a stable blowup solution for that  equation. Following the above mentioned discussion,    that work  has to be considered  as a breakthrough.

\bigskip
 \noindent Let us mention that a classification  of  the blowup behavior of \eqref{equation-problem-p=2} was  made available    by many authors  such as   Herrero and Vel\'azquez in \cite{HVcpde92}  and Vel\'azquez in \cite{VELcpde92}, \cite{VELtams93}, \cite{VELiumj93} (see also  Zaag in \cite{ZAAcmp02} for some refinement). More precisely and  just to stay in one space dimension for simplicity, it is  proven in  \cite{HVcpde92}    that if $u$ a real solution of \eqref{equ:problem},  which blows up in finite time $T$  and $a$ is a given blowup point, then:
\begin{itemize}
\item[$A.$] Either 
$$\sup_{|x - a| \leq K \sqrt{(T - t) |\ln   (T - t)|} }  \left| \left( T - t\right)^{\frac{1}{p-1}} u(x,t)  - f_0\left( \frac{x - a}{\sqrt{(T - t) |\ln   (T - t)|}}\right)\right| \to 0 \text{ as } t \to T,$$
for any $K > 0$ where   $f_0 (z)  $ is defined in \eqref{defini-f-0}.
\item[$B.$] Or, there exist $m  \geq 2, m \in \mathbb{N}$ and $C_m > 0$ such that
$$\sup_{|x - a| \leq K (T - t)^{\frac{1}{ 2 m}} }  \left| \left( T - t\right)u(x,t)  - f_m\left( \frac{C_m(x - a)}{ (T - t)^{\frac{1}{2m}}}\right)\right| \to 0 \text{ as } t \to T,$$
for any $K > 0$, where $f_m (z)  =  (p-1  + |z|^{2m})^{-\frac{1}{p-1}}$.
\end{itemize} 

\medskip
\textbf{    $(ii)$  The complex case:}     	The blowup   question for the complex-valued parabolic equations has been  studied  intensively by many authors, in particular  for  the Complex    Ginzburg Landau (CGL) equation 
\begin{equation}\label{ginzburg-landau}
 \partial_t u =   (1 + i \beta ) \Delta u   + (1  + i \delta) | u|^{p-1} u  + \gamma u. 
\end{equation}
 This is the case of an   ealier work of  Zaag  in \cite{ZAAihn98}    for  equation \eqref{ginzburg-landau} when     $\beta = 0$ and $ \delta$ small enough. Later,  Masmoudi and Zaag  in  \cite{MZjfa08}  generalized the result of  \cite{ZAAihn98} and  constructed  a blowup solution  for \eqref{ginzburg-landau}  with    $ p - \delta^2 -  \beta \delta   -  \beta  \delta p   >0   $      such that the solution satisfies the following
\begin{eqnarray*}
\left\|  (T - t)^{\frac{1 + i \delta}{p-1}  }    | \ln (T -t)|^{-i \mu } u(x,t) -   \left(p-1  + \frac{b_{sub} |x|^2}{(T-t)|\ln (T -t)|} \right)^{- \frac{1 + i \delta}{p-1}} \right\|_{L^\infty}    \leq \frac{C}{1 + \sqrt{| \ln(T-t|}},
\end{eqnarray*}
  where 
  $$b_{sub}   =  \frac{(p-1)^2}{ 4 ( p - \delta^2 -  \beta \delta   -  \beta  \delta p   )} > 0 .$$
 Then,   Nouaili and Zaag  in \cite{NZ2017}  has constructed for \eqref{ginzburg-landau} (in case  the critical where  $\beta = 0$ and $ p = \delta^2 $) a  blowup solution
   satisfying 
   \begin{eqnarray*}
   \left\|  (T - t)^{\frac{1 + i \delta}{p-1}  }    | \ln (T -t)|^{-i \mu } u(x,t) -   \kappa^{-i \delta }\left(p-1  + \frac{b_{cri} |x|^2}{(T-t)|\ln (T -t)|^{\frac{1}{2}}} \right)^{- \frac{1 + i \delta}{p-1}} \right\|_{L^\infty}    \leq \frac{C}{1 +  |\ln(T-t)|^{\frac{1}{4}} },
   \end{eqnarray*}
   with
   $$ b_{cri}  =  \frac{(p-1)^2}{ 8 \sqrt{p(p+1)}}, \mu  = \frac{\delta}{8 b }.$$

\bigskip
\noindent
As for  equation \eqref{equation-problem-p=2},  there are many works  done in dimension one, such as the work of   Guo, Ninomiya, Shimojo and  Yanagida,  who   proved in \cite{GNYTAMS2013}  the following results (see Theorems 1.2, 1.3 and 1.5 in this work):

\medskip

$(i)$  \noindent \textit{ (A Fourier- based  blowup  crieterion). We assume that the  Fourier  transform  of  initial data  of \eqref{equation-problem-p=2} is real and positive, then the solution blows up in finite time.}
 
 \medskip
$(ii)$  \noindent \textit{ (A simultaneous blowup  criterion in dimension one)  If the initial data  $u^0  =  u_1^0  + u_2^0,$ satisfies 
 $$ u_1^0 \text{ is even },  u_2^0  \text{ is odd  with } u_2^0 > 0 \text{ for } x > 0.$$  
 Then,  the fact that the   blowup set is compact  implies that $u_1^0, u_2^0$ blow up simultaneously. }

 \medskip
 $(iii)$   \noindent \textit{  Assume that $u_0 = u_1^0 + i u_2^0$ satisfy
\begin{eqnarray*}
u_1^0, u_2 ^0 \in C^1 (\mathbb{R}^n), 0 \leq u_1^0  \leq 	M,  u_1^0 \neq  M, 0 < u_2^0 \leq L, 
\end{eqnarray*}
 \begin{eqnarray*}
 \lim_{|x| \to + \infty} u_1^0 (x)  = M      \text{ and } \lim_{|x| \to +\infty} u_2^0 = 0,
 \end{eqnarray*}
 for some  constant  $L, M $. Then, the solution $u= u_1 + i u_2$ of \eqref{equation-problem-p=2}, with initial data $u^0$, blows up  at time $T (M),$ with $u_2 (t) \not\equiv 0$ . Moreover,  the real part  $u_1(t)$ blows up only at  space  infinity  and  $u_2 (t)$  remains   bounded. }
 
 \medskip
 
\noindent   Still for equation   \eqref{equation-problem-p=2},    Nouaili and Zaag constructed  in \cite{NZCPDE2015}    a complex solution $ u = u_1+ i u_2 ,$  which blows up in finite time $T$  only at the  origin. Moreover, the solution  satisfies the following asymptotic behavior
$$\left\| (T -t) u (.,t)  - f \left( \frac{.}{ \sqrt{ (T - t) |\ln (T -t)|}}\right)  \right\|_{L^{\infty}}  \to 0  \text{ as } t \to T,$$
where     $f(z) =  \frac{1}{ 8  + |z|^2}$   and  the imaginary part satisfies the following astimate   for all $ K > 0$
\begin{equation}\label{asymptotic-imiginary-nouaili-zaag}
\sup_{|x| \leq K \sqrt{T -t}}  \left| (T -t) \tilde v (x, t)  - \frac{1}{|\ln (T -t)|^2} \sum_{j=1}^n C_j \left( \frac{x_j^2}{T - t}  - 2\right)  \right| \leq \frac{C(K)}{ | \ln (T - t)|^\alpha},
\end{equation}
for some   $(C_i)_i \neq (0,...,0)$ and $ 2 < \alpha < 2 + \eta, \eta $ small enough. Note that the real and the imaginary parts blow up simultaneously  at the origin. Note also that \cite{NZCPDE2015} leaves unanswered the question of the derivation of  the profile of the imaginary  part, and this is precisely  our aim in this paper, not only for equation \eqref{equation-problem-p=2}, but also for equation  \eqref{equ:problem} with  $ p \in \mathbb{N},  p \geq 2$. 

\medskip
\noindent Before stating our result (see Theorem \ref{Theorem-profile-complex}  below), we would  like to mention some classification results by Harada  for blowup solutions of \eqref{equation-problem-p=2}. As a matter of fact, in \cite{HaradaJFA2016}, he  classified all blowup solutions of \eqref{equation-problem-p=2} in dimension one,  under some reasonable assumption (see  \eqref{condition-Harada}, \eqref{harada-imaginary-condi}),    as follows (see Theorems 1.4, 1.5 and 1.6 in that work): 

\medskip
\noindent \textit{  Consider   $u =  u_1 + i u_2 $  a   blowup solution  of \eqref{equation-problem-p=2}   in one dimension space  with blowup time $T$ and blowup  point $\xi$  which satisfies 
\begin{equation}\label{condition-Harada}
 \sup_{0< t < T} (T-t) \| u(t)\|_{L^\infty} < +\infty.
\end{equation}
Assume  in addition that
\begin{equation}\label{harada-imaginary-condi}
 \lim_{s \to +\infty} \| w_2(s)  \|_{L^2_\rho(\mathbb{R})}   = 0, w_2 \not \equiv 0, 
\end{equation}
where  $\rho$ is defined  as follows 
 \begin{equation}\label{defi-rho-n=1}
 \rho (y) = \frac{e^{-\frac{y^2}{4}}}{\sqrt{4 \pi}},
 \end{equation}
and  $w_2$  is   defined  by the following change of variables (also  called similarity variables):
$$ w_1 (y,s) = (T-t) u_1( \xi + e^{-\frac{s}{2}}y, t ) \text{ and } w_2(y,s) =  (T-t) u_2( \xi + e^{-\frac{s}{2}}y, t ) , \text{ where  } t = T -e^{-s}.$$  
Then, one of the following cases  occurs
 \begin{eqnarray*}
 &(C_1)& \left\{  \begin{array}{rcl}
 w_1  &=&  1 - \frac{c_0}{s} h_2 + O (\frac{\ln s}{s^2} )  \text{ in } L^2_{\rho} (\mathbb{R}),\\[0.3cm]
 w_2 &= &c_2 s^{- m} e^{- \frac{(m-2)s}{2}} h_m + O \left(  s^{-(m+1)} e^{- \frac{(m-2)s}{2}} \ln s\right) \text{ in }  L^2_\rho(\mathbb{R}), m \geq 2.
\end{array}  \right.\\ 
 &(C_2)& \left\{ \begin{array}{rcl}
 u &=&  1 - c_1 e^{- (k-1)s} h_{2k} + O ( e^{- \frac{(2k - 1)s}{2}} )  \text{ in } L^2_{\rho} (\mathbb{R}),\\[0.3cm]
 v &=& c_2 e^{- \frac{(m-2)s}{2}} h_m + O \left(  e^{- \frac{(m-1)s}{2}} \right) \text{ in }  L^2_\rho(\mathbb{R}), k \geq 2, m \geq 2k.
\end{array}  \right. 
 \end{eqnarray*} 
 where $c_0  =  \frac{1}{8}   , c_1 > 0, c_2 \neq 0$ and    $\rho(y)$  is defined in   \eqref{defi-rho-n=1}
 and $h_j(y)$ is  a rescaled version of  the  Hermite polynomial of order $m^{th}$  defined as follows:
\begin{equation}\label{Hermite}
h_m(y)   = \sum_{j=0}^{\left[ \frac{m}{2}\right]} \frac{(-1)^jm! y^{m - 2j}}{j! ( m -2j)!}.
\end{equation}
 . }
 
\medskip 
 \noindent Besides that, Harada has   also  given a profile  to the solutions in similarity variables:
 
 \medskip
 \noindent \textit{There exist $\kappa, \sigma, c > 0$  such that 
 \begin{eqnarray}
 (C_1) &  \Rightarrow &  \left| u - \frac{1}{1 + c_0 s^{-1} h_2 }      \right|   + \left|s^{\frac{m}{2}} e^{\frac{(m-2)s}{2}} v - \frac{c_2 s^{-\frac{m}{2}} h_m}{(1 + c_0 s^{-1}h_2)^2} \right|  < c s^{-\kappa},\label{profile-real-hara}\\
 & \text{ for } & |y| \leq s^{(1 + \sigma)}.\nonumber\\
 (C_2) & \Rightarrow & \left| u - \frac{1}{ 1 +  c_1 e^{- (k-1 ) s } h_{2k} }   \right|  + \left| e^{\frac{(m - 2k)s}{2k} } v - \frac{c_2 e^{- \frac{(k-1)m s}{2k}}h_m}{ (1  + c_1 e^{-(k-1)s}h_{2k})^2}    \right|,\label{profile-imaginary-hara}\\
 &\text{ for } & |y| \leq e^{\frac{(k-1 + \sigma)s}{2k}}.\nonumber
\end{eqnarray}}

\bigskip
\noindent
Furthemore , he also gave the final blowup profiles

\bigskip
\noindent \textit{The blowup profile of $u= u_1 + i u_2$ is given by 
\begin{eqnarray*}
(C_1)  & \Rightarrow & \left\{ \begin{array}{rcl}
u_1(x,T) & = & \frac{2}{c_0} \left(\frac{|\ln |x||}{ x^2} \right) (1  + o (1)),\\[0.3cm]
u_2(x,T) &= & \frac{c_2 }{2^{m-2} (c_0 )^2} \left(\frac{x^{m-4}}{|\ln|x||^{m-2}} \right) (1 + o(1)),
\end{array} \right.\\[0.3cm]
(C_2) & \Rightarrow &  \left\{   \begin{array}{rcl}
u(x, T) &=&  \frac{1 + i c_1}{(c_1 - i c_2) } x^{-2k} (1 + o (1)), \\[0.3cm]
  \text{ if }  m & =   & 2k,\\[0.3cm]
  u_1(x,T) & = & (c_1 )^{-1} x^{-2k} (1 + o(1)) \text{ and }  u_2(x,T) =  \frac{c_2 }{ (c_1 )^2} x^{m - 4k} (1 + o (1)), \text{ if } m > 2k.
\end{array}  \right.
 \end{eqnarray*}}

 Then,  from  the work of   Nouaili and Zaag in \cite{NZCPDE2015}    and  Harada in  \cite{HaradaJFA2016} for  equation \eqref{equation-problem-p=2},   we  derive that the imaginary part $u_2$ also blows up  under some conditions, however, none of them was able to give    a global   profile (i.e. valid uniformly on $\mathbb{R}^n$, and not just on an expanding ball as  in \eqref{profile-real-hara} and \eqref{profile-imaginary-hara})  for the imaginary part. For that reason, our main motivation in this work is to give a sharp description for the profile of the imaginary part.   Our work is considered as an improvement of   Nouaili and Zaag  in \cite{NZCPDE2015} in dimension $n$, which  is valid not only for $p=2$, but also for any $p\geq 3, p \in \mathbb{N}$.  In particular, this   is the  first time we  give   the   profile for   the imiginary  part when  the solution blows up.  More precisely, we have the following Theorem:
\begin{theorem}[Existence of  a  blowup solution for \eqref{equ:problem} and a sharp discription of its profile]\label{Theorem-profile-complex}    For each  $p \geq 2, p  \in  \mathbb{N}$ and $  p_1 \in (0,1) $,  there exists $T_1 (p,p_1)> 0 $ such that for all $T \leq T_1,$ there exist   initial data  $u^0  = u^0_1  + i u^0_2,$   such that  equation \eqref{equ:problem} has  a  unique solution $u (x,t)$ for all $(x,t) \in \mathbb{R}^n \times [0,T)$  satisfying the following: 
\begin{itemize}
\item[$i)$] The solution $u$ blows up in finite time $T$   only at the  origin. Moreover, it  satisfies  the following estimates
\begin{equation}\label{esttima-theorem-profile-complex}
\left\| (T- t)^{\frac{1}{p-1}} u (x,t) - f_0 \left( \frac{x}{\sqrt{(T -t) |\ln(T -t)|}}  \right)  \right\|_{L^{\infty}(\mathbb{R}^n)}  \leq \frac{C}{ \sqrt{|\ln (T -t)|}},
\end{equation}
and
\begin{equation}\label{estima-the-imginary-part}
\left\| (T- t)^{\frac{1}{p-1}} |\ln (T-t)|  u_2(x,t) - g_0 \left( \frac{x}{\sqrt{(T -t) |\ln(T -t)|}}  \right)  \right\|_{L^{\infty}(\mathbb{R}^n)}  \leq \frac{C}{ |\ln (T -t)|^{\frac{p_1}{2}}},
\end{equation}
where $f_0 $ is defined in \eqref{defini-f-0} and  $g_0(z)$ is defined as follows
\begin{eqnarray}
\displaystyle g_0(z) &=& \frac{|z|^2}{\left(p-1  + \frac{(p-1)^2}{4p} |z|^2 \right)^{\frac{p}{p-1}}}\label{defini-g-0-z}.
\end{eqnarray} 
\item[$ii)$] There exists  a complex   function 
 $u^*(x) \in C^{2}(\mathbb{R}^n \backslash \{0\})$ such that
  $u(t) \to u^* = u_1^* + i u_2^*$  as  $t \to T$ uniformly on compact sets of 
  $\mathbb{R}^n \backslash \{0\}$ and  we have the following asymptotic expansions:
\begin{equation}\label{asymp-u-start-near-0-profile-complex}
u^*(x) \sim \left[ \frac{(p-1)^2 |x|^2}{ 8 p |\ln|x||}\right]^{-\frac{1}{p-1}}, \text{ as } x \to 0.
\end{equation}
and  
\begin{equation}\label{asymp-u-start-near-0-profile-complex-imaginary-part}
 u^*_2(x)  \sim \frac{2 p}{(p-1)^2}  \left[ \frac{ (p-1)^2|x|^2}{ 8p |\ln|x||}\right]^{-\frac{1}{p-1}}\frac{1}{ |\ln|x||} , \text{ as } x \to 0.
\end{equation}
\end{itemize}
\end{theorem}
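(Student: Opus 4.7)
The plan is to adapt the two-step Merle--Zaag method (finite dimensional reduction plus an index-theoretic topological argument) to the coupled complex system for $(u_1,u_2)$. I would first pass to similarity variables centered at the suspected blowup point $a=0$ by setting $w(y,s)=(T-t)^{1/(p-1)}u(x,t)$ with $y=x/\sqrt{T-t}$, $s=-\ln(T-t)$, and write $w=w_1+iw_2$. Expanding $u^p=(w_1+iw_2)^p$ with $p\in\N$ via the binomial formula, one obtains a coupled system of the schematic form $\partial_s w_1=\Lc w_1-\frac{w_1}{p-1}+w_1^p+R_1(w_1,w_2)$ and $\partial_s w_2=\Lc w_2-\frac{w_2}{p-1}+pw_1^{p-1}w_2+R_2(w_1,w_2)$, where $\Lc=\Delta-\frac{y}{2}\cdot\nabla$ and the $R_j$ contain only monomials of degree $\geq 2$ in $w_2$. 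The ansatz is then $w_1=\varphi_1+q_1$, $w_2=\varphi_2+q_2$, where $\varphi_1(y,s)=f_0(y/\sqrt{s})$ (plus lower-order corrections, as in Bricmont--Kupiainen and Merle--Zaag) and, crucially, $\varphi_2(y,s)=s^{-1}g_0(y/\sqrt{s})$ with $g_0$ given by \eqref{defini-g-0-z}.

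The central new computation is to identify $g_0$. Plugging the ansatz into the $w_2$-equation, keeping the leading order in $1/s$, and inserting the factor $pw_1^{p-1}\sim pf_0^{p-1}$ forces $g_0$ to solve a first-order transport-type ODE in the variable $z=y/\sqrt{s}$ whose unique solution compatible with polynomial growth at infinity and smoothness at $z=0$ is \eqref{defini-g-0-z}; this fixes the profile of the imaginary part. Once the profiles are fixed, I would linearize: the operator governing $q_1$ is the familiar $\Lc+1-\frac{p}{p-1}+pf_0^{p-1}(\cdot/\sqrt s)$, with spectrum asymptotically $\{1-\frac{m}{2}:m\geq 0\}$ in $L^2_\rho$, producing the standard $(n+1)$-dimensional positive-eigenvalue obstruction (modes $m=0,1$) and a zero-mode of dimension $\binom{n+1}{2}$ (mode $m=2$) handled by logarithmic corrections. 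The operator governing $q_2$ is the same $\Lc-\frac{1}{p-1}+pf_0^{p-1}$ (no change in linear part from the real-part nonlinearity, thanks to the block structure), so its spectrum is again computable; because $\varphi_2$ is already of size $1/s$, the corresponding shrinking set for $q_2$ must be one order smaller, of size $1/s^{(1+p_1)/2}$, to yield \eqref{estima-the-imginary-part}.

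I would then define a shrinking set $\Vc_A(s)$ encoding, component by component, the required decay of the finitely many unstable coordinates (projections onto Hermite polynomials $h_m$ for $m\leq 2$), of the null mode, and of the infinite-dimensional remainders in a weighted $L^2_\rho$-plus-outer-$L^\infty$ norm, for both $q_1$ and $q_2$. The initial data are parametrized by a finite family $(d_{1,m},d_{2,m})_{m\leq 2}\in\R^{2(n+1)+2\binom{n+1}{2}}$ controlling exactly the unstable and null modes of the two components. Through a priori estimates on the linearized semigroup in $L^2_\rho$ (standard for $\Lc$), together with a Duhamel-type control of the nonlinear remainders which are at worst quadratic in $(q_1,q_2)$, I would prove the reduction: if the solution exits $\Vc_A$ at some first time $s_0$, only the unstable and null coordinates may be saturated, and the map sending the initial parameters to these exit coordinates is continuous with nonzero Brouwer degree on the boundary of the parameter set. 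A topological contradiction via the index theorem then yields a choice of parameters for which the trajectory stays in $\Vc_A$ forever, giving \eqref{esttima-theorem-profile-complex}--\eqref{estima-the-imginary-part}.

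The main obstacle I anticipate is the coupling $pw_1^{p-1}w_2$ in the $q_2$-equation: because $\varphi_1$ is of order $1$ and $\varphi_2$ of order $1/s$, even quadratic interactions such as $w_1^{p-2}w_2^2$ feed back into the profile equation for $g_0$ at order $1/s^2$, and one must verify that the formal profile $g_0$ absorbs precisely the resonant contributions while higher harmonics go into the remainder $q_2$ without destabilizing its shrinking set. Once the inner $L^\infty$ estimates in similarity variables are secured, the passage to the final profiles \eqref{asymp-u-start-near-0-profile-complex}--\eqref{asymp-u-start-near-0-profile-complex-imaginary-part} is obtained by the usual matching with outer parabolic-regularity estimates on $u(\cdot,t)$ away from the singular point (following Merle and Zaag), evaluating $f_0$ and $g_0$ along the self-similar variable $z=x/\sqrt{(T-t)|\ln(T-t)|}$ and letting $t\to T$ with $|x|$ chosen so that $T-t\sim |x|^2/|\ln|x||$.
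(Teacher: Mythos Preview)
Your proposal follows the paper's route closely: similarity variables, the ansatz $w_j\approx\Phi_j$ with $\Phi_2(y,s)=s^{-1}g_0(y/\sqrt s)$ plus a constant correction, linearization around $(\Phi_1,\Phi_2)$, a shrinking set with tighter decay rates on $q_2$ than on $q_1$, reduction to finitely many modes, and a Brouwer-degree argument; your treatment of item $(ii)$ via a rescaling around each $x_0\neq 0$ together with a no-blowup lemma is also exactly what the paper does.

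The one substantive point where your sketch is off is the parameter count and the handling of the null modes. You propose to tune modes $m\le 2$ for \emph{both} components, giving $2(n+1)+2\binom{n+1}{2}$ parameters; the paper instead tunes only $m=0,1$ for $q_1$ and $m=0,1,2$ for $q_2$, for a total of $\tfrac{(n+1)(n+4)}{2}$ parameters. This asymmetry is not cosmetic. The ODE $q_{1,j,k}'+\tfrac{2}{s}q_{1,j,k}=O(A/s^3)$ makes the $q_1$ null mode self-improve to $O(A\ln s/s^2)$ inside $V_A(s)$ by the usual mechanism, so it never saturates the boundary and cannot serve as a shooting direction (including it would in fact break the transverse-crossing step of the index argument, since the flow is tangential on that face). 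For $q_2$, however, the shrinking-set bounds are deliberately set at the tighter scale $A^5\ln s/s^{p_1+2}$ in order to reach \eqref{estima-the-imginary-part}, and at that scale the null-mode ODE $q_{2,j,k}'+\tfrac{2}{s}q_{2,j,k}=O(A^2\ln s/s^{p_1+3})$ integrates only to $O(1/s^2)$, which is \emph{larger} than $A^5\ln s/s^{p_1+2}$ for $p_1>0$; hence this mode does not self-improve and the extra parameter $d_{2,2}\in\R^{n(n+1)/2}$ must be added. That additional tuning is precisely the refinement over Nouaili--Zaag that delivers the sharp imaginary-part profile, and it is the main new ingredient you should make explicit.
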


\begin{remark}\label{remark-initial-data}
The initial data  $u^0 $ is given exactly as follows 
$$ u^0  = u_1^0 + i u_2^0 ,$$
where 
\begin{eqnarray*}
u_1^0 & =& T^{ - \frac{1}{p -1}} \left\{  \left( p-1 + \frac{(p-1)^2|x|^2}{ 4 p T |\ln T|}   \right)^{-\frac{1}{p-1}}     + \frac{n \kappa }{2 p |\ln T|} \right.\\
& +& \left. \frac{A}{|\ln T|^2} \left(  d_{1,0} + d_{1,1} \cdot y  \right) \chi_0 \left(\frac{2x}{K \sqrt{T \ln T }} \right) \right\},\\
u_2^0 & =&  T^{ - \frac{1}{p -1}} \left\{ \frac{|x|^2}{ T |\ln T|^2} \left( p-1 + \frac{(p-1)^2|x|^2}{ 4 p T |\ln T|}   \right)^{-\frac{p}{p-1}}     - \frac{2 n \kappa }{(p-1) |\ln T|^2} \right.\\
& +& \left. \left[  \frac{A^2}{|\ln T|^{p_1 +2}} \left(  d_{1,0} + d_{1,1} \cdot y  \right) \chi_0  + \frac{A^5 \ln( |\ln(T)|)}{ |\ln T|^{p_1 + 2}} \left( \frac{1}{2} y^{\mathcal {T} }\cdot  d_{2,2} \cdot 
y  - \text{Tr} (d_{2,2})  \right)  \right]\chi_0\left(\frac{2x}{K \sqrt{T \ln T }} \right) \right\}.
\end{eqnarray*}
with $\kappa = (p-1)^{- \frac{1}{p-1}}$, $K, A$ are positive constants fixed   large enough, $d^{(1)} = (d_{1,0}, d_{1,1}), d^{(2)} = (d_{2,0}, d_{2,1}, d_{2,2}) $   are parametes we fine tune in our proof, and  $\chi_0 \in C^{\infty}_{0}[0,+\infty), \|\chi_0\|_{L^{\infty}} \leq 1,  \text{ supp } \chi_0 \subset [0,2]$. 
\end{remark}
\begin{remark}
We see  below in \eqref{equation-satisfied-u_1-u_2}   that  the equation satisfied  by  of $u_2$  is almost 'linear' in $u_2$. Accordingly,   we may change a little our proof to construct   a solution $u_{c_0} (t)  = u_{1,c_0} + i u_{2,c_0}$ with $t \in [0,T), c_0 \neq 0$, which  blows up in finite time $T$  only at the  origin such that  \eqref{esttima-theorem-profile-complex} and  \eqref{asymp-u-start-near-0-profile-complex}     hold    and also the following 
\begin{equation}
\left\| (T- t)^{\frac{1}{p-1}} |\ln (T-t)|  u_{2,c_0}(x,t) - c_0g_{0}\left( \frac{x}{\sqrt{(T -t) |\ln(T -t)|}}  \right)  \right\|_{L^{\infty}}  \leq \frac{C}{ |\ln (T -t)|^{\frac{p_1}{2}}},
\end{equation}
and 
\begin{equation}
 u^*_2(x)  \sim \frac{2 p c_0}{(p-1)^2}  \left[ \frac{ (p-1)^2|x|^2}{ 8p |\ln|x||}\right]^{-\frac{1}{p-1}}\frac{1}{ |\ln|x||} , \text{ as } x \to 0,
\end{equation}
\end{remark}
\begin{remark} We  deduce from $(ii)$ that $u$  blows up  only at $0$. In particular,  note both the $u_1$ and $u_2$ blow  up.  However, the blowup speed of $u_2$ is   softer  than $u_1$ because of  the quantity  $ \frac{1}{|\ln |x||}$.   
\end{remark}
\begin{remark}
 Nouaili and Zaag constructed a blowup solution of
  \eqref{equation-problem-p=2} with  a less explicit behavior 
  for the imaginary part (see \eqref{asymptotic-imiginary-nouaili-zaag}). Here, we  do better and we obtain the 
  profile the the imaginary part in \eqref{estima-the-imginary-part}  and we also describe the asymptotics of the solution in the neighborhood of the blowup point in \eqref{asymp-u-start-near-0-profile-complex-imaginary-part}. In fact, this refined behavior 
comes from a more involved formal approach (see Section \ref{section-approach-formal} below), and more parameters  to be 
fine tuned in initial data (see Definition \ref{initial-data-profile-complex} where we need more parameters than in Nouaili and Zaag \cite{NZCPDE2015}, namely $d_2 \in \mathbb{R}^{\frac{n(n+1)}{2}}$).   Note also that  our profile estimates  in  \eqref{esttima-theorem-profile-complex} and \eqref{estima-the-imginary-part} are  better than the estimates  \eqref{profile-real-hara} and  \eqref{profile-imaginary-hara}   by Harada ($m = 2$), in the sense that we have a  uniform estimate for whole space $\mathbb{R}^n$, and not just for all $|y| \leq s^{1 + \sigma}$ for some $\sigma >0$. Another point: our result hold in $n$ space dimensions, unlike the work of  Harada in \cite{HaradaJFA2016}, which holds only in one space dimension. 
\end{remark}
\begin{remark}
As in the case $p=2$ treated  by  Nouaili and Zaag \cite{NZCPDE2015},  we suspect this  behavior  in Theorem  \ref{Theorem-profile-complex}  to be unstable. This is due to the fact that the number of parameters in the initial data  we  consider below  in Definition \ref{initial-data-profile-complex} is higher than the dimension of
the blowup parameters which is $n+1$ ($n$ for the blowup points and $1$ for the blowup time).    
\end{remark}

Besides that, we can use the technique of Merle \cite{Mercpam92} 
to construct a solution which blows up at arbitrary given points.
 More precisely, we have  the following Corollary: 
\begin{corollary}[Blowing up at $k$ distinct points]
For any  given points, $x_1,...,x_k$, there exists a solution of \eqref{equ:problem} which blows up exactly at $x_1,...,x_k$. Moreover, the local behavior at each 
blowup point $x_j$ is also given by  \eqref{esttima-theorem-profile-complex}, \eqref{estima-the-imginary-part}, \eqref{asymp-u-start-near-0-profile-complex}, \eqref{asymp-u-start-near-0-profile-complex-imaginary-part}    by replacing $x$ by  $x_j$ and $L^\infty (\mathcal{R}^n)$  by $L^{\infty} (|x - x_j| \leq \epsilon_0),$  for some    $\epsilon_0 >0$.
\end{corollary}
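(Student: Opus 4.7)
The plan is to follow the classical strategy of Merle \cite{Mercpam92}, adapted to the complex-valued setting. The idea is to glue $k$ copies of the single-point blowup solution from Theorem \ref{Theorem-profile-complex}, each centered at one of the prescribed points $x_1, \dots, x_k$, and then to show that the resulting Cauchy problem produces exactly a $k$-point blowup with the desired local profile at every $x_j$.

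First, I would fix a small parameter $T>0$ (to be chosen later, smaller than $T_1(p,p_1)$ and than some threshold depending on the minimum separation $\delta_0 := \tfrac12 \min_{i\neq j}|x_i-x_j|$). Let $u^{0}_T$ be the initial datum given explicitly in Remark \ref{remark-initial-data}, which, by Theorem \ref{Theorem-profile-complex}, produces a solution blowing up at time $T$ only at $0$. By translation invariance of \eqref{equ:problem}, the shifted data $u^{0}_{T,j}(x):=u^{0}_T(x-x_j)$ produces a single-point blowup at $x_j$ at time $T$, with the sharp profile \eqref{esttima-theorem-profile-complex}--\eqref{asymp-u-start-near-0-profile-complex-imaginary-part} centered at $x_j$. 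Because the localization parameter $K\sqrt{T|\ln T|}$ in Remark \ref{remark-initial-data} is much smaller than $\delta_0$ once $T$ is small, the supports of the ``blowup bumps'' $u^{0}_{T,j}-0$ (modulo their tails) are pairwise essentially disjoint. Choose cutoff functions $\chi_j\in C^\infty_c(\mathbb{R}^n)$ with $\chi_j \equiv 1$ on $B(x_j,\delta_0/2)$ and $\operatorname{supp}\chi_j\subset B(x_j,\delta_0)$, and define
\begin{equation*}
v^0(x) \;=\; \sum_{j=1}^{k} \chi_j(x)\, u^{0}_{T,j}(x).
\end{equation*}

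Next, following Merle's argument, I would apply a reduction/topological procedure parallel to the one used for Theorem \ref{Theorem-profile-complex}, but now with $k$ independent sets of finite-dimensional parameters $(d^{(1)}_j, d^{(2)}_j)_{j=1}^{k}$, one at each $x_j$. For each $j$, I perform the similarity-variable change around $(x_j,T)$ and decompose the rescaled solution $w^{(j)}$ into the profile plus the negative, null and positive modes of the linearized operator $\mathcal{L}$ used in the main construction. The analysis away from the blowup points is straightforward since the equation is sub-critical there: the interaction of the $j$-th bump with the $j'$-th bump ($j\neq j'$) contributes only to the outer region of the similarity picture near $x_j$, where the solution is already controlled by $L^\infty$-estimates that are exponentially small in the similarity time $s=-\ln(T-t)$, because the distance between the bumps is $O(1)$ whereas the relevant rescaling is $O(\sqrt{(T-t)|\ln(T-t)|})$. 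Hence these interactions are absorbed as lower order perturbations of the same type already handled in Theorem \ref{Theorem-profile-complex}.

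Then I would run simultaneously, at each point $x_j$, the same reduction-of-the-infinite-dimensional-problem-to-a-finite-dimensional-one argument and the same topological (index/degree) argument recalled in the Merle--Zaag two-step method. The parameters $(d^{(1)}_j,d^{(2)}_j)$ live in a product of finite-dimensional spaces of total dimension equal to $k$ times the dimension used for a single point; a Brouwer-type argument on this product yields parameters for which every $w^{(j)}$ converges to the profile given by $f_0$ and $g_0$, in exactly the sense of \eqref{esttima-theorem-profile-complex}--\eqref{estima-the-imginary-part}. The final profile estimates \eqref{asymp-u-start-near-0-profile-complex}--\eqref{asymp-u-start-near-0-profile-complex-imaginary-part} at each $x_j$ follow from the passage to the final time, identical to $(ii)$ of the theorem. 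Finally, standard parabolic regularity on the complementary region $\mathbb{R}^n\setminus\bigcup_{j}B(x_j,\delta_0/2)$ ensures that the solution stays bounded there up to time $T$, so no other blowup point arises.

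The main obstacle I anticipate is making rigorous the decoupling between the $k$ near-bump regions. Concretely, one must check that when performing the similarity variable change centered at $x_j$, the contribution of the neighboring bumps appears only as an exponentially small source in the equation for $w^{(j)}$, small enough to be absorbed into the quadratic error terms of the linearized shrinking set introduced to encode the reduction. This amounts to verifying smallness in the weighted $L^2_\rho$ norm with Gaussian weight centered at $x_j$, exploiting that $\rho(y)$ decays like $e^{-|y|^2/4}$ and that the distance between bumps in the $y$-variable is of order $e^{s/2}\delta_0\to\infty$. Once this localization lemma is in place, the rest of the argument is a straightforward parallelization of the proof of Theorem \ref{Theorem-profile-complex}.
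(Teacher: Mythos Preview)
Your sketch follows exactly the strategy the paper itself indicates: the paper does not give a proof of this corollary at all, but simply writes ``we can use the technique of Merle \cite{Mercpam92}'' and states the result. Your outline (glue $k$ translated copies of the single-point initial datum via cutoffs, run the finite-dimensional reduction and topological argument independently around each $x_j$ with a product of parameter sets, and absorb the cross-bump interactions as exponentially small errors in similarity variables) is the standard implementation of Merle's localization procedure and is in line with what the paper intends.
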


This paper is organized as follows: 

- In Section   \ref{section-approach-formal}, we adopt  a formal approach to  show how the profiles we have in Theorem \ref{Theorem-profile-complex} appear naturally.

- In Section  \ref{section-exisence solu}, we give the rigorous  proof for Theorem \ref{Theorem-profile-complex}, assuming some technical estimates.

- In Section  \ref{the proof of proposion-reduction-finite-dimensional}, we  prove the techical estimates assumed in Section  \ref{section-exisence solu}.

\medskip
\textbf{Acknowledgement:}  I   would like to send a huge thank    to  Professor Hatem ZAAG,  my PhD advisor at Paris 13.    He led  my  first steps of the study.   Not only did he introduced me to the subject, he also gave me valuable indications on the reductions of a mathematics paper. I have no anymore words to describe  this wondeful. Beside that, I also thank  my family who encouraged me  in my mathematical stidies.
\section{Derivation of the profile (formal approach)}\label{section-approach-formal}
In this section, we aim at giveing  a   formal  approach to our problem which helps us   to explain how we derive  the profile of solution of \eqref{equ:problem} given  in Theorem \eqref{Theorem-profile-complex}, as well the   asymptotics of the solution.
\subsection{Modeling  the problem}\label{subsection-pro-L}
In this part, we will give definitions and special symbols important for    our work and  explain how the  functions  $f_0, g_0$ arise as blowup profiles for equation \eqref{equ:problem} as stated in \eqref{esttima-theorem-profile-complex} and \eqref{estima-the-imginary-part}.      Our aim in this section is to give solid (though formal) hints for the existence of a solution $u(t) = u_1 (t)  + i u_2(t)$ to equation  \eqref{equ:problem}  such that 
\begin{equation}\label{lim-u-t-=+-infty}
\lim_{t \to T} \|u(t) \|_{L^\infty} = +\infty,
\end{equation} 
and  $u$ obeys the profiles in \eqref{esttima-theorem-profile-complex} and  \eqref{estima-the-imginary-part}, for some $T > 0$. By using equation  \eqref{equ:problem}, we deduce that $u_1,u_2$ solve: 
\begin{equation}\label{equation-satisfied-u_1-u_2}
\left\{   \begin{array}{rcl}
\partial_t u_1 &=& \Delta u_1 + F_1(u_1,u_2),\\
\partial_t u_2 &=& \Delta u_2 + F_2(u_1, u_2). 
\end{array} \right.
\end{equation}
where 
\begin{equation}\label{defi-mathbb-A-1-2}
\left\{   \begin{array}{rcl}
F_1(u_1,u_2) &=& \text{ Re} \left[( u_1 + i u_2)^p\right]  = \sum_{j=0}^{\left[ \frac{p}{2} \right]}  C^{2j }_p ( -1)^{j} u_1^{p-2j} u_2^{2j}  ,\\
 F_2(u_1, u_2) &=&  \text{ Im} \left[( u_1 + i u_2)^p\right]  = \sum_{j=0}^{\left[ \frac{p-1}{2} \right]}  C^{2j + 1}_p ( -1)^{j} u_1^{p-2j -1} u_2^{2j +1},  
\end{array} \right.
\end{equation}
with   $\text{ Re}[ z] $  and $\text{ Im} [z]$  being  respectively  the real  and the imaginary part  of $z$ and  $C^m_{p} = \frac{p!}{ m ! (p-m)!} ,$ for all $m \leq p .$  Let us introduce \textit{the similarity-variables}: 
\begin{equation}\label{similarity-variales}
w_1 (y,s)  =  (T-t)^{\frac{1}{p-1}}u_1 (x,t), w_2 (y,s)  =  (T-t)^{\frac{1}{p-1}}u_2 (x,t) ,y = \frac{x}{\sqrt {T- t}} , s = - \ln(T-  t).
\end{equation}
 Thanks to  \eqref{equation-satisfied-u_1-u_2},   we  derive the system satisfied by    $( w_1, w_2),$  for all $y \in \mathbb{R}^n$ and $s \geq  -\ln T $ as follows:
\begin{equation}\label{equation-satisfied-by-w-1-2}
\left\{  \begin{array}{rcl}
\partial_s w_1  & = &   \Delta w_1 - \frac{1}{2} y \cdot \nabla w_1  - \frac{w_1}{p-1}   +  F_1(w_1,w_2), \\
\partial_s w_2 &=& \Delta w_2 - \frac{1}{2} y \cdot \nabla w_2  - \frac{w_2}{p-1} + F_2 (w_1,w_2). 
\end{array} \right.
\end{equation}
Then   note that  studying  the    asymptotics  of  $u_1 + i u_2$ as  $t \to T$ is    equivalent to  studying the  asymptotics of $w_1 + i w_2$  in long time.   We  are first interested in the set of constant solutions  of 
 \eqref{equation-satisfied-by-w-1-2}, denoted by 
 $$\mathcal{S} = \left\{  (0,0)\right\}   \cup  \left\{  \left(\kappa \cos\left(\frac{2 k \pi}{p-1} \right) ,\kappa \sin\left(\frac{2 k \pi}{p-1} \right) \right) \text{ where } \kappa 
 = (p-1)^{-\frac{1}{p-1}}, k  = 0,..., p-1 \right\}.$$ 
 With the transformation  \eqref{similarity-variales}, we slightly precise  our goal in \eqref{lim-u-t-=+-infty} by requiring   in addition that
 $$ (w_1, w_2) \to (\kappa, 0)  \text{  as }  s  \to +\infty.$$
Introducing  $w_1 = \kappa + \bar w_1, $ our goal because to get 
$$ (\bar w_1, w_2) \to (0,0) \text{ as } s \to + \infty. $$
From \eqref{equation-satisfied-by-w-1-2}, we deduce  that $\bar w_1, w_2$  satisfy the following system

\begin{equation}\label{system-bar R-varphi}
\left\{   \begin{array}{rcl}
\partial_s \bar w_1 &=& \mathcal{L} \bar w_1  + \bar B_1(\bar w_1, w_2),\\
\partial_s w_2 &=& \mathcal{L} w_2  + \bar B_2 (\bar w_1, w_2).
\end{array} \right.
\end{equation}
where
\begin{eqnarray}
\mathcal{L} &=& \Delta - \frac{1}{2} y \cdot \nabla + Id,\label{define-operator-L}\\
\bar B_1 ( \bar w_1, w_2 )  & = & F_1(\kappa + \bar w_1, w_2)- \kappa^p - \frac{p}{p-1} \bar w_1 , \label{defini-bar-B-1}\\ 
\bar B_2 (\bar w_1, w_2) &=& F_2(\kappa + \bar w_1, w_2) - \frac{p}{p-1} w_2.\label{defini-bar-B-2}
\end{eqnarray}
It is  important to study  the linear operator $\mathcal{L}$  and the asymptotics of $\bar B_1, \bar B_2$ as  $(\bar w_1, w_2) \to (0,0)$ which will appear as quadratic. 

$\bullet $ \textit{ The properties of $\mathcal{L}$:}

We observe that the   operator $\mathcal{L}$  plays an important 
role in our analysis. It is not really difficult to find an analysis space such that $\mathcal{L}$ is
 self-adjoint. Indeed,  $\mathcal{L} $  is self-adjoint in $L^2_\rho(\mathbb{R}^n)$, where $L^2_\rho$ is the weighted space associated with the weight $\rho$ defined by
\begin{equation}\label{def-rho-rho-j}
\rho(y)   =\frac{e^{- \frac{|y|^2}{4}}}{(4 \pi)^{\frac{n}{2}}}  = \prod_{j=1}^n \rho_j (y_j), \text{ with } \rho_j(y_j) = \frac{e^{- \frac{|y_j|^2}{4}}}{(4 \pi)^{\frac{1}{2}}},
\end{equation}
and  the spectrum set of $\mathcal{L}$
$$\textup{spec}(\mathcal{L}) = \displaystyle \left\{1 - \frac m2, m \in \mathbb{N}\right\}.$$
Moreover,  we can find  eigenfunctions which  correspond  to each  eigenvalue $ 1 - \frac{m}{2}, m \in \mathbb{N}$: 
\begin{itemize}
\item[-] The  one space dimensional case:  the eigenfunction corresponding to
 the eigenvalue $1 - \frac m2$   is  $h_m$, the rescaled Hermite polynomial given in   \eqref{Hermite}. In particular, we have the following orthogonality property:
 $$\int_{\mathbb{R}} h_i h_j  \rho dy  =  i! 2^i \delta_{i,j}, \quad \forall (i,j) \in \mathbb{N}^2. $$
\item[-] The higher dimensional case:  $n \geq 2$, the eigenspace  $\mathcal{E}_{m}$, corresponding
 to the eigenvalue $1 - \frac {m}{2}$  is defined as follows:
\begin{equation}\label{eigenspace}
 \mathcal{E}_m   = \left\{ h_{\beta} = h_{\beta_1} \cdots h_{\beta_n}, \text{ for all } \beta \in \mathbb{N}^n, |\beta|  = m , |\beta| = \beta_1 + \cdots +\beta_n  \right\}.
\end{equation}
\end{itemize}
As a matter of fact, so we can represent  an arbitrary function $r \in  L^{2}_\rho$ as follows
\begin{eqnarray*}
r  &=& \displaystyle \sum_{\beta, \beta \in \mathbb{N}^n} r_\beta h_\beta  (y),\\
\end{eqnarray*}
where: $r_\beta$ is the projection of $r$ on $h_\beta $ for any $ \beta \in \mathbb{R}^n$ which is defined as follows:
\begin{equation}\label{projector-h-beta}
r_\beta =  \mathbb{P}_\beta (r)  =  \int r k_\beta \rho dy, \forall  \beta \in \mathbb{N}^n, 
 \end{equation}
with 
\begin{equation}\label{note-k-beta-hermite}
 k_\beta (y) = \frac{ h_\beta}{\|h_\beta\|^2_{L^2_\rho}},
\end{equation}
$\bullet $ \textit{ The  asymptotic of $\bar B_1(\bar w_1, w_2), \bar B_2(\bar w_1, w_2)$:}
The following asymptotics   hold: 
 \begin{eqnarray} 
\bar B_1 (\bar w_1, w_2) &=& \frac{p}{2 \kappa} \bar w_1^2  + O (|\bar w_1|^3 + |w_2|^2),\label{asymptotic-bar-B-1-1}\\
\bar B_2( \bar w_1, w_2) &=& \frac{p}{ \kappa} \bar w_1 w_2  + O \left( |\bar w_1|^2 |w_2| \right) + O  \left( |w_2|^3 \right), \label{asymptotic-bar-B-2-1}                                 
\end{eqnarray}
as  $(\bar w_1, w_2) \to (0,0)$   (see Lemma \ref{asymptotic-bar-B-1-2} below).
\subsection{Inner expansion}\label{subsection-inner-expan}
 In this part, we study    the asymptotics  of the solution 
 in $L^2_\rho(\mathbb{R}^n).$  Moreover, for simplicity  we suppose that  $n =1$, and we recall that  we aim at  constructing a  solution of
  \eqref{system-bar R-varphi} such that 
  $( \bar w_1,  w_2)  \to (0,0) $.   Note first that  the spectrum  of $\mathcal{L}$  contains  two   positive eigenvalues $1, \frac{1}{2}$, a   neutral  eigenvalue $0$ and all  the other ones are  strictly negative. So, in the representation of the solution in $L^2_\rho,$  it is reasonable to think that    the part corresponding to the negative spectrum  is easily  controlled.  Imposing  a symmetry condition  on the solution   with  respect  of $y$,   it is reasonable to look for  a  solution  $ \bar w_1, w_2  $ of the form:
\begin{eqnarray*}
\bar w_1 & = & \bar w_{1,0} h_0  + \bar w_{1,2} h_2, \\
w_2 &=& w_{2,0} h_0 + w_{2,2} h_2. 
\end{eqnarray*}
From the assumption that   $ (\bar w_1, w_2)  \to (0,0)$,  we see that    $ \bar w_{1,0}, \bar w_{1,2}, w_{2,0} , w_{2,2} \to 0$ as $s \to + \infty$. 
We  see also  that we can understand the asymptotics of the solution $\bar w_1, w_2$ in $L^2_\rho$  from  the study of the asymptotics of $\bar w_{1,0}, \bar w_{1,2}, w_{2,0}, w_{2,2}.$
 We  now  project  equations   \eqref{system-bar R-varphi} on $h_0$ and
  $h_2.$ Using   the asymptotics of $\bar B_1, \bar  B_2$  in  \eqref{asymptotic-bar-B-1-1} and \eqref{asymptotic-bar-B-2-1},  we get the following ODEs for $ \bar w_{1,0}, \bar w_{1,2}, w_{2,0} , w_{2,2}:$
\begin{eqnarray}
\partial_s \bar w_{1,0} &=& \bar w_{1,0} + \frac{p}{2 \kappa} \left( \bar w_{1,0}^2 + 8 \bar w_{1,2}^2\right)+ O (|\bar w_{1,0}|^3 + |\bar w_{1,2}|^3) + O( |w_{2,0}|^2 + |w_{2,2}|^2),\label{ODe-bar w_1-0} \\
\partial_s \bar w_{1,2} &=& \frac{p}{\kappa} \left( \bar w_{1,0} \bar w_{1,2} + 4 \bar w_{1,2}^2\right) + O (|\bar w_{1,0}|^3 + |\bar w_{1,2}|^3) + O( |w_{2,0}|^2 + |w_{2,2}|^2) ,\label{ODe-bar w_1-2}\\
\partial_s w_{2,0} &=&  w_{2,0} +  \frac{p}{\kappa}\left[\bar w_{1,0} w_{2,0} + 8 \bar w_{1,2} w_{2,2} \right] + O ((|\bar w_{1,0}|^2 + |\bar w_{1,2}|^2)(|w_{2,0}|+ |w_{2,2}|)) \label{ODe- w_2-0}\\
&+& O( |w_{2,0}|^3 + |w_{2,2}|^3) ,\nonumber\\
\partial_s w_{2,2} &=&   \frac{p}{\kappa} \left[  \bar w_{1,0} w_{2,2} + \bar w_{1,2} w_{2,0} + 8 \bar w_{1,2} w_{2,2}\right] + O ((|\bar w_{1,0}|^2 + |\bar w_{1,2}|^2)(|w_{2,0}|+ |w_{2,2}|)) \label{ODe- w_2-2}\\
&+& O( |w_{2,0}|^3 + |w_{2,2}|^3).   \nonumber
\end{eqnarray}
Assuming that 
\begin{equation}\label{asumption-bar-w-1-0-lesthan-bar-w-1-2}
\bar w_{1,0}, w_{2,0}, w_{2,2} \ll \bar w_{1,2}  \text{ as } s \to + \infty,
\end{equation}
we may simplify the ODE system as follows:

$  \bullet $ \textit{The asymptotics of $\bar w_{1,2}$:}

We deduce from  \eqref{ODe-bar w_1-2} and \eqref{asumption-bar-w-1-0-lesthan-bar-w-1-2} that
$$\partial_s \bar w_{1,2}   \sim \frac{4 p}{\kappa} \bar w_{1,2}^2 \text{ as } s \to + \infty,$$
which yields
\begin{equation}
\bar w_{1,2}  = -\frac{\kappa}{4 p s} + o\left(\frac{1 }{s} \right), \text{ as } s \to + \infty.
\end{equation}
Assuming futher  that
\begin{equation}\label{asuming-bar-w-1-w-2-2-leq-s^2}
\bar w_{1,0}, w_{2,0}, w_{2,2} \lesssim \frac{1}{s^2}, 
\end{equation}
 we see that 
\begin{equation}\label{asymp-bar-w-1-2}
\bar w_{1,2}  = -\frac{\kappa}{4 p s} + O\left(\frac{\ln s }{s^2} \right), \text{ as } s \to + \infty.
\end{equation}

$\bullet $ \textit{The asymptotics of $\bar w_{1,0}:$}
By using \eqref{ODe-bar w_1-0}, \eqref{asumption-bar-w-1-0-lesthan-bar-w-1-2} and the asymptotics of $\bar w_{1,2}$ in \eqref{asymp-bar-w-1-2},  we see that 
\begin{equation}\label{asymp-bar-w-1-0}
\bar w_{1,0} = O \left( \frac{1}{s^2}\right) \text{ as } s  \to + \infty.
\end{equation}

$\bullet $ \textit{The asymptotics of $w_{2,0}$ and $ w_{2,2}$:}
Bisides that, we derive  from \eqref{ODe- w_2-0}, \eqref{ODe- w_2-2} and  \eqref{asuming-bar-w-1-w-2-2-leq-s^2} that
\begin{eqnarray}
\partial_s w_{2,2} &= & \left( -\frac{2}{s}  + O \left( \frac{\ln s}{s^2}\right) \right)  w_{2,2} + o \left( \frac{1}{s^3}\right),\label{ODE-w-2-2-o-1-s-3}\\
\partial_s w_{2,0} & = & w_{2,0}  + O \left(  \frac{1}{s^3}\right),\nonumber
\end{eqnarray}
which  yields
\begin{eqnarray}
w_{2,2} &=& o \left( \frac{\ln s}{s^2}\right),\nonumber\\
w_{2,0} &=& O \left( \frac{1}{s^3} \right),\label{asymptotic-w-2-0}
\end{eqnarray}
as $s \to + \infty$. This also  yields a  new ODE for  $ w_{2,2}:$
$$\partial_s w_{2,2} =  - \frac{2}{s} w_{2,2} + o\left(  \frac{\ln^2 s}{s^4} \right),$$
which implies 
$$ w_{2,2}  = O\left(\frac{1}{s^2} \right).$$
  Using again \eqref{ODE-w-2-2-o-1-s-3}, we    derive a new ODE for $w_{2,2}$
$$\partial_s w_{2,2} =  - \frac{2}{s} w_{2,2} + O\left(  \frac{\ln s}{s^4} \right),$$
which yields 
\begin{eqnarray}
w_{2,2} &=&     \frac{\tilde c_0}{s^2} + O\left(  \frac{\ln s}{s^3}\right), \text{  for some  } \tilde c_0 \in \mathbb{R}^* \label{asymptotic-w-2-2}.
\end{eqnarray}
 Noting that    our finding \eqref{asymp-bar-w-1-2}, \eqref{asymp-bar-w-1-0}, \eqref{asymptotic-w-2-0} and \eqref{asymptotic-w-2-2} are consistent with our hypotheses in \eqref{asumption-bar-w-1-0-lesthan-bar-w-1-2} and \eqref{asuming-bar-w-1-w-2-2-leq-s^2}, we get   the asymptotics of the solution $w_1$ and $w_2$ as follows:
\begin{eqnarray}
w_1&=& \kappa - \frac{\kappa}{4p s } ( y^2 - 2) + O\left(\frac{1}{s^2} \right), \label{asymptotic-w-1}\\
w_2  &=& \frac{\tilde c_0}{s^2} (y^2 - 2)  + O \left( \frac{\ln s}{s^3}\right), \label{asymptotic-w-2},
\end{eqnarray}
in   $L^2_\rho(\mathbb{R})$  for some  $\tilde c_0$ in $\mathbb{R}^*$.
Using parabolic  regularity, we note  that the asymptotics \eqref{asymptotic-w-1}, \eqref{asymptotic-w-2}   also hold for all $|y| \leq K,$ where $K$ is an arbitrary positive constant.
\subsection{Outer expansion}
 As Subsection  \ref{subsection-inner-expan} above,  we   assume that  $n=1$.  We   see  that asymptotics \eqref{asymptotic-w-1} and  \eqref{asymptotic-w-2}
 can not give us a shape, since they  hold uniformly  on compact sets, and not in larger sets.  Fortunately,  we observe from  \eqref{asymptotic-w-1} and \eqref{asymptotic-w-2} that the profile may be based on the following variable:    
\begin{equation}\label{the-variable-z-profile}
z  = \frac{y}{\sqrt s}.
\end{equation}
This  motivates us  to look for  solutions  of the form:
\begin{eqnarray*}
w_1(y,s) &=& \sum_{j=0}^{\infty} \frac{R_{1,j} (z)}{s^j}, \\
w_2 (y,s)  &=& \sum_{j=1}^{\infty} \frac{R_{2,j}(z)}{s^j}.
\end{eqnarray*}
Using  system  \eqref{equation-satisfied-by-w-1-2} and  gathering terms  of order $\frac{1}{s^j}$ for $j=0,...,2$,   we obtain 
\begin{eqnarray}
0 &=&  - \frac{1}{2} R_{1,0}' (z) \cdot z - \frac{R_{1,0} (z)}{p-1} + R_{1,0}^p (z), \label{equa-R-1-0} \\
0 &=&  - \frac{1}{2} z R_{1,1}' - \frac{R_{1,1}}{p-1} + p R_{1,0}^{p-1}R_{1,1} + R_{1,0}'' 
 + \frac{z R_{1,0}'}{2}, \label{equa-R-1-1}  \\
0 &=&  - \frac{1}{2}  R_{2,1}' (z) \cdot z - \frac{R_{2,1}}{p-1} + p R_{1,0}^{p-1} R_{2,1}, \label{equa-R-2-1}\\
0 &=&  - \frac{1}{2}  R_{2,2}'(z). z - \frac{R_{2,2}}{p-1} + p R^{p-1}_{1,0} R_{2,2} + R''_{2,1} + R_{2,1} + \frac{1}{2} R_{2,1}' \cdot z + p (p-1) R^{p-2}_{1,0} R_{1,1} R_{2,1}. \label{equa-R-2-2}
\end{eqnarray}
We now solve  the above equations:

$  \bullet $ \textit{ The solution $R_{1,0}$:} It is easy to solve  \eqref{equa-R-1-0}
\begin{equation} \label{solu-R-0}
R_{1,0} (z) = (p-1  + b z^2)^{- \frac{1}{p-1}},
\end{equation}
where $b $ is an  unknown constant that  will be selected accordingly to our purpose.

$  \bullet $ \textit{ The solution $R_{1,1}$:}  We rewrite   \eqref{equa-R-1-1} under  the following form:
$$ \frac{1}{2} z. R_{1,1}'(z) = \left( \frac{(p-1)^2 - bz^2}{(p-1)(p-1 + b z^2)} \right)R_{1,1}  + F_{1,1}(z),$$
where
\begin{eqnarray*}
F_{1,1} (z)  &=&  - \frac{2 b }{p-1} (p-1 + bz^2)^{- \frac{p}{p-1}} + \frac{4 p b^2 z^2}{(p-1)^2} (p-1 + bz^2)^{- \frac{(2p-1)}{p-1}} \\
&  - &  \frac{b z^2}{p-1} (p-1 + b z^2)^{- \frac{p}{p-1}}.
\end{eqnarray*}
Thanks to the variation of constant method, we see that
\begin{eqnarray}\label{variation-constant-R-1-1}
R_{1,1} = H^{-1} (z) \left(  \int  \frac{2 }{z}H (z) F_{1,1} (z) dz  + C_1 \right), 
\end{eqnarray}
where
$$ H (z)   = \frac{ (p-1 +  bz^2)^{\frac{p}{p-1}}}{ z^2}.$$
Besides that, we have:
\begin{eqnarray*}
\frac{2 H}{z} F_{1,1} &=& - \frac{4 b }{ (p-1) z ^3 } + \frac{8p b^2}{(p-1)^2} \left(  \frac{1}{ z (p-1  + b z^2)} \right)  -  \frac{2b}{ (p-1) z}\\
&=& - \frac{4 b }{ (p-1) z^3} + \frac{1}{ z} \left( -\frac{2b}{p-1} + \frac{8p b^2}{(p-1)^3}  \right) \\
&+& (p-1 + b z^2)^{-1} \left( - \frac{8 p b^3 z}{(p-1)^3} \right)\\
\end{eqnarray*}
We can see that  if the coefficient of $\frac{1}{z}$ is non zero, then we will have a $\ln z$ term in the solution $R_{1,1}$ and this term would  not be analytic, creating a singularity in the solution.   In order to avoid this singularity, we impose that 
\begin{equation*}\label{condition-regularity-R-1}
-\frac{2b}{p-1} + \frac{8p b^2}{(p-1)^3} = 0.
\end{equation*}
which yields
\begin{equation}\label{condition-b}
b= \frac{(p-1)^2}{4 p }.
\end{equation}
Besides that,  for simplicity,  we assume that  $C_1 = 0.$ Using   \eqref{variation-constant-R-1-1},   we  see that 
\begin{eqnarray}
R_{1,1} &=& \frac{(p-1)}{ 2 p} (p-1 + bz ^2)^{- \frac{p}{p-1}} -  \frac{p-1}{4 p} z^2 \ln (p-1 + b z^2) (p-1 + b z^2)^{- \frac{p}{p-1}}. \label{solu-R-1-1}
\end{eqnarray}

$  \bullet $ \textit{ The solution $R_{2,1}$:}  It is easy to solve \eqref{equa-R-2-1} as follows:
\begin{equation}\label{solu-varphi_1}
R_{2,1} (z) = \frac{z^2}{(p-1 + bz^2)^{\frac{p}{p-1}}}.
\end{equation}

$  \bullet $ \textit{ The solution $R_{2,2}$:}  We rewrite \eqref{equa-R-2-2} as follows 
\begin{eqnarray*} 
\frac{1}{2} z \cdot R_{2,2}'(z) &=&    \left( \frac{(p-1)^2 - bz^2}{(p-1)(p-1 + b z^2)} \right) R_{2,2} (z) + F_{2,2}(z),
 \end{eqnarray*}
where  
\begin{eqnarray*}
F_{2,2}(z) &=& R''_{2,1} + R_{2,1} + \frac{1}{2} R_{2,1}' \cdot z + p (p-1) R^{p-2}_{1,0} R_{1,1} R_{2,1} \\
&=&  2 (p-1 + b z^2)^{- \frac{p}{p-1}} \\
&-& \frac{10 p b z^2}{p-1} (p-1+ b z^2)^{- \frac{2p-1}{p-1}} + 2 z^2 (p-1 + b z^2)^{-\frac{p}{p-1}} + \frac{(p-1)^2}{2} z^2 (p-1 + b z^2)^{- \frac{3p-2}{p-1}} \\
&+& \frac{4p (2p-1)b^2z^4}{(p-1)^2} (p-1 + b z^2)^{-\frac{3p-2}{p-1}} -\frac{p b z^4}{p-1} (p-1 + b z^2)^{- \frac{2p-1}{p-1}} \\
&- &\frac{(p-1)^2}{4} z^4 \ln (p-1 + b z^2) (p-1+ b z^2)^{- \frac{3p-2}{p-1}}.
\end{eqnarray*}
By using the variation of  constant method, we have
\begin{eqnarray}
R_{2,2} (z) = \frac{z^2}{(p-1+ b z^2)^{-\frac{p}{p-1}}} \left( \int   \frac{2 (p-1+ b z^2)^{-\frac{p}{p-1}}}{z^3}  F_{2,2}(z) dz + C_2 \right),\label{variation-constant-R-2-2}
\end{eqnarray}
where 
\begin{eqnarray*}
\frac{2 (p-1+ b z^2)^{-\frac{p}{p-1}}}{z^3}  F_{2,2}(z) &=& \frac{4}{z^3} + \left[  5 - \frac{20 p b}{(p-1)^2}\right] \frac{1}{z} + \frac{z}{p-1+ b z^2} \left[ \frac{20p b}{(p-1)^2} - b - \frac{2 p b}{p-1} \right] \\
&+& \left[ \frac{8 p (2p-1) b^2}{(p-1)^2}  -  (p-1) p\right] \frac{z}{(p-1+ b z^2)^2} \\
&-& \frac{(p-1)^2}{2} z \ln (p-1 + b z^2) (p-1 + b z^2)^{-2}. 
\end{eqnarray*}
We observe that 
$$ 5 - \frac{20 p b}{(p-1)^2}  = 0, \text{ because  }  b = \frac{(p-1)^2}{4 p }.$$
So, from  \eqref{variation-constant-R-2-2} and  assuming  that $C_2 = 0,$  we have
\begin{equation}\label{solu-R-2-2}
R_{2,2} (z) = - 2 (p-1 + b z^2) ^{- \frac{p}{p-1}} + H_{2,2} (z),
\end{equation}
where
\begin{eqnarray*}
H_{2,2} (z)  &=& C_{2,1} (p) z^2 (p-1 + b z^2)^{-\frac{2p-1}{p-1}} + C_{2,3} (p) z^2 \ln (p-1 + b z^2) (p-1 + b z^2)^{-\frac{p}{p-1}} \\
&+& C_{2,3} (p) z^2 \ln (p-1 + b z^2) (p-1 + b z^2)^{-\frac{2p-1}{p-1}}.
\end{eqnarray*}
\subsection{Matching  asymptotics}
  Since the outer expansion has to match the inner expansion, this will  fiw several constant, giving us     the   following   profiles  for  $w_1$ and $w_2:$
 \begin{equation}\label{equavalent-w-1-2-Phi-1-2}
\left\{   \begin{array}{rcl}
w_1 (y,s) &\sim &  \Phi_1(y,s),\\
w_2 (y,s) &\sim & \Phi_2(y,s),
\end{array} \right.
\end{equation}
 where 
\begin{eqnarray}
\Phi_1(y,s) &=& \left( p-1 + \frac{(p-1)^2}{4 p} \frac{|y|^2}{s} \right)^{-\frac{1}{p-1}} + \frac{n \kappa}{2 p s},\label{defi-Phi-1}\\
\Phi_2 (y,s) &=&\frac{|y|^2}{s^2} \left( p-1 + \frac{(p-1)^2}{4 p} \frac{|y|^2}{s} \right)^{-\frac{p}{p-1}} -  \frac{2n \kappa}{(p-1) s^2},\label{defi-Phi-2}
\end{eqnarray}
for all $(y,s) \in \mathbb{R}^n \times (0,  + \infty)$.
\section{Existence of  a blowup  solution  in Theorem  \ref{Theorem-profile-complex}}\label{section-exisence solu}
In Section \ref{section-approach-formal}, we adopted a formal approach on order to justify how the profiles $f_0, g_0$ arise as blowup profiles for equation \eqref{equ:problem}.  In this section, we give a rigorous  proof to justify the existence  of a solution approaching those profiles.  
\subsection{Formulation of the problem}
 In this section, we aim at formulating  our problem in order  to justify  the  formal  approach which is given in the previous  section. Introducing
 \begin{equation}\label{defini-q-1-2}
 \left\{   \begin{array}{rcl}
 w_1 &=& \Phi_1  + q_1, \\
w_2 &=& \Phi_2  + q_2,
\end{array}  \right.
 \end{equation}
where      $\Phi_1, \Phi_2$ are defined in \eqref{defi-Phi-1} and \eqref{defi-Phi-2} respectively,     then using  \eqref{equation-satisfied-by-w-1-2},  we see that $(q_1, q_2) $ satisfy
\begin{equation}\label{equation-satisfied-by-q-1-2}
\partial_s \binom{q_1}{q_2} = \left( \begin{matrix}
\mathcal{L} + V  & 0 \\
  0 & \mathcal{L} + V
\end{matrix} \right) \binom{q_1}{q_2} +   \left( \begin{matrix}
 V_{1,1}  &  V_{1,2} \\
 V_{2,1 }& V_{2,2}
\end{matrix} \right)  \binom{q_1 }{q_2}  + \binom{B_1}{B_2} \binom{q_1}{q_2} + \binom{R_1 (y,s)}{R_{2} (y,s)}
\end{equation}
where linear operator $\mathcal{L}$ is defined   in \eqref{define-operator-L} and:\\

\medskip
\noindent
 - The potential functions $V, V_{1,1}, V_{1,2}, V_{2,1}, V_{2,2} $ are defined  as follows 
\begin{eqnarray}
V(y,s) &=&  p \left( \Phi_1^{p- 1}  - \frac{1}{p-1}\right)\label{defini-potentian-V}, \\
V_{1,1} (y,s) & = &  \sum_{  j=1}^{ \left[\frac{p}{2}\right]}  C_p^{2j} (-1)^j (p-2j) \Phi_1^{p - 2j -1} \Phi_2^{2j} ,\label{defini-V-1-1} \\
V_{1,2} (y,s) & = &  \sum_{  j=0}^{ \left[\frac{p}{2}\right]}  C_p^{2j} (-1)^j .(2j) \Phi_1^{p - 2j} \Phi_2^{2j - 1}, \label{defini-V-1-2} \\
V_{2,1} (y,s) &  = &     \sum_{  j=0}^{ \left[\frac{p-1}{2}\right]}  C_p^{2j+ 1} (-1)^j (p-2j  -1) \Phi_1^{p - 2j -2} \Phi_2^{2j+ 1}, \label{defini-V-2-1}  \\
V_{2,2} (y,s) & = & \sum_{  j =1 }^{ \left[\frac{p-1}{2}\right]}  C_p^{2j+ 1} (-1)^j (2j + 1) \Phi_1^{p - 2j  -1} \Phi_2^{2j }. \label{defini-V-2-2}
\end{eqnarray}
\medskip
\noindent
 - The quadratic terms $B_1 (q_1, q_2), B_2 (q_1,q_2)$  are defined as follows:
\begin{eqnarray}
B_1 (q_1,q_2) & = &  F_1 \left(  \Phi_1 + q_1, \Phi_2 + q_2 \right) - F_1(\Phi_1, \Phi_2)  -  \sum_{j=0 }^{ \left[\frac{p}{2}\right]}  C_p^{2j} (-1)^j (p-2j) \Phi_1^{p - 2j -1} \Phi_2^{2j}  q_1 \label{defini-quadratic-B-1}\\
&-&  \sum_{  j=0}^{ \left[\frac{p}{2}\right]}  C_p^{2j} (-1)^j .(2j) \Phi_1^{p - 2j} \Phi_2^{2j - 1} q_2,\nonumber\\
B_2(q_1, q_2) & = &   F_2 \left(  \Phi_1 + q_1, \Phi_2 + q_2 \right) - F_2(\Phi_1, \Phi_2) -  \sum_{  j=0}^{ \left[\frac{p-1}{2}\right]}  C_p^{2j+ 1} (-1)^j (p-2j  -1) \Phi_1^{p - 2j -2} \Phi_2^{2j+ 1} q_1  \nonumber\\
&  -  &  \sum_{  j =0 }^{ \left[\frac{p-1}{2}\right]}  C_p^{2j+ 1} (-1)^j (2j + 1) \Phi_1^{p - 2j  -1} \Phi_2^{2j } q_2.\label{defini-term-under-linear-B-2}
\end{eqnarray}

\medskip
\noindent
 -  The rest terms $R_1(y,s), R_2(y,s)$ are defined as follows:
\begin{eqnarray}
R_1 (y,s) &=& \Delta \Phi_1 - \frac{1}{2} y \cdot \nabla \Phi_1 - \frac{\Phi_1}{p-1} + F_1 (\Phi_1, \Phi_2) - \partial_s \Phi_1 , \label{defini-the-rest-term-R-1}\\
R_2 (y,s) &=& \Delta \Phi_2 - \frac{1}{2} y \cdot \nabla \Phi_2  - \frac{\Phi_2}{p-1} + F_2 (\Phi_1, \Phi_2) - \partial_s \Phi_2, \label{defini-the-rest-term-R-2}
\end{eqnarray}
where   $ F_1, F_2$ are defined in  \eqref{defi-mathbb-A-1-2}.

By the  linearization around $\Phi_1, \Phi_2,$  our  problem is reduced to  constructing  a solution $(q_1,q_2)$ of system \eqref{equation-satisfied-by-q-1-2}, satisfying
$$ \|q_1\|_{L^{\infty}(\mathbb{R}^n)} + \|q_2\|_{L^{\infty}(\mathbb{R}^n)} \to 0 \text{ as } s \to +\infty.$$
 Concerning  equation \eqref{equation-satisfied-by-q-1-2}, we recall that we  already know   the properties of  the  linear operator $\mathcal{L}$  (see page \pageref{define-operator-L}).  As  for     potentials $V_{j,k}$  where $  j,k  \in  \{1,2\},$ they  admit the following asymptotics
\begin{eqnarray*}
\sum_{j,k \leq 2}  |V_{j,k} (y,s) | \leq \frac{C}{s}, \forall   y \in \mathbb{R}^n, s\geq 1,
\end{eqnarray*}
(see   Lemma \ref{lemmas-potentials}). Regarding  the terms $B_1,B_2, R_1, R_2$, we see that whenever $|q_1| + |q_2| \leq 2,$ we have 
\begin{eqnarray*} 
|B_1(q_1,q_2)| &\leq & C(q_1^2 + q_2^2),\\
|B_2(q_1,q_2)| &\leq& C  \left( \frac{|q_1|^2}{s} + |q_1 q_2| + |q_2|^2 \right),\\
\|R_1(y,s)\|_{L^\infty(\mathbb{R}^n)} &\leq & \frac{C}{s}, \\
\|R_2 (.,s)\|_{L^\infty(\mathbb{R}^n)}  &\leq &  \frac{C}{s^2},\\
\end{eqnarray*}
(see  Lemmas  \ref{lemma-quadratic-term-B-1-2} and \ref{lemma-rest-term-R-1-2}). In fact, the dynamics of  equation  \eqref{equation-satisfied-by-q-1-2} will mainly depend on the  main  linear operator 
$$ \left(   \begin{matrix}
\mathcal{L} + V  & 0\\
0  & \mathcal{L} + V
\end{matrix}  \right), $$
and  the effects  of the orther terms  will be less important. For that reason, we need to understant  the dynamics of  $\mathcal{L} + V$. Since  the spectral properties  of  $\mathcal{L}$ were already  introduced  in Section \ref{subsection-pro-L}, we will focus here on the effect of $V$. 

$i)$ Effect of $V$ inside the blowup region $\{|y| \leq K\sqrt s\}$ with $K>0$ arbitrary, we have 
$$  V  \to 0  \text{ in  } L^2_\rho(|y| \leq K \sqrt s ) \text{ as } s \to + \infty,$$
which means that the effect of $V$ will be negligeable with  respect of the effect  of $\mathcal{L},$ except perhaps on the null mode of $\mathcal{L}$ (see item $(ii)$  of Proposition  \ref{prop-dynamic-q-1-2-alpha-beta}   below) 

$ii)$ Effect  of  $V$ outside   the blowup region:   for each  $\epsilon > 0,$ there exist $K_{\epsilon} >0$ and $ s_{\epsilon} >0$ such that
$$ \sup_{\frac{y}{\sqrt s} \geq K_{\epsilon}, s \geq s_{\epsilon}} \left| V(y,s)  + \frac{p}{p-1} \right| \leq    \epsilon.$$ 
Since $1$ is the biggest eigenvalue of $\mathcal{L}$, the operator $\mathcal{L}+ V$  behaves as one with with a fully negative spectrum  outside blowup region $\{|y| \geq K_\epsilon\sqrt s\}$, which makes the control of the solution in this region easily.

\medskip
Since the behavior of the potential $V$ inside and outside the blowup region is different,
 we will consider the dynamics of the solution for   $|y| \leq 2K\sqrt s$ and for $|y| \geq K\sqrt s$ separately for some   $K$  to be fixed large.
For  that purpose,     we introduce the following cut-off function
\begin{equation}\label{def-chi}
\chi(y,s)  = \chi_0\left(\frac{|y|}{K \sqrt s} \right),
\end{equation}
where $\chi_0 \in C^{\infty}_{0}[0,+\infty), \|\chi_0\|_{L^{\infty}} \leq 1$ and 
$$
\chi_0(x) = \left\{  \begin{array}{l}
 1 \quad \text{ for } x  \leq 1,\\
 0 \quad   \text{ for }  x  \geq 2,
\end{array} \right.$$
and  $K$ is a positive constant to be fixed large later. Hence,  it is  reason able  to  consider  separately  the solution  in   the blowup region  $\{ |y| \leq 2 K \sqrt s \}$   and  in the  regular region  $\{| y| \geq  K \sqrt s \}$.    More precisely,  let us define the following notation for all functions   $q $ in   $L^\infty$ as follows 
\begin{equation}\label{defini-q-1-1-e}
 q =  q_b  + q_e     \text{ with }  q_b =   \chi  q  \text{ and  }   q_e   = (1 - \chi) q,
\end{equation}
 Note  in particular   that $\text{ supp} (q_b) 	\subset \mathbb{B} ( 0, 2 K \sqrt s)$ and  $  \text{ supp} (q_e) 	     \subset  \mathbb{R}^n \setminus   \mathbb{B} ( 0,  K \sqrt s)$.
Besides that,   we  also expand    $q_b $ in $L^2_\rho$ as follows; according to the spectrum of $\mathcal{L}$  (see Sention \ref{subsection-pro-L} above):
\begin{equation}\label{representation-q-1-L-2-rho}
 q_b (y) =    q_0 + q_1 \cdot y  +  \frac{1}{2} y^{\mathcal{T}}  \cdot  q_2  \cdot y    -  \text{ Tr} \left(  q_2 \right)  + q_- (y) ,
\end{equation}
where  
\begin{eqnarray*}
q_0  &  =  &  \int_{\mathbb{R}^n}  q_b \rho (y) d y, \\
q_1  &=&   \int_{\mathbb{R}^n}  q_b \frac{y}{2}  \rho (y) d y, \\
q_2 & =&    \left (  \int_{\mathbb{R}^n}  q_b  \left(  \frac{1}{4} y_j y_k -  \frac{1}{2} \delta_{j,k} \right)  \rho (y) d y \right)_{1 \leq j,k \leq n},\\
\end{eqnarray*}
and  $\text{ Tr }(q_2)$ is the trace of the matrix $q_2$.
The reader   should  keep  in mind that $q_0, q_1,q_2$ are just  coordinates of $q_b$, not for  $q$.  Note that $q_m$ is  the projection of  $q_b$ as the eigenspace  of $\mathcal{L}$ corresponding to the eigenvalue  $\lambda = 1 - \frac{m}{2}.$ Accordingly, $q_-$ is the projection of $q_b$  on the negative part of the spectrum  of $\mathcal{L}.$ As a consequence of \eqref{defini-q-1-1-e}  and \eqref{representation-q-1-L-2-rho}, we see that every $q \in L^\infty (\mathbb{R}^n)$  can be decomposed into $5$ components as follows:
\begin{equation}\label{decom-5-parts}
q = q_b + q_e  = q_0 + q_1 \cdot y + \frac{1}{2} y^\mathcal{T}  \cdot q_2 \cdot y - \text{Tr} (q_2) + q_- + q_e.
\end{equation} 
\subsection{The shrinking set}
In this part, we  will  construct a shrinking set,    such that  the control  of $(q_1,q_2) \to 0,$  will be a consequence of      the  control  of     $(q_1,q_2)$ in this shrinking set. This is  our definition 
\begin{definition}[The shrinking set]\label{the-rhinking set}
 For all $A \geq  1, p_1 \in (0,1)$ and  $s > 0,$   we introduce the set $V_{p_1,A,} (s) $ denoted for simplicity by $ V_A (s)   $  as the  set of all $(q_1, q_2) \in (L^\infty (\mathbb{R}^n))^2$    satisfying the following conditions: 
\begin{eqnarray*}
|q_{1,0} | \leq \frac{A}{s^2} &\text{ and }&   |q_{2,0}| \leq \frac{A^2}{s^{p_1 + 2}},\\
|q_{1,j} | \leq \frac{A}{s^2} &\text{ and }&   |q_{2,j} | \leq \frac{A^2}{s^{p_1 + 2}}, \forall 1 \leq  j \leq n,\\
|q_{1,j,k} | \leq \frac{A^2 \ln s}{s^2} &\text{ and }&   |q_{2,j,k} | \leq \frac{A^5 \ln s}{s^{p_1 + 2}}, \forall 1 \leq j,k \leq n,\\
\left\| \frac{q_{1,-} }{1 + |y|^3} \right\|_{L^\infty} \leq \frac{A}{s^{2}} &\text{ and }&  \left\| \frac{q_{2,-} }{1 + |y|^{3}} \right\|_{L^\infty} \leq \frac{A^2}{ s^{\frac{p_1 + 5}{2}}},\\
\|q_{1,e} \|_{L^\infty} \leq \frac{A^2}{\sqrt s} & \text{ and } &  \|q_{2,e} \|_{L^\infty} \leq \frac{A^3}{ s^{\frac{p_1 + 2}{2}}},
\end{eqnarray*}
where    $q_1$ and  $q_2$ are decomposed as in \eqref{decom-5-parts} .
\end{definition}
In the following Lemma, we show that belonging to $V_A(s)$  implies the convergence to $0$.  In fact, we have a more precise statement in the following:

\begin{lemma}\label{lemma-estiam-q-1-2-in V-A}
For all $A \geq 1, s \geq 1,$  if we have   $(q_1, q_2) \in V_A (s)$, then the following  estimates  hold:
\begin{itemize}
\item[$(i)$] $\|q_1\|_{L^\infty (\mathbb{R}^n) } \leq \frac{C A^2}{ \sqrt s} \text{ and }  \|q_2\|_{L^\infty(\mathbb{R}^n)} \leq \frac{CA^3}{s^{\frac{p_1 + 2 }{2}}}.$
\item[$(ii)$] 
$$ |q_{1,b} (y) | \leq \frac{CA^2 \ln s}{s^2} (1 + |y|^3), \quad |q_{1,e} (y)| \leq \frac{C A^2}{s^2} (1 + |y|^3) \text{ and } 	 |q_1| \leq  \frac{C A^2 \ln s}{ s^2} (1 + |y|^3),$$
and
$$ |q_{2,b} (y) | \leq \frac{CA^3 }{s^{\frac{p_1 +5}{2}}} (1 + |y|^3), \quad |q_{2,e} (y)| \leq \frac{C A^3}{s^{\frac{p_1 + 5}{2}}} (1 + |y|^3) \text{ and } 	 |q_2| \leq  \frac{C A^3 \ln s}{ s^{\frac{p_1 + 5}{2}}} (1 + |y|^3).$$
\item[$(iii)$]  For all $y \in \mathbb{R}^n$ we have
$$   |q_1| \leq  C  \left[   \frac{A}{s^2}(  1 + |y| )  + \frac{A^2 \ln s}{s^2} (1 + |y|^2)  +  \frac{A^2}{s^2} (1  + |y|^3) \right],$$
and 
$$ |q_2| \leq C \left[   \frac{A^2}{s^{p_1 + 2}} (1 + |y|)  +  \frac{A^5 \ln s}{s^{p_1 + 2}} (1  + |y|^2)    +  \frac{A^3}{ s^{\frac{p_1 + 5}{2}}} (1  + |y|^3)      \right] .$$
\end{itemize}

where  $C$ will henceforth  be an universal constant in our proof which depends only on $K $. 
\end{lemma}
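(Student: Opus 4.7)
The plan is to apply the decomposition \eqref{decom-5-parts} to each of $q_1$ and $q_2$ and bound the five components (constant mode, linear mode, Hermite quadratic mode, negative mode $q_{-}$, and exterior part $q_{e}$) using the defining inequalities of $V_A(s)$. Three elementary inputs suffice: the Hermite polynomials appearing in the decomposition have degree at most two in $y$ and are therefore bounded pointwise by $C(1+|y|^2)$; the support conditions $\mathrm{supp}\, q_{j,b}\subset\{|y|\leq 2K\sqrt s\}$ and $\mathrm{supp}\, q_{j,e}\subset\{|y|\geq K\sqrt s\}$; and the convention (implicit in the whole construction) that $s$ is taken large enough in terms of $A$, which lets one absorb logarithms into small fractional powers of $s$.

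For part $(ii)$ I would argue term by term. The four contributions to $q_{1,b}$ are each bounded by the corresponding coefficient in Definition \ref{the-rhinking set}; since $1$, $|y|$ and $1+|y|^2$ are all dominated by $1+|y|^3$, every contribution is at most $C(A^2\ln s/s^2)(1+|y|^3)$. For the exterior piece, the support bound $|y|\geq K\sqrt s$ yields $(1+|y|^3)\geq K^3 s^{3/2}$, so
\[
|q_{1,e}(y)|\leq \|q_{1,e}\|_{L^\infty}\leq \frac{A^2}{\sqrt s}\leq \frac{C}{K^3}\cdot\frac{A^2}{s^2}(1+|y|^3),
\]
which matches the stated bound. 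The argument for $q_2$ is identical in structure: the $q_{2,-}$ coefficient $A^2/s^{(p_1+5)/2}$ is already of the right form, and the remaining four pieces reduce to comparisons of powers of $A$ and $s$ that all work out under the large-$s$ convention. Part $(iii)$ is proved the same way but keeping the genuine polynomial degree of each mode: constant and linear modes produce the $(1+|y|)$ prefactor, Hermite quadratic modes the $(1+|y|^2)$ prefactor, and both $q_{j,-}$ and $q_{j,e}$ produce the $(1+|y|^3)$ prefactor, the latter via the same $L^\infty$-to-$|y|^3$ conversion on the support $|y|\geq K\sqrt s$.

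For part $(i)$ I would substitute $|y|\leq 2K\sqrt s$ into the pointwise bound of $(iii)$ on $\mathrm{supp}\, q_{j,b}$: for $q_1$ the dominant term comes from the negative mode and gives $CA/\sqrt s$, while the Hermite-quadratic term contributes $CA^2\ln s/s$ which is $\leq CA^2/\sqrt s$ for $s$ large, yielding $\|q_{1,b}\|_{L^\infty}\leq CA^2/\sqrt s$. For $q_2$, the negative mode similarly gives $CA^2/s^{(p_1+2)/2}$ and the Hermite-quadratic term gives $CA^5\ln s/s^{p_1+1}$, which is $\leq CA^3/s^{(p_1+2)/2}$ as soon as $A^2\ln s\leq Cs^{(1-p_1)/2}$, again holding for $s$ large enough in terms of $A$. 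Combining with the direct bounds $\|q_{1,e}\|_{L^\infty}\leq A^2/\sqrt s$ and $\|q_{2,e}\|_{L^\infty}\leq A^3/s^{(p_1+2)/2}$ from $V_A(s)$ finishes part $(i)$. The only real obstacle here is bookkeeping: no analytic input is needed, but one has to track, for each of the five modes and each component, which combination of $A$-powers, $s$-powers, logarithms and polynomial degree in $y$ is produced, and silently invoke the standing large-$s$ convention to kill off the spare logarithms.
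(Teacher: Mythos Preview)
Your approach is essentially identical to the paper's: decompose according to \eqref{decom-5-parts}, bound each of the five modes by the defining inequalities of $V_A(s)$, and convert the exterior $L^\infty$ bound into a weighted one via the support condition $|y|\geq K\sqrt s$. One arithmetic slip: in part $(i)$ for $q_2$, the comparison of the Hermite-quadratic contribution $CA^5\ln s/s^{p_1+1}$ with $CA^3/s^{(p_1+2)/2}$ requires $A^2\ln s\leq Cs^{p_1/2}$ rather than $Cs^{(1-p_1)/2}$; since $p_1\in(0,1)$ both exponents are positive, so the conclusion is unaffected.
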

\begin{proof}
We only prove the estimate for $q_2$ since the estimates for $q_1$ follow similarly and has already been proved in previous papers (see for intance  Proposition 4.7  in  \cite{TZpre15}).  We now take  $A \geq  1, s \geq 1$ and $(q_1, q_2)  \in V_A (s)$ and  $y \in \mathbb{R}^n$.  We also  recall  from \eqref{decom-5-parts} that
$$q_2   = q_{2,b}  + q_{2,e},$$
where  $\text{ supp} (q_{2,b}) 	\subset \mathbb{B} ( 0, 2 K \sqrt s)$ and  $  \text{ supp} (q_{2,e}) 	     \subset  \mathbb{R}^n \setminus   \mathbb{B} ( 0,  K \sqrt s)$.

$(i)$    From   \eqref{representation-q-1-L-2-rho},   we have
$$  q_b = q_{2,0} + q_{2,1} \cdot y + \frac{1}{2} y^\mathcal{T}  \cdot q_{2,2} \cdot y - \text{Tr} (q_{2,2}) + q_{2,-} .$$
Therefore,
\begin{eqnarray}
|q_{2,b} (y)| & \leq & |q_{2,0}|  + |q_{2,1}|  |y| + \max_{j,k \leq  n} |  q_{2,j,k} | (1  + |y|^2)   + \left\| \frac{q_{2,-}}{1 + |y|^3}\right\|_{L^\infty (\mathbb{R}^n)} (1 + |y|^3).\label{modul-q-2-b}
\end{eqnarray}
Then, recalling that  $\text{supp} (q_{2,b})  \subset  \mathbb{B} ( 0, 2 K \sqrt s), $ using Definition \ref{the-rhinking set}, we see that  
$$ |q_{2,b} (y)| \leq  \frac{CA^3}{ s^{\frac{p_1 + 2}{2}}}.$$
 Since we also have
$$ | q_{2,e} | \leq \frac{A^3}{s^{\frac{p_1 + 2}{2}}}.$$
We end-up  with
$$ \|  q_{2}  \|_{L^\infty}  \leq \| q_{2,b}\|_{L^\infty}  + \| q_{2,e}\|_{L^\infty} \leq \frac{C A^3}{  s^{\frac{p_1 + 1}{2}}} .$$

$(ii)$ Using \eqref{modul-q-2-b}  and Definition \ref{the-rhinking set}, we see that
\begin{equation}\label{estima-q-2-b-s-p-1-5}
| q_{2,b} (y) |  \leq  \frac{C A^3  }{ s^{\frac{p_1 + 5 }{2}}}  ( 1 + | y|^3).
\end{equation}
We claim that   $q_{2,e}$ satisfies a similar estimate:
\begin{equation}\label{estima-q-2-e-s-p-1-5}
| q_{2,e} (y)  |  \leq  \frac{C A^3  }{ s^{\frac{p_1 + 5 }{2}}}  ( 1 + | y|^3).
\end{equation}
Indeed, since  $\text{ supp} (q_{2,e}) 	     \subset  \mathbb{R}^n \setminus   \mathbb{B} ( 0,  K \sqrt s),$ we may assume that 
$$   \frac{|y| }{K  \sqrt s}   \geq 1,$$
hence, from Definition \ref{the-rhinking set}, we write 
$$ |q_{2,e}(y)| \leq \frac{A^3}{s^{\frac{p_1  + 2}{2}}} .1 \leq  \frac{A^3}{s^{\frac{p_1 + 2}{2}}}  \frac{|y|^3}{ K^3 s^\frac{3}{2}} \leq  \frac{C A^3 }{s^\frac{p_1  + 5}{2}}   ( 1 + |y|^3),$$ 
and     \eqref{estima-q-2-e-s-p-1-5} follows. Using \eqref{estima-q-2-b-s-p-1-5} and   \eqref{estima-q-2-e-s-p-1-5}, we see that 
$$ |q_2|  \leq | q_{2,b}  |   + |q_{2,e}| \leq \frac{C A^3}{s^\frac{p_1 + 5}{2} } (1 + |y|^3).$$
$(iii)$    It is leaved to reader,  since  this is a direct consequence  of Definition \eqref{the-rhinking set} and  the decomposition  \eqref{decom-5-parts}.
 \end{proof}

\subsection{Initial data}
 Here we    suggest   a class of initial data,  depending on some parameters to be fine-tuned  in order to get a  good solution for our problem.   This is  initial data:
\begin{definition}[The initial data]\label{initial-data-profile-complex} For each
  $A \geq 1, s_0 \geq 1, d_1= (d_{1,0}, d_{1,1}) \in \mathbb{R} \times \mathbb{R}^n,  d_2 = (d_{2,0},d_{2,1}, d_{2,2}) \in \mathbb{R}  \times \mathbb{R}^{  n}  \times \mathbb{R}^{\frac{n(n+1)}{2}}$, we introduce
\begin{eqnarray*}
\phi_{1,A,d_1,s_0} (y) &=& \frac{A}{s_0^2} \left(  d_{1,0} + d_{1,1} \cdot  y \right) \chi (2 y, s_0),\\
  \phi_{2,A, d_2,s_0} (y) &= &\left( \frac{A^2}{s_0^{p_1+2}} \left(   d_{2,0} +  d_{2,1}  \cdot  y \right) + \frac{A^5 \ln s_0}{s^{p_1+2}_0}\left(  y^{\mathcal{T}} \cdot d_{2,2}\cdot y - 2\textup{ Tr}(d_{2,2}) \right) \right)\chi (2 y, s_0).
\end{eqnarray*}
\end{definition}
\textbf{Remark:} Note  that $d_{1,0}$ and $d_{2,0}$ are scalars, $d_{1,1}$ and $d_{2,1}$ are vectors, $d_{2,2}$ is a square  matrix of order $n$.  For simplicity, we may drop down the parameters expect $s_0$ and  write $\phi_1 (y,s_0)$ and $\phi_2 (y,s_0)$.

\medskip
We next claim that we can  find a    domain for   $(d_{1},d_{2})$  so that  initial data   belongs  to    $V_A(s_0):$ 
\begin{lemma}[Control  of initial data in $V_A(s_0)$]\label{lemma-control-initial-data}
There exists $A_1 \geq 1$ such that  for all $ A \geq A_1$, there exists $s_1(A) \geq 1$ such that for all $s_0 \geq s_1(A),$ if   $(q_1,  q_2) (s_0)= \left( \phi_{1},  \phi_{ 2}\right)(s_0) $ where $(\phi_{1},  \phi_{ 2})(s_0)$ are defined in Definition \ref{initial-data-profile-complex},  then,   the following properties hold:
\begin{itemize}
\item[$i)$] There exists a set $\mathcal{D}_{A, s_0}   \in \left[ -2, 2\right]^{ \frac{n^2 + 5n + 4}{2}} $ such that   the mapping
\begin{eqnarray*}
 \Psi_1:  \mathbb{R}^{\frac{n^2 + 5n + 4}{2}} &\to & \mathbb{R}^{\frac{n^2 + 5n + 4}{2}}  \\
(d_1,  d_2) & \mapsto & ( q_{1,0},  (q_{1,j} )_{j \leq n } , q_{2,0},  (q_{2,j})_{j\leq n}, (q_{2,j,k} )_{j,k \leq n})(s_0)
\end{eqnarray*} 
is linear, one to one  from $\mathcal{D}_{A,s_0}$ to $\hat V_A (s_0)$, where
\begin{eqnarray}
\hat V_A(s) = \left[ - \frac{A}{s^2},\frac{A}{s^2} \right]^{1 + n} \times \left[ - \frac{A^2}{s^{p_1 + 2}},\frac{A^2}{s^{p_1 + 2}} \right]^{1 + n} \times \left[ - \frac{A^5 \ln s}{s^{p_1+2}}, \frac{A^5 \ln s}{s^{ p_1+2}}\right]^{\frac{n(n+1)}{2}}.\label{defini-of-hat-V-A}
\end{eqnarray}
 Moreover, 
 \begin{equation}\label{degree-Psi-1}
 \Psi_1 (\partial \mathcal{D}_{A,s_0}) \subset \partial \hat V_A (s_0) \text{ and } \text{deg } ( \Psi_1 \left|_{\partial \mathcal{D}_{A,s_0}} \right.)  \neq 0.
 \end{equation}

\item[$ii)$ ] In particular,  we have $(q_1, q_2) (s_0)  \in V_{A} (s_0),$ and
\begin{eqnarray*}
| q_{1,j,k}(s_0)| &\leq &\frac{A^2 \ln s_0}{2 s_0^2}, \forall j,k \leq n,\\
\left\| \frac{q_{1,-} (.,s_0)}{1 + |y|^{3}} \right\|_{L^\infty} \leq \frac{A}{2s^{2}_0} &\text{ and }&  \left\| \frac{q_{2,-} (.,s_0)}{1 + |y|^{3}} \right\|_{L^\infty} \leq \frac{A^{2} }{2s_0^{\frac{p_1 + 5}{2}}},\\
q_{1,e} (.,s_0) = 0 &\text{ and } & q_{2,e} (.,s_0) = 0.\\
\end{eqnarray*}
\end{itemize}
\end{lemma}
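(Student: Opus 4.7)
The strategy is to show that $\Psi_1$ is a small perturbation of a diagonal rescaling, so that the topological properties in $(i)$ follow from linearity plus a degree-by-homotopy argument, while the bounds in $(ii)$ follow from direct computation of the remaining modes. I would begin by computing the projections of $(\phi_1,\phi_2)(\cdot,s_0)$ on the Hermite eigenfunctions $h_\beta$ of $\mathcal{L}$. The key observation is that $\chi(2y,s_0)=1$ on $|y|\le K\sqrt{s_0}/2$, so that for any polynomial $P$,
\[
\int_{\mathbb{R}^n} P(y)\bigl(1-\chi(2y,s_0)\bigr)\rho(y)\,dy = O(e^{-\alpha s_0})
\]
for some $\alpha>0$. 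The polynomial part of $\phi_1$ lies in $\mathrm{span}(h_0,h_1^{(j)})$, and the polynomial part of $\phi_2$ lies in $\mathrm{span}(h_0,h_1^{(j)},h_\beta : |\beta|=2)$; note that the correction $-2\,\mathrm{Tr}(d_{2,2})$ in $\phi_2$ is chosen precisely so that $y^T d_{2,2}y - 2\,\mathrm{Tr}(d_{2,2})$ is $L^2_\rho$-orthogonal to $h_0$ and thus lies purely in the $|\beta|=2$ eigenspace. The orthogonality relations for $h_\beta$ then yield, up to $O(e^{-\alpha s_0})$ errors,
\begin{align*}
q_{1,0}(s_0) &= \tfrac{A}{s_0^2}\,d_{1,0}, & q_{1,j}(s_0) &= \tfrac{A}{s_0^2}\,d_{1,1,j},\\
q_{2,0}(s_0) &= \tfrac{A^2}{s_0^{p_1+2}}\,d_{2,0}, & q_{2,j}(s_0) &= \tfrac{A^2}{s_0^{p_1+2}}\,d_{2,1,j},\\
q_{2,j,k}(s_0) &= \tfrac{A^5\ln s_0}{s_0^{p_1+2}}\,d_{2,2,j,k},
\end{align*}
with all off-diagonal couplings between distinct coordinates of $(d_1,d_2)$ being $O(e^{-\alpha s_0})$.

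This shows that, in the natural bases, $\Psi_1$ has a matrix that is an exponentially small perturbation of a diagonal scaling, hence is invertible for $s_0$ large. I would then set $\mathcal{D}_{A,s_0}:=\Psi_1^{-1}(\hat V_A(s_0))$; it is a parallelepiped close to $[-1,1]^{(n^2+5n+4)/2}$ and therefore sits inside $[-2,2]^{(n^2+5n+4)/2}$ once $s_0 \geq s_1(A)$. By construction, $\Psi_1$ restricts to a linear bijection from $\mathcal{D}_{A,s_0}$ to $\hat V_A(s_0)$ sending boundary to boundary, and its degree is $\pm 1 \ne 0$ by the homotopy invariance of degree, via the straight-line homotopy to the pure diagonal part (which remains a linear bijection all along). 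This proves $(i)$.

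For $(ii)$, the identities $q_{1,e}(\cdot,s_0)=q_{2,e}(\cdot,s_0)=0$ are immediate since $\phi_1,\phi_2$ are supported in $\{|y|\le K\sqrt{s_0}\}$, where $1-\chi(\cdot,s_0)=0$. The bound on $q_{1,j,k}$ follows because $\phi_1$ is affine in $y$, so its projection on any $h_\beta$ with $|\beta|=2$ reduces to an integral of a polynomial of degree $\le 1$ against $(1-\chi(2y,s_0))\rho$, which is $O(e^{-\alpha s_0})\ll A^2\ln s_0/(2s_0^2)$. For the negative mode, letting $P_2$ denote the polynomial part of $\phi_2$ and using that $\Pi_{\le 2}(P_2)=P_2$, one writes
\[
q_{2,-}(y) = -P_2(y)\bigl(1-\chi(2y,s_0)\bigr) + O(e^{-\alpha s_0})(1+|y|^2),
\]
which is supported in $|y|\ge K\sqrt{s_0}/2$. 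Since $|P_2(y)|\le C(A^5\ln s_0/s_0^{p_1+2})(1+|y|^2)$, dividing by $1+|y|^3$ gains a factor $C/\sqrt{s_0}$ on this support, giving
\[
\Bigl\|\tfrac{q_{2,-}(\cdot,s_0)}{1+|y|^3}\Bigr\|_{L^\infty} \le \frac{CA^5\ln s_0}{s_0^{p_1+5/2}} \le \frac{A^2}{2\,s_0^{(p_1+5)/2}}
\]
for $s_0\ge s_1(A)$ large enough, crucially using $p_1>0$. The bound on $q_{1,-}$ is entirely analogous and even easier (yielding $CA/s_0^{5/2}\le A/(2s_0^2)$). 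The only real difficulty is the systematic bookkeeping of these estimates, matching each exponentially small or polynomially decaying error against the prescribed thresholds of $V_A(s_0)$; there is no conceptual subtlety beyond a careful use of the decomposition \eqref{decom-5-parts} and the Gaussian decay of $\rho$.
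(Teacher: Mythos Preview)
Your approach is correct and is precisely the standard argument used in this line of work. The paper itself omits the proof entirely, referring only to Proposition~4.5 in Tayachi--Zaag \cite{TZpre15} for a similar case, so your write-up is in fact more detailed than what the paper provides. Two minor remarks: first, several of the off-diagonal entries you bound by $O(e^{-\alpha s_0})$ are in fact exactly zero thanks to the radial symmetry of $\chi(2\cdot,s_0)$ and parity, though this changes nothing in the argument; second, your bound $CA/s_0^{5/2}$ for $q_{1,-}$ can be sharpened to $CA/s_0^{3}$ since $(1+|y|)/(1+|y|^3)\le C/|y|^2\le C/s_0$ on the support of $1-\chi(2y,s_0)$, but either bound suffices for the conclusion.
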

\begin{proof}
The proof   is straightforword and a bit length. For that reason, the proof is omitted, and we friendly refer  the  reader to          Proposition 4.5 in  \cite{TZpre15}  for    a quite  similar case.
\end{proof} 
Now, we   give   a key-proposition for our argument. More precisely, in the following proposition, we     prove an existence of a solution of equation \eqref{equation-satisfied-by-q-1-2} trapped in the shrinking set:
\begin{proposition}[Existence of a solution trapped in $V_A(s)$]\label{pro-existence-d-1-d-1}
There exists $A_2 \geq 1 $ such that for all $ A \geq A_2$ there exists $s_2(A) \geq 1$ such that for all $s_0 \geq s_2(A)$, there exists $(d_1,d_2) \in   \mathbb{R}^{ \frac{n^2 + 5n + 4}{2}} $ such that the solution $(q_1, q_2)$ of equation \eqref{equation-satisfied-by-q-1-2} with the initial data at the time $s_0,$ given by $(q_1,q_2)(s_0) = (\phi_1 , \phi_2 )(s_0)$, where $(\phi_1 , \phi_2 )(s_0)$ is defined   in Definition \ref{initial-data-profile-complex}, we have
$$(q_1, q_2)   \in V_A(s), \quad \forall s \in [s_0,+\infty).$$
\end{proposition}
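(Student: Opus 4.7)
The plan is to implement the classical Merle--Zaag two-step strategy: first reduce the infinite dimensional problem of controlling $(q_1,q_2)$ in $V_A(s)$ to a finite dimensional problem involving only the expanding and null modes, then close it by a topological (Brouwer degree) argument based on Lemma~\ref{lemma-control-initial-data}. Concretely, fix $A$ large and $s_0 \geq s_2(A)$ to be chosen, and pick $(d_1,d_2) \in \mathcal{D}_{A,s_0}$. By Lemma~\ref{lemma-control-initial-data}, the initial datum $(\phi_1,\phi_2)(s_0)$ belongs to $V_A(s_0)$, and moreover strictly lies in the interior of $V_A(s_0)$ in the components $q_{i,j,k}$, $q_{i,-}$, $q_{i,e}$. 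Define the exit time
\[
s^*(d_1,d_2) = \sup\{s \geq s_0 : (q_1,q_2)(\cdot,\sigma) \in V_A(\sigma)\ \text{for all}\ \sigma \in [s_0,s]\}.
\]
We argue by contradiction, assuming $s^*(d_1,d_2) < +\infty$ for every $(d_1,d_2) \in \mathcal{D}_{A,s_0}$.

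The first main step, which I expect to be the longest and most delicate, is to derive a priori estimates showing that the solution can only exit $V_A(s)$ through the \emph{expanding} finite dimensional modes $(q_{1,0},q_{1,1},q_{2,0},q_{2,1},q_{2,2})$. To do so, I would project equation~\eqref{equation-satisfied-by-q-1-2} onto the decomposition \eqref{decom-5-parts}, using:
\begin{itemize}
\item the spectral properties of $\mathcal{L}$ recalled in Subsection~\ref{subsection-pro-L} (positive eigenvalues $1$, $\tfrac12$; null eigenvalue $0$ associated with $h_2$-modes; fully negative spectrum below);
\item the smallness of the potentials $V, V_{j,k}$ which contribute only corrections of size $O(1/s)$;
\item the quadratic bounds on $B_1,B_2$ and the $O(s^{-1})$ and $O(s^{-2})$ bounds on $R_1,R_2$ recalled above Lemma~\ref{lemma-estiam-q-1-2-in V-A}.
\end{itemize}
On the non-positive modes ($q_{i,j,k}$, $q_{i,-}$, $q_{i,e}$) one should obtain differential inequalities of the form $y'(s) \leq \lambda(s) y(s) + (\text{source})$ with $\lambda(s) \leq -c/s$ or $\lambda(s) \leq -\delta$, whose Duhamel integration against the corresponding shrinking threshold yields a bound strictly better than the threshold. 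The fact that the initial values for these modes are strictly inside the bounds (as stated in part $ii)$ of Lemma~\ref{lemma-control-initial-data}) is crucial here; the prefactors $A^2$, $A^3$ in the thresholds (versus $A$ linearly in the source) are tuned exactly to absorb universal constants for $A$ large. The most subtle contribution will be the coupling between $q_1$ and $q_2$ through $V_{1,2}$, $V_{2,1}$ and through $B_1,B_2$; but since $q_2$ is at least a power $s^{-p_1/2}$ smaller than $q_1$ in the shrinking set, these cross-terms are manageable. The analysis of $q_{i,e}$ relies on a truncated semigroup estimate in $L^\infty$ exploiting that outside the blow-up region $V \approx -p/(p-1)$, making $\mathcal{L}+V$ strictly dissipative on $L^\infty$.

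The conclusion of this first step is: at the exit time $s=s^*$, the solution $(q_1,q_2)(s^*) \in \partial V_A(s^*)$, but the saturation can only occur in the $(1+n)+(1+n)+\tfrac{n(n+1)}{2} = \tfrac{n^2+5n+4}{2}$ coordinates controlled by $\Psi_1$, i.e.\
\[
\bigl((q_{1,0}),(q_{1,j})_{j\leq n},(q_{2,0}),(q_{2,j})_{j\leq n},(q_{2,j,k})_{j\leq k}\bigr)(s^*) \in \partial \hat V_A(s^*).
\]
Moreover, combining the same a priori bounds with a finer analysis of the ODEs satisfied by these finite dimensional modes, one shows a \emph{transversal crossing} (outgoing flux) property: the vector field points strictly outward at each face of $\partial \hat V_A(s^*)$. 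Standard arguments then imply that $s^* \mapsto s^*(d_1,d_2)$ is continuous and that the renormalized exit map
\[
\Phi: \mathcal{D}_{A,s_0} \longrightarrow \partial\bigl([-1,1]^{(n^2+5n+4)/2}\bigr),\quad (d_1,d_2) \mapsto \tfrac{(q_{1,0},\dots,q_{2,j,k})(s^*)}{\text{threshold at } s^*}
\]
is continuous.

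Finally I close the argument by degree theory. On $\partial \mathcal{D}_{A,s_0}$, Lemma~\ref{lemma-control-initial-data} asserts that already at $s=s_0$ the initial datum sits on $\partial \hat V_A(s_0)$, so transversality forces $s^*=s_0$ there, and $\Phi$ restricted to $\partial \mathcal{D}_{A,s_0}$ coincides (up to the linear isomorphism given by the renormalization) with $\Psi_1\bigr|_{\partial \mathcal{D}_{A,s_0}}$, whose degree is nonzero by \eqref{degree-Psi-1}. Hence $\Phi: \mathcal{D}_{A,s_0} \to \partial [-1,1]^{(n^2+5n+4)/2} \simeq S^{(n^2+5n+2)/2}$ would be a continuous retraction of the ball onto its boundary with nonzero degree, which is impossible. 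Therefore there exists $(d_1,d_2) \in \mathcal{D}_{A,s_0}$ with $s^*(d_1,d_2)=+\infty$, which is the desired solution. The hard part will be the reduction step: establishing that each non-positive mode strictly improves under the flow requires careful bookkeeping of the cross-couplings between the real and imaginary parts and the precise choice of the $A$-powers in the thresholds of Definition~\ref{the-rhinking set}.
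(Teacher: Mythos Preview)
Your proposal is correct and follows essentially the same two-step strategy as the paper: reduction to a finite dimensional problem via a priori estimates on the non-exit modes (stated there as Proposition~\ref{pro-reduction- to-finit-dimensional}, proved in Section~\ref{the proof of proposion-reduction-finite-dimensional}), followed by the topological contradiction using the map from $\mathcal{D}_{A,s_0}$ to $\partial\hat V_A$ and the degree information \eqref{degree-Psi-1}. One small slip: when you list the ``non-positive modes $(q_{i,j,k}, q_{i,-}, q_{i,e})$'' to be controlled, only $q_{1,j,k}$ belongs there---the null mode $q_{2,j,k}$ is part of the finite dimensional exit set, as you correctly record two paragraphs later.
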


 The proof is divided into 2 steps:
\begin{itemize}
\item  The first step: In this step, we    reduce   our problem  to a finite dimensional one.  In  other words,  we aim at proving that the  control of $(q_1,q_2)(s)$ in the shrinking set $V_A (s)$   reduces to  the control of the components 
$$( q_{1,0},  (q_{1,j} )_{j \leq n }  , q_{2,0},  (q_{2,j})_{j\leq n}, (q_{2,j,k} )_{j,k \leq n} )(s)$$ 
 in  $\hat V_A (s).$ 
 \item The second step:  We get the  conclusion of  Proposition \ref{pro-existence-d-1-d-1}   by using   a topological argument   in finite dimension.
\end{itemize}
\begin{proof} We here give proof of Proposition \ref{pro-existence-d-1-d-1}:

\medskip
- \textit{Step 1: Reduction to a finite dimensional problem:}
Using \textit{a priori estimates},  our problem will be reduced to the control of a finite number of  components. 
\begin{proposition}[Reduction to a  finite dimensional problem]\label{pro-reduction- to-finit-dimensional}
There exists $A_3  \geq  1$ such that for all $A  \geq A_3 $, there exists $s_3 (A) \geq 1 $ such that
for all $s_0 \geq s_3 (A).$ The  following holds:
\begin{itemize}
\item[$(a)$]   If $(q_1,q_2)(s)$ a solution 
of equation \eqref{equation-satisfied-by-q-1-2}  with  initial data at the time $s_0$ given by $(q_1,q_2)(s_0) = (\phi_1 , \phi_2 )(s_0)$  defined as  in Definition \ref{initial-data-profile-complex} with  $(d_1,d_2) \in \mathcal{D}_{A,s_0}$  defined  in Lemma   \ref{lemma-control-initial-data}.
\item[$(b)$] If  we  furthemore  assume  that $(q_1,q_2) (s) \in V_A (s)$ for all $   s \in [s_0,s_1]$ for some $s_1 \geq s_0$ and  $(q_1,q_2)(s_1) \in \partial V_A (s_1)$.
\end{itemize}

     Then, we have the following conclusions:
\begin{itemize}
\item[$(i)$] (\textit{Reduction to finite dimensions}): We have $( q_{1,0},  (q_{1,j} )_{j \leq n }  , q_{2,0},  (q_{2,j})_{j\leq n}, (q_{2,j,k} )_{j,k \leq n} )(s_1) \in \partial \hat V_A (s_1) .$
\item[$(ii)$] (\textit{Transverse outgoing crossing}) There exists  $\delta_0 > 0$ such that 
\begin{equation}\label{traverse-outgoing crossing}
\forall \delta \in (0,\delta_0), ( q_{1,0},  (q_{1,j} )_{j \leq n }  , q_{2,0},  (q_{2,j})_{j\leq n}, (q_{2,j,k} )_{j,k \leq n} )(s_1+\delta) \notin \hat V_A (s_1+\delta),
\end{equation}
which implies that $(q_1,q_2)(s_1 + \delta) \notin V_A (s_1 + \delta) $ for all $ \delta \in (0, \delta_0).$
\end{itemize}
\end{proposition}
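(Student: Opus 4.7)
I follow the standard Merle--Zaag reduction philosophy. Project equation \eqref{equation-satisfied-by-q-1-2} onto the spectral decomposition of $\mathcal{L}$ described in Section \ref{subsection-pro-L}, and derive, for each of the ten pieces appearing in the decomposition \eqref{decom-5-parts}, a differential (or integral) inequality for its norm. Under the trapping assumption $(q_1,q_2)(s)\in V_A(s)$ on $[s_0,s_1]$, the quadratic nonlinearities $B_1,B_2$ and the cross potentials $V_{j,k}$ are controlled by Lemma \ref{lemma-estiam-q-1-2-in V-A} together with the pointwise estimates on $V,V_{j,k},R_1,R_2,B_1,B_2$ recalled just after \eqref{defini-the-rest-term-R-2}, while the source terms $R_1,R_2$ contribute only $O(1/s)$ and $O(1/s^2)$ respectively. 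The leading-order contribution on each component thus comes from the principal linear operator $\mathcal{L}+V$, which is essentially diagonal in the chosen decomposition.

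\textbf{Strictly controlled components.} For the negative-spectrum projections $q_{i,-}$, the top eigenvalue of $\mathcal{L}$ on this subspace is $\le -1/2$, which easily dominates the $O(1/s)$ perturbations from $V,V_{j,k}$ and the quadratic couplings; a standard Duhamel-type integral inequality then yields a bound strictly smaller than the one in Definition \ref{the-rhinking set}. For the outer parts $q_{i,e}$, one uses that $V\to -p/(p-1)$ on $\{|y|\ge K\sqrt s\}$ (for $K$ large), so $\mathcal{L}+V$ has a fully negative spectrum there and the outer part decays strictly. Finally, the real neutral mode $q_{1,j,k}$ sees its null eigenvalue shifted by $V$ into an effective decay $-1/s$, consistent with the formal prediction \eqref{asymp-bar-w-1-2}; this dominates the bound $A^2\ln s/s^2$ once $A$ is taken large. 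In each of these three cases, the use of item $ii)$ of Lemma \ref{lemma-control-initial-data} (initial data at half the prescribed size, or identically zero) together with $A$ large guarantees that at $s_1$ the pieces $q_{i,-}$, $q_{i,e}$ and $q_{1,j,k}$ lie in the \emph{interior} of their respective boxes.

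\textbf{Finite-dimensional reduction --- statement (i).} The remaining components $q_{1,0}$, $(q_{1,j})_{j\le n}$, $q_{2,0}$, $(q_{2,j})_{j\le n}$ and $(q_{2,j,k})_{j,k\le n}$ carry the positive eigenvalues $1$ and $1/2$ of $\mathcal{L}$ and --- for $q_{2,j,k}$ --- the null eigenvalue shifted into a mild $-2/s$ by the potential, which is however too weak to preserve the bound $A^5\ln s/s^{p_1+2}$; note that for $p_1\in(0,1)$ this bound is actually \emph{smaller} than $1/s^2$, the natural decay rate predicted by \eqref{asymptotic-w-2-2}, which makes $q_{2,j,k}$ effectively unstable with respect to $V_A$. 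Combining this with the previous step shows that at $s_1$ the boundary of $V_A(s_1)$ can only be reached through $\partial\hat V_A(s_1)$, which is exactly $(i)$.

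\textbf{Transverse crossing --- statement (ii).} Suppose the saturated coordinate is $q_{1,0}$, with $q_{1,0}(s_1)=\varepsilon A/s_1^2$, $\varepsilon\in\{\pm 1\}$. Projecting \eqref{equation-satisfied-by-q-1-2} onto the eigenspace of eigenvalue $1$ gives $\partial_s q_{1,0}=q_{1,0}+\text{rem}(s)$ with $|\text{rem}(s)|\le C(K)/s^2$ and $C(K)$ independent of $A$, so
\[
\varepsilon\,\partial_s\!\left(q_{1,0}(s)-\varepsilon\tfrac{A}{s^2}\right)\bigg|_{s=s_1}\ge \frac{A-C(K)}{s_1^2}+\frac{2A}{s_1^3}>0
\]
as soon as $A>C(K)$, which is the outgoing transversality at this coordinate. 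The coordinates $(q_{i,j})_{j\le n}$ are treated identically with leading factor $1/2$ in place of $1$. For $q_{2,j,k}$, one compares the effective equation $\partial_s q_{2,j,k}=-(2/s)q_{2,j,k}+O(\ln s/s^{p_1+3})$ with the derivative $(1-(p_1+2)\ln s)A^5/s^{p_1+3}$ of the prescribed bound; the strict gap between $-2$ and $-(p_1+2)$ ensured by $p_1<1$ produces the outward drift, proving $(ii)$ in all cases. The main technical obstacle is this last coordinate: since $q_1$ is only $A^2$-small, the quadratic products $q_1 q_2$ entering $B_2$ are dangerously close to the bound $A^5\ln s/s^{p_1+2}$, and it is precisely the largeness of the exponent $A^5$ (rather than $A^2$) in Definition \ref{the-rhinking set} that absorbs these couplings while leaving the linear drift dominant.
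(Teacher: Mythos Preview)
Your proposal is correct and follows essentially the same approach as the paper: a priori ODE/integral estimates on each component of the decomposition \eqref{decom-5-parts} (the paper's Proposition \ref{prop-dynamic-q-1-2-alpha-beta}), strict improvement of the bounds on $q_{1,j,k},\,q_{i,-},\,q_{i,e}$ (the paper's Proposition \ref{control-q(s)-V-A-s-1-2}), and the transverse-crossing computation on the remaining finitely many modes. One minor slip: in your last paragraph the outward drift for $q_{2,j,k}$ comes from the gap $p_1=(p_1+2)-2>0$, i.e.\ from $p_1>0$, not from $p_1<1$.
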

  This proposition  makes the heart of the paper and needs many steps to be proved. For that reason, we dedicate  a whole  section to its proof (Section \ref{the proof of proposion-reduction-finite-dimensional} below). Let us admit it here, and       get to  the conclusion of     Proposition \ref{pro-existence-d-1-d-1} in the second step.
 
 \medskip
\textit{ - Step 2: Conclusion of Proposition \ref{pro-existence-d-1-d-1} by a topological argument.}
In this step, we  finish the proof  of   Proposition \ref{pro-existence-d-1-d-1}. In  fact,   we aim at  proving  the existence of a parameter $(d_1,d_2) \in \mathcal{D}_{A,s_0}$ such that  the solution  $(q_1,q_2)(s)$ of equation \eqref{equation-satisfied-by-q-1-2}   with   initial  data $(q_1 ,q_2 )(s_0) = (\phi_1 , \phi_2 )(s_0),$ exists globally  for all $s \in [s_0, + \infty)$ and satisfies
$$ (q_1, q_2)(s)  \in V_A (s).$$
Our  argument is  analogous  to the   argument of  Merle and Zaag in \cite{MZdm97}. For that reason,   we only give  a brief proof. Let us fix $K, A, s_0$  such that Lemma  \ref{lemma-control-initial-data} and Proposition \ref{pro-reduction- to-finit-dimensional} hold. We first  consider  $(q_1, q_2)_{d_1,d_2}(s), s \geq s_0$  a solution of equation \eqref{equation-satisfied-by-q-1-2} with initial data at  $s_0$  is  $(q_1,q_2)(s_0)$, which depend on $(d_1,d_2)$ as follows
$$ (q_1,q_2)_{d_1,d_2}(s_0) =  (\phi_1, \phi_2) (s_0).$$
From Lemma  \ref{lemma-control-initial-data}  and  by construction  of the set  $\mathcal{D}_{A,s_0},$ we know that 
\begin{equation}\label{q-1-q-2-s-0-V-A-s-0}
(q_1,q_2) (s_0) \in V_A (s_0).
\end{equation} 
By contradiction, we assume  that  for all $(d_1,d_2) \in \mathcal{D}_{A, s_0}$ there exists  $s_1  \in [s_0,  + \infty)$ such that
$$  (q_1, q_2)_{d_1,d_2} (s_1) \notin V_A(s_1).$$
Then,  for each  $(d_1, d_2) \in \mathcal{D}_{A,s_0},$ we can define   
$$s^* (d_1, d_2) = \inf \{ s_1  \geq s_0 \text{ such that } (q_1,q_2)_{d_1,d_2} (s_1) \notin V_A (s_1)\}.$$
Since   there exists   $s_1$ such that $(q_1,q_2)(s_1) \notin V_A(s_1)$   we deduce that  $s^*(d_1,d_2) < + \infty$ for 	all $ (d_1,d_2) \in \mathcal{D}_{A, s_0}.$ Besides that,   using    \eqref{q-1-q-2-s-0-V-A-s-0}, and the minimality   of $s^* (d_1,d_2),$ the continuity of $(q_1,q_2)$ in $s$ and  the closeness  of $V_A(s)$  we derive that $ (q_1,q_2) (s^*(d_1,d_2)) \in \partial V_A (s^*(d_1,d_2))$ and for all  $s \in [s_0, s^*(d_1,d_2)],$
$$ (q_1,q_2) (s) \in V_A (s).$$

\noindent
Therefore,  from  item $(i)$ of Proposition \ref{pro-reduction- to-finit-dimensional} we see that  
$$ ( q_{1,0},  (q_{1,j} )_{j \leq n }  , q_{2,0},  (q_{2,j})_{j\leq n}, (q_{2,j,k} )_{j,k \leq n} )(s^*(d_1,d_2))\in \hat V_A(s^*(d_1,d_2)).$$
This means that following mapping $\Gamma$ is well-defined: 
\begin{eqnarray*}
\Gamma : \mathcal{D}_{A, s_0} & \to & \partial \left(  [-1, 1]^{1 + n} \times [-1, 1]^{1 + n}  \times [-1, 1]^{\frac{n (n+1)}{2}}\right)\\
(d_1,d_1)  &\mapsto  & \left(  \frac{s_*^2(d_1,d_2)}{A}(q_{1,0}, (q_{1,j})_{j\leq n}), \frac{s^{p_1 + 2}}{A^2}  (q_{2,0}, (q_{2,j})_{j\leq n}),
 \frac{s_*^{p_1+2}(d_1,d_2)}{A^5 \ln s_*(d_1,d_2)}  (q_{2,j,k})_{j,k \leq n }\right)(s^*(d_1,d_2)).
\end{eqnarray*}
Moreover, it       satisfies  the two  following properties:
\begin{itemize}
\item[(i)] $\Gamma$ is continuous from $\mathcal{D}_{A, s_0}$ to $ \partial \left(  [-1, 1]^{\frac{n^2 + 5n + 4}{2}}\right).$  This is a consequence  of item $(ii)$ in Proposition \eqref{pro-reduction- to-finit-dimensional}.
\item[(ii)] The degree  of the   restriction $\Gamma \left|\right. _{\partial \mathcal{D}_{A,s_0}}$ is   non zero.  
Indeed, again  by item $(ii)$ in Proposition \ref{pro-reduction- to-finit-dimensional}, we have 
$$ s^* (d_1,d_2) = s_0,$$
in this case. Applying  \eqref{degree-Psi-1}, we get the conclusion.
\end{itemize}
In fact, such  a mapping $\Gamma$  can not  exist by Index theorem,   this is a contradiction. Thus,   Proposition \ref{pro-existence-d-1-d-1}  follows, assuming  that  Proposition  \ref{pro-reduction- to-finit-dimensional} (see Section  \ref{the proof of proposion-reduction-finite-dimensional} for the proof of latter)
\end{proof}
\subsection{ The proof of Theorem \ref{Theorem-profile-complex}} 
In this section, we aim at giving the  proof of Theorem \ref{Theorem-profile-complex}.
\begin{proof} \textit{ Proof of Theorem \ref{Theorem-profile-complex} assuming  that Proposition  \ref{pro-reduction- to-finit-dimensional}}

\medskip
+ \textit{The proof of  item $(i)$ of Theorem \ref{Theorem-profile-complex}:} 
 Using Proposition \ref{pro-existence-d-1-d-1},  there exists      initial data  $(q_1,q_2)_{d_1,d_2}(s_0) = (\phi_1,\phi_2)(s_0)$ 	 such that the solution of equation of  \eqref{equation-satisfied-by-q-1-2} exists  globally  on $[s_0, + \infty)$ and   satisfies:  
 $$ (q_1,q_2) (s) \in V_A (s), \forall  s \in [s_0, + \infty),$$
 Thanks to  similarity variables \eqref{similarity-variales},  \eqref{defini-q-1-2}   and item   $(i)$   in Lemma \ref{lemma-estiam-q-1-2-in V-A},   we conclude that there exist   initial data $u^0 $  of the form given in Remark \ref{remark-initial-data} with $(d_1,d_2)$  given in Proposition \ref{pro-existence-d-1-d-1}  such that  the solution  $u(t)$  of equation  \eqref{equ:problem}  exists  on $[0,T),$ where  $T  = e^{- s_0}$ and  satisfies \eqref{esttima-theorem-profile-complex} and \eqref{estima-the-imginary-part}. Using these two estimates, we see  that 
$$  u(0,t)  \sim   \kappa (T -t)^{-\frac{1}{p-1}}  \text{ as }  t \to T,$$ 
which means  that  $u$ blows up  at time $T$ and   the origin is a blowup point.    It remains  to prove that  for all $x \neq 0,$  $x$ is not a blowup point of $u$. The following Lemma  allows us  to conclude. 
 \begin{lemma}[No blow up  under  some  threshold]\label{lemma-no-blowup-solution}
For all $C_0 > 0, 0 \leq  T_1 <  T$ and $\sigma>0$ small enough,  there exists $\epsilon_0 (C_0,T, \sigma)> 0$  such that   $u(\xi, \tau)$ satisfies the following  estimates    for all $|\xi| \leq   \sigma, \tau \in \left[T_1,T\right)$:
 $$ \left| \partial_\tau u - \Delta u  \right| \leq C_0 |u|^p,$$
 and 
 $$ |u(\xi,\tau)|       \leq \epsilon_0 (1 -\tau)^{-\frac{1}{p-1}}.$$
 Then,  $u$ does not  blow up at $\xi = 0, \tau = T$. 
 \end{lemma}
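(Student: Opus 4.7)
The plan is to work in self-similar variables centered at the candidate blow-up point $(0,T)$. Setting
$$w(y,s) := (T-\tau)^{\frac{1}{p-1}}\, u\bigl(\sqrt{T-\tau}\, y,\tau\bigr), \qquad s := -\log(T-\tau),$$
a direct computation converts the differential inequality $|\partial_\tau u - \Delta u| \le C_0 |u|^p$ into
$$\Bigl|\partial_s w - \Delta_y w + \tfrac{1}{2}\, y\cdot \nabla_y w + \tfrac{1}{p-1} w \Bigr| \,\le\, C_0 |w|^p \,\le\, C_0 \epsilon_0^{p-1}|w|,$$
valid on the expanding set $\{|y| \le \sigma\, e^{s/2}\}$ for $s \ge s_1 := -\log(T-T_1)$, while the hypothesis $|u(\xi,\tau)| \le \epsilon_0 (T-\tau)^{-1/(p-1)}$ becomes the pointwise bound $|w(y,s)| \le \epsilon_0$ there. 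Introducing $\mathcal{M} := \Delta - \tfrac{1}{2}\, y\cdot \nabla - \tfrac{1}{p-1}\,I$, whose spectrum in $L^2_\rho(\mathbb{R}^n)$ is $\{-\tfrac{m}{2} - \tfrac{1}{p-1} : m \in \mathbb{N}\}$ and thus has largest eigenvalue $-\tfrac{1}{p-1}<0$, the inequality reads $|\partial_s w - \mathcal{M} w| \le C_0 \epsilon_0^{p-1}|w|$; for $\epsilon_0$ small enough that $C_0\epsilon_0^{p-1} < \tfrac{1}{2(p-1)}$, the perturbed evolution retains strictly dissipative leading order.

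Next I would localize with a cut-off $\chi_R \in C_c^\infty(\mathbb{R}^n)$, $\chi_R \equiv 1$ on $|y|\le R$ and $\chi_R \equiv 0$ on $|y| \ge 2R$, for a large fixed $R$. For $s$ sufficiently large, $\mathrm{supp}\,\chi_R$ is contained in the region where the bound $|w|\le \epsilon_0$ holds; the product $v := \chi_R w$ then satisfies $\partial_s v = \mathcal{M} v + \tilde g$ where $|\tilde g| \le C_0 \epsilon_0^{p-1}|v| + E_R$, and the commutator error $E_R = -[\chi_R, \Delta]w + \tfrac{1}{2}[\chi_R, y\cdot \nabla]w$ is supported in $R \le |y| \le 2R$ and bounded by $C\epsilon_0/R$. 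Applying Duhamel's formula against the Ornstein--Uhlenbeck-type semigroup $e^{s\mathcal{M}}$ (which acts contractively on $L^\infty$ with rate $e^{-s/(p-1)}$ via Mehler's formula), one obtains
$$\|v(\cdot,s)\|_{L^\infty} \,\le\, C\epsilon_0 \,e^{-(s-s_1)/(2(p-1))} \,+\, C\epsilon_0/R.$$
Choosing first $R$ large and then $s$ large yields $|w(0,s)| \to 0$; running the same argument at nearby base points gives $\sup_{|\xi|\le \rho}(T-\tau)^{1/(p-1)}|u(\xi,\tau)| \to 0$ as $\tau \to T$ on some fixed small ball $|\xi| \le \rho$.

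Finally, once the improved decay $|u(\xi,\tau)| = o\bigl((T-\tau)^{-1/(p-1)}\bigr)$ holds uniformly near $(0,T)$, a standard parabolic $L^p$--$L^q$ regularity bootstrap applied to $|\partial_\tau u - \Delta u| \le C_0|u|^p$ upgrades this to a uniform $L^\infty$ bound for $u$ on $\{|\xi| \le \rho/2\} \times [T_1,T)$, and then to $C^{2+\alpha}$ regularity via Schauder estimates. Hence $u$ extends continuously (in fact smoothly) across $\tau = T$ in a neighborhood of $\xi = 0$, contradicting the assumption that $(0,T)$ is a blow-up point.

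The main obstacle is the Duhamel bootstrap step: although $\mathcal{M}$ has strictly negative spectrum, the commutator error $E_R$ must be controlled uniformly in $s$, and the parameters must be chosen in a compatible order—threshold $\epsilon_0 = \epsilon_0(C_0,T,\sigma)$, radius $R = R(\sigma)$, and starting time $s$ large—so that the cut-off error is beaten by the semigroup contraction without creating a circular dependency between $\epsilon_0$ and $R$.
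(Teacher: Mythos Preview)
The paper does not actually prove this lemma: it simply records that the argument of Theorem~2.1 in Giga--Kohn \cite{GKcpam89}, written for real-valued solutions, carries over verbatim to the complex-valued setting. Your proposal is precisely a sketch of that Giga--Kohn argument (pass to similarity variables centered at the candidate blow-up point, observe that the linear part $\mathcal{M}=\Delta-\tfrac12 y\cdot\nabla-\tfrac1{p-1}$ has strictly negative spectrum, absorb the nonlinearity as a small perturbation when $\epsilon_0$ is small, then bootstrap), so at the level of strategy you and the paper agree.

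One point in your sketch deserves more care. When you localize with $\chi_R$, the commutator term you call $E_R$ contains $[\Delta,\chi_R]w = (\Delta\chi_R)w + 2\nabla\chi_R\cdot\nabla w$, and the second summand involves $\nabla w$, not just $w$; your claimed bound $|E_R|\le C\epsilon_0/R$ is not immediate from $|w|\le\epsilon_0$ alone. There are two standard fixes: either invoke interior parabolic gradient estimates for the differential inequality to get $|\nabla w|\le C\epsilon_0$ on $\{R\le|y|\le 2R\}$ once $s$ is large, or rewrite $2\nabla\chi_R\cdot\nabla w = 2\nabla\cdot(w\nabla\chi_R)-2w\Delta\chi_R$ and use the $L^\infty$-smoothing of the Ornstein--Uhlenbeck semigroup on divergence-form sources in the Duhamel integral. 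Either way the estimate goes through, but you should say which route you take. The final passage from $|w(0,s)|\to 0$ uniformly over nearby base points to an honest $L^\infty$ bound on $u$ is likewise correct but telegraphic; in Giga--Kohn this is done by iterating the decay (each improvement of $\|w\|_{L^\infty}$ sharpens the effective semigroup rate) until the exponent exceeds $\tfrac1{p-1}$.
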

 \begin{proof}
  The   proof of this Lemma is processed  similarly   to      Theorem   2.1     in \cite{GKcpam89}.        Although  the proof  of  \cite{GKcpam89} was  given in the  real case, it extends naturally  to the complex  valued  case.    
 \end{proof}
 
 \noindent We next use   Lemma  \ref{lemma-no-blowup-solution}  to conclude that $u$ does not blow up at  $x_0 \neq  0.$  Indded,  if   $x_0 \neq 0$ we use  \eqref{esttima-theorem-profile-complex}   to deduce the following: 
 \begin{equation}\label{estima-u-x-0-neq-0}
 \sup_{|x - x_0| \leq \frac{|x_0|}{2}}   (T - t)^{\frac{1}{p -1}} | u(x,t) |  \leq  \left|  f_0 \left(   \frac{ \frac{|x_0|}{2}}{  \sqrt{{(T -t )}|\ln (T -t)| } }\right)  \right| + \frac{C}{ \sqrt{ | \ln (T - t)|}} \to 0, \text{ as }  t \to T. 
 \end{equation}
 Applying Lemma  \ref{lemma-no-blowup-solution}  to $u(x - x_0, t),$ with some $\sigma$ small enough such that $\sigma \leq \frac{|x_0|}{2},$ and $T_1$  close enough to $T,$ we see that $u(x - x_0, t)$ does not blow up at time   $T$ and  $x = 0$. Hence $x_0 $ is not a blow-up point of  $u$. This concludes  the proof of item $(i)$ in Theorem \ref{Theorem-profile-complex}.  
 
 \medskip
+  \textit{The proof of item $(ii)$ of Theorem \ref{Theorem-profile-complex}:}
 Here,  we   use the argument  of Merle in   \cite{Mercpam92}    to deduce  the existence of  $u^* = u_1^* + i u_2^*$   such that   $u(t) \to u^* $  as  $t \to T$ uniformly on compact sets of $\mathbb{R}^n \backslash \{0\}$. In addition to that, we use   the techniques in  Zaag \cite{Zcpam01}, Masmoudi and Zaag  \cite{ MZjfa08}, Tayachi and Zaag \cite{TZpre15} for the proofs of \eqref{asymp-u-start-near-0-profile-complex} and \eqref{asymp-u-start-near-0-profile-complex-imaginary-part}. 
 
\noindent 
 Indeed,   for all   $x_0 \in \mathbb{R}^n , x_0 \neq 0 $, we deduce   from   \eqref{esttima-theorem-profile-complex},  \eqref{estima-the-imginary-part}   that  not only  \eqref{estima-u-x-0-neq-0} holds but also the following satisfied:
 \begin{eqnarray}
  \sup_{|x - x_0| \leq \frac{|x_0|}{2}}   (T - t)^{\frac{1}{p -1}} |\ln(T -t)|| u_2(x,t) |  &\leq & \left|  \frac{3 |x_0|^2}{2 (T -t ) |\ln (T -t)|    } f_0 ^p\left(  \frac{ \frac{|x_0|}{2}}{  \sqrt{{(T -t )}|\ln (T -t)| } }\right)  \right|\label{estimates-T-t-u-leq-epsilon}  \\
  &+&  \frac{C}{  | \ln (T - t)|^{\frac{p_1}{2}}} \to 0,\nonumber \text{ as } t \to T.
 \end{eqnarray}
  We now consider  $x_0$ such that $|x_0|$ is  small enough,   and $K_0$ to be fixed later. We   define  $t_0(x_0)$  by 
  \begin{equation}\label{x-0-leq-delta-t-x-0}
  |x_0|   =  K_0 \sqrt{ T -t_0(x_0) |\ln (T -t_0(x_0))|} . 
  \end{equation}
Note that  $t_0 (x_0)$  is unique  when  $|x_0|$  is small enough and $t_0 (x_0)\to T$  as  $x_0 \to 0$.  We  introduce  the rescaled functions  $U(x_0, \xi, \tau)$  and   $V_2 (x_0, \xi, \tau)$        as follows:
   \begin{equation}\label{equa-upsilon-xi-tau}
U (x_0, \xi, \tau)  = \left( T - t_0 (x_0)\right)^{\frac{1}{p-1}} u(x,t).
\end{equation}
and  
\begin{equation}\label{defini-V-2-x-0-xi-tau}
V_2 (x_0, \xi, \tau)  = |\ln (T- t_0(x_0))| U_2 (x_0, \xi, \tau),
\end{equation}
where  $U_2 (x_0, \xi, \tau)$   is  defined by
$$ U (x_0, \xi, \tau)  = U_1 (x_0, \xi, \tau) + i U_2 (x_0, \xi, \tau),$$
and  
\begin{equation}\label{relation-x-and-xi-tau}
(x,t) = \big(x_0 + \xi\sqrt{T - t_0(x_0)},t_0(x_0) + \tau (T - t_0(x_0))\big), \text{ and } (\xi, \tau) \in \mathbb{R}^n \times \left[ - \frac{t_0(x_0)}{T - t_0(x_0)}, 1 \right).
\end{equation}
We can see that  with these notations, we derive from item $(i)$ in Theorem \ref{Theorem-profile-complex} the following estimates for initial data at $\tau = 0 $ of $U$ and $V_2$
\begin{eqnarray}
\sup_{|\xi| \leq   |\ln(T - t_0(x_0))|^{\frac{1}{4}}} \left| U(x_0, \xi, 0) - f_0(K_0)\right| & \leq & \frac{C}{ 1 + (|\ln(T - t_0(x_0))|^{\frac{1}{4}})} \to 0 \quad \text{ as } x_0 \to 0,\label{condition-initial-K-0-f-0}\\
\sup_{|\xi| \leq  |\ln(T - t_0(x_0))|^{\frac{1}{4}}} \left| V_2(x_0, \xi, 0)- g_0(K_0)\right| &\leq &\frac{C}{ 1 + (|\ln(T - t_0(x_0))|^{ \gamma_1})} \to 0 \quad \text{ as } x_0 \to 0.\label{condition-initial-K-0-g-0}
\end{eqnarray}
where $f_0 (x), g_0 (x)$ are defined as in  \eqref{defini-f-0} and \eqref{defini-g-0-z} respectively, and  $\gamma_1 = \min \left(  \frac{1}{4} ,   \frac{p_1}{2} \right) $. Moreover, using equations \eqref{equation-satisfied-u_1-u_2}, we derive the following equations for $U, V_2$: for all   $\xi \in \mathbb{R}^n, \tau \in\left[ 0, 1 \right)$ 
\begin{eqnarray}
\partial_\tau U =  \Delta_\xi  U +  U^p,\label{equa-U-x-0-xi-tau}\\
\partial_\tau V_2 =  \Delta_\xi V_2 + V_2 G_2 (U_1, U_2), \label{equa-V-2-x-0-xi-tau}
\end{eqnarray}
where $G$   is defined  by 
\begin{equation}\label{defini-G-2-U-1-2}
G (U_1, U_2) U_2  = F_2 (U_1,U_2),
\end{equation}
and $F_2$ is defined in \eqref{defi-mathbb-A-1-2}. We note that  $G_2, F_2$ are polynomials   of $U_1, U_2$.

 Besides that,  from  \eqref{estimates-T-t-u-leq-epsilon} and \eqref{equa-U-x-0-xi-tau}, we can apply Lemma  \ref{lemma-no-blowup-solution}   to $U$ when $|\xi| \leq |\ln (T - t_0(x_0))|^{\frac{1}{4}}$  and obtain:
\begin{equation}\label{bound-U-xi-tau-x-0}
\sup_{|\xi| \leq \frac{1}{2}|\ln (T -t_0(x_0))|^\frac{1}{4}, \tau \in [0,1) }  |U (x_0, \xi,\tau)|     \leq C. 
\end{equation}
and we  aim at  proving    for $V_2 (x_0, \xi, \tau)$  that
\begin{equation}\label{bound-V-2-xi-tau-x-0}
\sup_{|\xi| \leq  \frac{1}{16}|\ln (T -t_0(x_0))|^\frac{1}{4}, \tau \in [0,1) }  |V_2(x_0, \xi,\tau)|     \leq C.
\end{equation}
+  \textit{The proof for \eqref{bound-V-2-xi-tau-x-0}:}  We first use \eqref{bound-U-xi-tau-x-0} to derive  the following rough estimate:
\begin{equation}\label{estima-V-2-1-step-0}
\sup_{|\xi| \leq \frac{1}{2} |\ln (T -t_0(x_0))|^\frac{1}{4}, \tau \in [0,1) }  |V_2(x_0, \xi,\tau)|     \leq C |\ln(T -t_0(x_0))|.
\end{equation}
We first introduce $\psi (x)$ a cut-off function $\psi \in C^\infty_0 (\mathbb{R}^n), 0 \leq  \psi \leq 1, supp(\psi ) \subset B(0,1), \psi = 1   $ on $B( 0, \frac{1}{2}).$  We   introduce
\begin{equation}\label{U-2-1-psi-1-x-0}
\psi_1 (\xi) = \psi \left(  \frac{2\xi}{  |\ln (T -t_0 (x_0))|^{\frac{1}{4}}} \right) \text{ and  }  V_{2,1} (x_0, \xi, \tau) = \psi_1 (\xi) V_2 (x_0,\xi, \tau).
\end{equation}
Then, we deduce from  \eqref{equa-V-2-x-0-xi-tau}  an  equation satisfied by $V_{2,1}$
\begin{equation}\label{equa-V-2-1-x-0}
\partial_\tau  V_{2,1}  =  \Delta_\xi V_{2,1} - 2 \text{ div} (V_2 \nabla \psi_1)  + V_2 \Delta \psi_1 + V_{2,1} G_1(U_1,U_2).
\end{equation}
Hence, we can write $V_{2,1} $ with a integral  equation as follows
\begin{equation}\label{equa-integral-V-2-1}
V_{2,1} (\tau)  =   e^{\Delta \tau} (V_{2,1}(0)) +  \int_0^\tau e^{(\tau - \tau')\Delta} \left(  - 2 \text{ div } (V_2 \nabla \psi_1) + V_2 \Delta\psi_1 + V_{2,1} G(U_1, U_2)(\tau')  \right) d \tau'.
\end{equation}
Besides that, using \eqref{bound-U-xi-tau-x-0} and \eqref{estima-V-2-1-step-0} and  the fact  that 
\begin{eqnarray*}
| \nabla \psi_1|  \leq  \frac{C}{ | \ln (T -t_0(x_0))|^{\frac{1}{4}}}, 
| \Delta \psi_1|  \leq  \frac{C}{ | \ln (T -t_0(x_0))|^{\frac{1}{2}}},
\end{eqnarray*}
we deduce that
\begin{eqnarray*}
\left|   \int_0^\tau e^{(\tau - \tau')\Delta} \left(  - 2 \text{ div } (V_2 \nabla \psi_1) \right) d\tau' \right| &\leq & C \int_{0}^\tau \frac{\| V_2 \nabla \psi_1\|_{L^\infty} (\tau')}{ \sqrt { \tau - \tau'}}  d\tau'       \leq C |\ln (T - t_0 (x_0))|^{\frac{3}{4}},\\
\left|   \int_0^\tau e^{(\tau - \tau')\Delta} \left(  V_2 (\tau') \Delta \psi_1  \right) d\tau' \right| &\leq & C  \int_0^\tau  \| V_2 \Delta \psi_1\|_{\infty}   (\tau') d\tau' \leq C |\ln (T -t_0 (x_0))|^\frac{1}{2}, \\
\left|   \int_0^\tau e^{(\tau - \tau')\Delta} \left(  V_2 \psi_1 G(U_1, U_2)(\tau')   \right) d\tau' \right|   &\leq  & C \int_0^\tau \| V_{2,1 } G_2 (U_1,U_2)\|_{L^\infty} (\tau') d\tau'.
\end{eqnarray*}
Note that $G_2 (U_1, U_2)$  in the last line is bounded  on $|\xi| \leq |\ln(T -t_0)|^\frac{1}{4}, \tau \in [0,1)$  because  it is  a polynomial in $U_1, U_2$ and        \eqref{bound-U-xi-tau-x-0} holds,  then,  we  derive 
$$ \| V_{2,1}  G_2 (U_1, U_2)\|_{L^\infty}(\tau')  \leq C \| V_{2,1}\|_{L^\infty} (\tau').$$
Hence, from  \eqref{equa-integral-V-2-1} and the above estimates, we derive
$$ \| V_{2,1}(\tau)\|_{L^\infty} \leq C | \ln (T -t_0 (x_0)) |^{\frac{3}{4}} +  C \int_0^\tau   \|V_{2,1}(\tau')\|_{L^\infty} d \tau'. $$
Thanks to Gronwall Lemma,  we deduce that 
$$ \|V_{2,1} (\tau)\|_{L^\infty}  \leq C |\ln(T -t_0(x_0))|^{\frac{3}{4}}, \forall  \tau \in [0,1),$$
which yields
\begin{equation}\label{estima-V-2-1-step-1}
\sup_{|\xi| \leq \frac{1}{4} |\ln (T -t_0(x_0))|^\frac{1}{4}, \tau \in [0,1) }  |V_2(x_0, \xi,\tau)|     \leq C |\ln(T -t_0(x_0))|^{\frac{3}{4}}.
\end{equation}
We apply  iteratively for 
$$ V_{2,2} (x_0, \xi, \tau) =   \psi_2 (\xi) V_{2} (x_0,\xi, \tau)  \text{ where }  \psi_2 (\xi)  = \psi \left( \frac{4 \xi}{ |\ln(T -t_0 (x_0))|^{\frac{1}{4}}} \right).$$
Similarly, we  deduce that
$$ \sup_{|\xi| \leq \frac{1}{8} |\ln (T -t_0(x_0))|^\frac{1}{4}, \tau \in [0,1) }  |V_2(x_0, \xi,\tau)|     \leq C |\ln(T -t_0(x_0))|^{\frac{1}{2}}.$$
We apply this  process a  finite   number of  steps to  obtain \eqref{bound-V-2-xi-tau-x-0}.  We now come back to our problem, and aim at proving that:  
\begin{eqnarray}
\sup_{|\xi| \leq \frac{1}{16}  |\ln(T - t_0(x_0))|^{\frac{1}{4}}, \tau \in [0,1)} \left| U (x_0, \xi, \tau) - \hat U_{K_0} (\tau) \right| &\leq & \frac{C}{ 1 + |\ln (T- t_0(x_0) )|^{\gamma_2}}, \label{sup-v-xi-tau-apro-1}\\
\sup_{|\xi| \leq \frac{1}{32}|\ln(T - t_0(x_0))|^{\frac{1}{4}}, \tau \in [0,1)} \left| V_2 (x_0, \xi, \tau) - \hat V_{2,K_0} (\tau) \right| &\leq & \frac{C}{ 1 + |\ln (T- t_0(x_0) )|^{\gamma_3}}, \label{sup-V-2-xi-tau-apro-1}
\end{eqnarray}
where  $\gamma_2, \gamma_3$ are positive small enough  and   $( \hat U_{K_0} , \hat V_{2,K_0}) (\tau)  $  is  the  solution of  the following system:
\begin{eqnarray}
\partial_\tau \hat U_{K_0} &=& \hat U_{K_0}^p,\label{ODE-hat-U-K-0}\\
 \partial_\tau \hat V_{2, K_0} &=& p \hat U_{K_0}^{p-1} \hat V_{2,K_0}\label{ODE-hat-U-K-0}. 
\end{eqnarray}
with initial data  at $\tau = 0$
\begin{eqnarray*}
\hat U_{K_0} (0) &=& f_0 (K_0),\\
\hat V_{2,K_0} (0) &=& g_0 (K_0). 
\end{eqnarray*}
given by 
\begin{eqnarray}
\hat U_{K_0} (\tau) &=& \left(  (p-1) (1 - \tau)  + \frac{(p-1)^2 K_0^2}{4 p}\right)^{-\frac{1}{p-1}}  ,\label{defini-hat-U-K-0-tau}\\
\hat V_{2,K_0} (\tau) &=& K_0 ^2 \left( (p-1) (1 - \tau) + \frac{(p-1)^2 K_0^2}{4 p}\right)^{-\frac{p}{p-1}}.  \label{defini-hat-V-2-K-0-tau}
\end{eqnarray}
for all $\tau \in [0,1)$.  The proof of \eqref{sup-v-xi-tau-apro-1}  is cited to Section 5 of Tayachi and Zaag \cite{TZpre15} and the proof of  \eqref{sup-V-2-xi-tau-apro-1} is similar.   For the  reader's  convenience, we give it here.  Let us consider 
\begin{equation}\label{defini-mathcal-V-2}
\mathcal{V}_2 = V_2 - \hat V_{2,K_0} (\tau).
\end{equation}
Then, $\mathcal{V}_2$ satisfies
\begin{equation}\label{estima-mathcal-V-2}
\sup_{|\xi| \leq \frac{1}{ 16} | \ln (T - t_0(x_0))|^{\frac{1}{4}}, \tau \in [0,1)} | \mathcal{V}_{2}| \leq C.
\end{equation}
 We use  \eqref{equa-V-2-x-0-xi-tau} to derive  an equation  on  $\mathcal{V}_2$ as follows:
\begin{equation}\label{equat-mathcal-V-2}
\partial_\tau \mathcal{V}_2 = \Delta \mathcal{V}_2 +  p \hat U_{K_0}^{p-1} \mathcal{V}_2 +  p (U_1^{p-1}  - \hat U_{K_0}^{p-1} ) V_2 + \mathcal{G}_2(x_0, \xi, \tau),
\end{equation}
where 
$$ \mathcal{G}_2 (x_0, \xi,\tau) =  V_2 [ G_2 (U_1, U_2) - p U_1^{p-1} ].$$
Note that, from definition of $G_2$  and  \eqref{bound-U-xi-tau-x-0} we deduce that 
$$  \sup_{|\xi| \leq  \frac{1}{2} |\ln(T -t_0)|^{\frac{1}{4}}, \tau \in [0,1) } |  G_2 (U_1, U_2)  - p U_1^{p-1}| \leq C |U_2|,$$
Hence,  using   \eqref{defini-V-2-x-0-xi-tau} and \eqref{bound-V-2-xi-tau-x-0}   and   we derive 
\begin{equation}\label{estima-mathcal-G-2}
  \sup_{|\xi| \leq  \frac{1}{16} |\ln(T -t_0)|^{\frac{1}{4}}, \tau \in [0,1) } | \mathcal{G}_2 (x_0, \xi, \tau)| \leq   \frac{C}{  | \ln (T -t_0 (x_0))|}.
\end{equation}
We also define 
$$\bar{\mathcal{V} }_2 = \psi_* (\xi) \mathcal{V}_2,$$
where 
$$ \psi_* = \psi \left( \frac{1 6 \xi}{ |\ln (T- t_0 (x_0))|^{\frac{1}{4}}}\right),$$
and $ \psi  $ is the cut-off function  which has been introduced  above. We also note that $\nabla \psi_*, \Delta \psi_*$ satisfy the following estimates
\begin{equation}\label{estima-psi-nabla-xi-psi-x-0}
\| \nabla_\xi  \psi_* \|_{L^\infty} \leq \frac{C}{ |\ln (T- t_0 (x_0))|^{\frac{1}{4}}}  \text{ and }  \| \Delta_\xi  \psi_*\|_{L^\infty} \leq \frac{C}{ |\ln (T- t_0 (x_0))|^{\frac{1}{2}}}.
\end{equation}
In particular,  $\bar{\mathcal{V}}_2$ satisfies
\begin{equation}\label{euqa-bar-mathcal-V-2}
\partial_\tau   \bar {\mathcal{V}}_2  =  \Delta \bar {\mathcal{V}}_2 + p \hat U_{K_0}^{p-1} (\tau)  \bar {\mathcal{V}}_2   - 2 \text{ div } (\mathcal{V}_2 \nabla \psi_*) + \mathcal{V}_2 \Delta \psi_* + p (U_1^{p-1} - \hat U_{K_0}^{p-1}) \psi_* V_2 + \psi_* \mathcal{G}_2,
\end{equation}

By Duhamel principal,  we derive the following integral  equation 
\begin{equation}\label{duhamel-bar-mathcal-V-2}
\bar{ \mathcal{V}}_2 (\tau)  = e^{\tau \Delta} (\bar{ \mathcal{V}}_2 (\tau)  ) + \int_0^\tau  e^{(\tau - \tau')\Delta} \left( p \hat U_{K_0}^{p-1}   \bar {\mathcal{V}}_2   - 2 \text{ div } (\mathcal{V}_2 \nabla \psi_*) + \mathcal{V}_2 \Delta \psi_* + p (U_1^{p-1} - \hat U_{K_0}^{p-1}) \psi_* V_2 + \psi_* \mathcal{G}_2   \right) (\tau') d\tau'.
\end{equation}
Besides that, we use    \eqref{sup-v-xi-tau-apro-1},  \eqref{defini-hat-U-K-0-tau},   \eqref{estima-mathcal-V-2},   \eqref{estima-psi-nabla-xi-psi-x-0},   \eqref{estima-mathcal-G-2} to derive the  following estimates: for all $\tau \in [0,1)$ 
\begin{eqnarray*}
|\hat U_{K_0} (\tau )|  & \leq & C ,\\
\| \mathcal{V}_2 \nabla \psi_* \|_{L^\infty} (\tau)  & \leq  & \frac{C }{ |\ln (T -t_0(x_0))|^{\frac{1}{4}}},\\
\| \mathcal{V}_2 \Delta \psi_* \|_{L^\infty} (\tau)  & \leq  & \frac{C }{ |\ln (T -t_0(x_0))|^{\frac{1}{2}}},\\
\left\|  \left(U_1^{p-1} - \hat U_{K_0}^{p-1} \right) \psi_*   \right\|_{L^\infty} (\tau)& \leq & \frac{C}{ | \ln(T -t_0(x_0))|^{\gamma_2}},  \\
\| \mathcal{G}_2 \psi_*\|_{L^{\infty}} & \leq & \frac{C}{ |\ln (T -t_0(x_0))| }.
\end{eqnarray*}
where $\gamma_2$ given in \eqref{sup-v-xi-tau-apro-1}. Hence, we  derive from the above estimates that:  for all $\tau \in [0,1)$
\begin{eqnarray*}
|  e^{(\tau - \tau')\Delta}p \hat U_{K_0}^{p-1}   \bar {\mathcal{V}}_2  (\tau') |  & \leq  &C \|\bar {\mathcal{V}}_2  (\tau') \|,\\
| e^{(\tau - \tau')\Delta} (\text{div} (\mathcal{V}_2 \nabla \psi_*)) | &\leq & C \frac{1}{\sqrt{ \tau - \tau'}}  \frac{1}{| \ln (T-  t_0(x_0))|^{\frac{1}{4}}}  ,\\
|e^{(\tau - \tau')\Delta } ( \mathcal{V}_2 \Delta \psi_*)  |   & \leq &   \frac{C}{|\ln(T - t_0(x_0) )|^{\frac{1}{2}} },\\
|e^{(\tau - \tau')\Delta } ( p (U_1^{p-1} - \hat U_{K_0}^{p-1}) \psi_* V_2   )(\tau')  |   & \leq &   \frac{C}{ |\ln (T -t_0(x_0))|^{\gamma_2}},\\
|e^{(\tau - \tau')\Delta }  (\psi_* \mathcal{G}_2  )(\tau')| &\leq & \frac{C}{|\ln (T - t_0(x_0))|}.
\end{eqnarray*}
   Pluggin    into   \eqref{duhamel-bar-mathcal-V-2}, we obtain
$$  \|\bar{\mathcal{V}}_2 (\tau)\|_{L^\infty}  \leq  \frac{C}{ |\ln(T -t_0(x_0))|^{\gamma_3}} +  C \int_{0}^\tau  \|\bar{\mathcal{V}}_2 (\tau')\|_{L^\infty}   d \tau' ,$$
where $\gamma_3 = \min (\frac{1}{4}, \gamma_2)$. Then, thanks to Gronwall inequality, we get
$$\| \bar{ \mathcal{V}}_2\|_{L^\infty} \leq \frac{C}{|\ln(T -t_0(x_0))|^{\gamma_3}}.$$
Hence,  \eqref{sup-V-2-xi-tau-apro-1} follows . Finally, we  easily  find the asymptotics  of $u^*$ and $u_2^*$ as follows, thanks to the definition of $U$ and  $V_2$  and   to estimates \eqref{sup-v-xi-tau-apro-1} and   \eqref{sup-V-2-xi-tau-apro-1}:
  \begin{equation}\label{limit-u-start}
u^* (x_0) = \lim_{t \to T} u(x_0, t) = (T- t_0 (x_0))^{- \frac{1}{p-1}} \lim_{\tau \to 1} U (x_0, 0, \tau) \sim (T- t_0 (x_0))^{- \frac{1}{p-1}}  \left(\frac{(p -  1)^2}{ 4 p} K_0^2 \right)^{- \frac{1}{p-1}},
\end{equation} 
and 
\begin{equation}\label{limit-im-u-*}
u_2^* = \lim_{t \to T} u_2(x_0, t) = \frac{(T- t_0 (x_0))^{- \frac{1}{p-1}}}{|\ln (T- t_0 (x_0))|} \lim_{\tau \to 1} V_2 (x_0, 0, \tau) \sim \frac{(T- t_0 (x_0))^{- \frac{1}{p-1}}}{|\ln (T- t_0 (x_0))|} \left(\frac{(p -  1)^2}{ 4 p} \right)^{- \frac{p}{p-1}} (K_0^2)^{- \frac{1}{p-1}}.
\end{equation}
Using the relation \eqref{x-0-leq-delta-t-x-0}, we find that
\begin{equation}\label{asymp-T-t-0-x-0} 
  T  - t_0 \sim \frac{|x_0|^2}{ 2 K_0^2 |\ln |x_0||} \text{ and  }\ln(T - t_0(x_0)) \sim 2 \ln (|x_0|), \quad \text{ as } x_0 \to 0.
  \end{equation}
Plugging  \eqref{asymp-T-t-0-x-0}  into   \eqref{limit-u-start} and   \eqref{limit-im-u-*}, we get the conclusion of  item $(ii)$ of Theorem \ref{Theorem-profile-complex}. 

This concludes the proof of Theorem \ref{Theorem-profile-complex} assuming that Proposition \ref{pro-reduction- to-finit-dimensional} holds. Naturally, we need to prove this propostion  on order  to finish the argument. This will be done  in the next section.
\end{proof}
 \section{The proof of Proposition \ref{pro-reduction- to-finit-dimensional}}\label{the proof of proposion-reduction-finite-dimensional}
 This section is devoted to the proof of Proposition \ref{pro-reduction- to-finit-dimensional},
 which is the heart of our analysis. We proceed into two parts. In the first part, we derive \textit{a priori estimates} on $q(s)$ in $V_A(s)$. In the second part, we show that the new bounds are better than those defined in $V_A(s)$, except for the first  components $( q_{1,0},  (q_{1,j} )_{j \leq n }  , q_{2,0},  (q_{2,j})_{j\leq n}, (q_{2,j,k} )_{j,k \leq n} )(s)$. This means that the problem is reduced to the control of these   components, which is the conclusion of item $(i)$ of Proposition \ref{pro-reduction- to-finit-dimensional}. Item $(ii)$ of Proposition \ref{pro-reduction- to-finit-dimensional} is just a direct consequence of the dynamics of these modes. Let us start the first part.

\subsection{A priori estimates on $(q_1,q_2)$ in $V_A(s)$.}
In this subsection, we aim at proving  the following proposition: 
 \begin{proposition}\label{prop-dynamic-q-1-2-alpha-beta} 
 There exists $A_4\geq 1,$ such that for all $ A \geq A_4$ there 
 exists $s_4(A)\geq 1$, such that the following holds for all
  $s_0 \geq s_4(A)$: we assume that for all $s \in [\sigma,s_1], (q_1,q_2)(s) \in V_A(s)$  for some $s_1\geq  s_0$. Then, the following holds for all $s \in [s_0,s_1]$: 
\begin{itemize}
\item[$(i)$] (\textit{ODE satisfied  by the positive modes})  For all $j \in \{1,n\}$ we have
\begin{equation}\label{ODE-q-1-0-1}
\left| q_{1,0}' (s) -  q_{1,0} (s) \right| + \left| q_{1,j}' (s) -  \frac{1}{2}  
q_{1,j} (s) \right|  \leq \frac{C}{s^2},\forall j\leq n.
\end{equation}
\begin{equation}\label{ODE-Phi-0-1}
\left| q_{2,0}' (s) -  q_{2,0}(s)  \right| +  \left| q_{2,j}' (s) -  \frac{1}{2}   q_{2,j}(s)  \right|  \leq \frac{C }{s^{p_1 +2}}, \forall j \leq n.
\end{equation}
\item[$(ii)$] (\text{ODE satisfied by the null modes}) For all $j,k \leq n$
\begin{equation}\label{ODE-q-1-2}
 \left| q_{1,j,k}' (s) + \frac{2}{s} q_{1,j,k} (s) \right|  \leq \frac{C A}{s^3},
\end{equation}
\begin{equation}\label{ODE-Phi-2}
\left| q_{2,j,k} '(s) + \frac{2}{s} q_{2,j,k}(s)\right| \leq \frac{C A^2 \ln s}{s^{p_1^* + 3}}.
\end{equation}
\item[$(iii)$] (\textit{Control the  negative part})
\begin{equation}\label{Estimata-q-1--}
\left\| \frac{q_{1,-}(.,s)}{1 + |y|^{3}}\right\|_{L^\infty} \leq  Ce^{- \frac{s - \tau }{2}} \left\| \frac{q_{1,-}(.,\tau)}{1 + |y|^{3}}\right\|_{L^\infty}  +  C \frac{ e^{- (s - \tau )^2}}{s^{\frac{3}{2}}}  \|q_{1,e}(.,\tau)\|_{L^\infty} + \frac{C (1 + s -\tau)}{s^2}, 
\end{equation}
\begin{equation}\label{Estimat-q-2--}
\left\| \frac{q_{2,-}(.,s)}{1 + |y|^{3}}\right\|_{L^\infty} \leq  Ce^{- \frac{s - \tau }{2}} \left\| \frac{q_{2,-}(.,\tau)}{1 + |y|^{3}}\right\|_{L^\infty}  +  C \frac{ e^{- (s - \tau )^2}}{s^{\frac{3}{2}}}  \|q_{2,e}(.,\tau)\|_{L^\infty} + \frac{C (1 + s -\tau)}{s^{\frac{p_1 + 5}{2}}}.
\end{equation}
\item[$(v)$] (\textit{Outer part}) 
\begin{equation}\label{outer-Q-e}
\left\| q_{1,e} (.,s) \right\|_{L^\infty} \leq  C e^{- \frac{(s -\tau)}{p} } \|q_{1,e}(.,\tau)\|_{L^\infty}  + C e^{s - \tau }s^{\frac{3}{2}}  \left\| \frac{q_{1,-}(.,\tau)}{ 1 + |y|^3} \right\|_{L^\infty} +  \frac{C (1 + s - \tau)e^{s - \tau}}{\sqrt s},
\end{equation}
\begin{equation}\label{outer-Phi-e}
\left\| q_{2,e} (.,s) \right\|_{L^\infty} \leq  C e^{- \frac{(s -\tau)}{p} } \|q_{2,e}(.,\tau)\|_{L^\infty}  + C e^{s - \tau }s^{\frac{3}{2}}  \left\| \frac{q_{2,-}(.,\tau)}{ 1 + |y|^3} \right\|_{L^\infty} +  \frac{C (1 + s - \tau)e^{s - \tau}}{s^{\frac{p_1 +2}{2}}}.
\end{equation}
\end{itemize} 
\end{proposition}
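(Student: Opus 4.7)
The plan is to exploit the decomposition \eqref{decom-5-parts} of $q_1,q_2$ into positive modes, null modes, negative part and outer part, and derive for each component the evolution it satisfies---an ODE for the finite-dimensional modes, and an integral (Duhamel) identity for $q_{i,-}$ and $q_{i,e}$. All source terms would then be bounded under the standing assumption $(q_1,q_2)\in V_A(s)$, together with the pointwise estimates of Lemma \ref{lemma-estiam-q-1-2-in V-A} and the information on the potentials $V,V_{i,j}$, the quadratic nonlinearities $B_i$ and the remainders $R_i$ recorded just below equation \eqref{defini-the-rest-term-R-2}.

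\textbf{Finite-dimensional modes (items $(i)$ and $(ii)$).} For each multi-index $\beta$ with $|\beta|\le 2$, I would multiply equation \eqref{equation-satisfied-by-q-1-2} by $\chi(y,s)k_\beta(y)\rho(y)$ and integrate. The main linear operator $\mathcal{L}+V$ yields the eigenvalue $1-|\beta|/2$ to leading order; the correction coming from $V$ is $O(s^{-1})$ on every mode, and has to be computed sharply at $|\beta|=2$, where it produces exactly the coefficient $-2/s$ appearing in \eqref{ODE-q-1-2} and \eqref{ODE-Phi-2}, as in the canonical computation of \cite{MZdm97}. The cross-potentials \eqref{defini-V-1-1}--\eqref{defini-V-2-2} each carry at least one factor of $\Phi_2=O(s^{-2})$, so the coupling they create between the two components is dominated. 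The nonlinearities $B_1,B_2$ are estimated from the pointwise bounds of Lemma \ref{lemma-estiam-q-1-2-in V-A}, while the remainders $R_1,R_2$ contribute at most $C/s$ and $C/s^2$ respectively. Commutator terms arising from the cut-off $\chi$ are supported in the annulus $K\sqrt s\le|y|\le 2K\sqrt s$ and are exponentially small thanks to the Gaussian weight $\rho$.

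\textbf{Infinite-dimensional parts (items $(iii)$ and $(v)$).} For the negative and outer parts, I would start from the Duhamel formula
\[
q_i(s) = e^{(s-\tau)(\mathcal{L}+V)}q_i(\tau) + \int_\tau^s e^{(s-s')(\mathcal{L}+V)}\bigl[N_i+R_i\bigr](s')\,ds',
\]
where $N_i$ collects the cross-potential and nonlinear contributions from the last two matrices in \eqref{equation-satisfied-by-q-1-2}, and then project onto the corresponding component. For the negative part, kernel estimates on $e^{\tau(\mathcal{L}+V)}$ in the weighted space with weight $(1+|y|^3)^{-1}$, classical since \cite{BKnon94,MZdm97}, produce the contraction factor $e^{-(s-\tau)/2}$ in \eqref{Estimata-q-1--}--\eqref{Estimat-q-2--}; the forcing terms are controlled by the definition of $V_A(s)$, which gives the $(1+s-\tau)/s^2$ and $(1+s-\tau)/s^{(p_1+5)/2}$ contributions. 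For the outer part, the essential fact is that on $\{|y|\ge K\sqrt s\}$ the potential $V$ converges to $-p/(p-1)$, so the effective generator is $\mathcal{L}-p/(p-1)+o(1)$ whose spectrum lies below $-1/p$ for $K$ large enough, which is exactly the decay rate $e^{-(s-\tau)/p}$ appearing in \eqref{outer-Q-e}--\eqref{outer-Phi-e}.

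\textbf{Main obstacle.} The most delicate estimate is \eqref{ODE-Phi-2}: the coefficient $-2/s$ of $q_{2,j,k}$ must be pinned down while simultaneously controlling the cross-term $V_{2,1}q_1$, whose pre-factor is $O(\Phi_2)=O(s^{-2})$ but whose $q_1$-factor enjoys only the larger bound $O(A^2(\ln s)s^{-2}(1+|y|^3))$ of Lemma \ref{lemma-estiam-q-1-2-in V-A}. The exponent $p_1+3$ on the right-hand side is exactly what comes out of this balance, and the sole purpose of the restriction $p_1\in(0,1)$ in Definition \ref{the-rhinking set} is to keep this cross contribution strictly subdominant with respect to the linear term $(2/s)q_{2,j,k}$ itself. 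Each of the other ODE and integral estimates in the statement follows a similar but easier pattern, always with strictly better exponents than those defining $V_A(s)$; this is what will allow the topological step of Proposition \ref{pro-existence-d-1-d-1} to go through.
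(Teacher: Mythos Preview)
Your proposal is correct and follows essentially the same route as the paper: projection onto the Hermite eigenfunctions (with the sharp computation of the $-2/s$ coefficient at the null mode) for items $(i)$--$(ii)$, and a Duhamel formula with the fundamental solution of $\mathcal{L}+V$, together with the Bricmont--Kupiainen/Merle--Zaag kernel estimates, for items $(iii)$ and $(v)$. One small imprecision: the cross-potentials $V_{1,2},V_{2,1}$ carry only one factor of $\Phi_2$, which is $O(s^{-1})$ in $L^\infty$ (pointwise $O((1+|y|^2)/s^2)$), not $O(s^{-2})$; this does not affect your argument since the Gaussian weight $\rho$ makes the projections behave as if $\Phi_2=O(s^{-2})$ anyway.
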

\begin{proof}
The proof of this Proposition  is  given  in two steps:

+ \textit{ Step 1: }  We will give a   proof   to items $(i)$ and $(ii)$ by using the projection the  equations which are  satisfied by $q_1 $ and $  q_2$.

+ \textit{ Step 2: } We will control the other components   by studying   the dynamics  of the linear operator $\mathcal{L}  + V $.

a) \textbf{Step 1: } We observe that the techniques of the   proof  for  \eqref{ODE-q-1-0-1}, \eqref{ODE-Phi-0-1}, \eqref{ODE-q-1-2} and \eqref{ODE-Phi-2} are the  same. So, we only deal  with the proof  of 
\eqref{ODE-q-1-2}. For each $j,k \leq n$ by using the  equation in \eqref{equation-satisfied-by-q-1-2}  and the definition of $q_{1,j,k}$ we deduce  that
\begin{equation}\label{in-ject-equa-q-1-h-j-k}
\left|q_{1,i,j}'(s)  -  \int \left[ \mathcal{L} q_1 + Vq_1   + B_1(q_1, q_2) + R_1(y,s) \right] \chi(y,s)  \left( \frac{y_i y_j }{4} - \frac{\delta_{i,j}}{2} \right) \rho dy  \right|  \leq C e^{-s},
\end{equation}
if $K$ is  large enough.  In addition to that,  using the fact   $(q_1,q_2) \in V_A(s)$   and    Lemma     \ref{lemma-estiam-q-1-2-in V-A},  Lemma \ref{lemmas-potentials}, Lemma  \ref{lemma-quadratic-term-B-1-2}, Lemma   \ref{lemma-rest-term-R-1-2}     that
\begin{eqnarray*}
\left|\int   \mathcal{L}(q)\chi  \left( \frac{y_i y_j }{4} - \frac{\delta_{i,j}}{2} \right)\rho dy  \right| &\leq & \frac{C}{s^3},\\
\left| \int   V q_1 \chi  \left( \frac{y_i y_j }{4} - \frac{\delta_{i,j}}{2} \right) \rho  dy  + \frac{2 }{s} q_{1,i,j}(s)\right| &\leq & \frac{C A}{s^3},\\
 \left|\int B_1(q_1,q_2) \chi  \left( \frac{y_i y_j }{4} - \frac{\delta_{i,j}}{2} \right) \rho dy \right| &\leq & \frac{C }{s^3},\\
\left|  \int  R_1(y,s) \chi  \left( \frac{y_i y_j }{4} - \frac{\delta_{i,j}}{2} \right) \rho  dy \right| &\leq & \frac{C}{s^3},
\end{eqnarray*}
if $s \geq s_4(A)$. Then, \eqref{ODE-q-1-2} is  derived  by adding all the   above   estimates.

\textbf{ Step 2:} In this part,  we will concentrate
 on  the  proof of    items $(iii)$ and $(iv)$. 
 We now rewrite \eqref{equation-satisfied-by-q-1-2} in its integral form: for each $s \geq \tau $
\begin{equation}\label{equationq-1-2-intergral}
\left\{   \begin{array}{rcl}
q_1(s)  &=& \mathcal{K}(s,\tau) q_1(\tau) + \int_{\tau}^s \mathcal{K}(s,\sigma) \left[ (V_{1,1} q_1 )(\sigma) +  ( V_{1,2} q_2) (\sigma) +   B_1(q_1,q_2)(\sigma)  + R_1(\sigma) \right] d \sigma  \\
&=&  \sum_{i=1}^5 \vartheta_{1,i}(s,\tau),\\
q_2(s) & =& \mathcal{K}(s,\tau) q_2(\tau) + \int_{\tau}^s \mathcal{K}(s,\sigma) \left[ (V_{2,1} q_1)(\sigma) +  (V_{2,2} q_2)(\sigma)  +  B_2(q_1,q_2)(\sigma)  + R_2(\sigma) \right] d \sigma \\
&=&    \sum_{i=1}^5 \vartheta_{2,i}(s,\tau).
\end{array} \right.
\end{equation}
where  $\{\mathcal{K}(s,\tau)\}_{s \geq \tau}$    is the fundamental  solution  associated to the linear operator  $\mathcal{L} + V$  and  defined by 
\begin{equation}\label{fundamental-sol}
\left\{ \begin{array}{l}
\partial_s \mathcal{K}(s,\tau)  = (\mathcal{L} + V)  \mathcal{K}(s,\tau),\quad  \forall   s > \tau,\\
\mathcal{K}(\tau,\tau) = Id. 
\end{array}
\right.
\end{equation}
Let us now introduce some notations:
\begin{align*}
\vartheta_{1,1}(s,\tau) &= \mathcal{K}(s,\tau) q_1(\tau), \quad \vartheta_{1,2}(s,\tau) = \int_{\tau}^s \mathcal{K}(s,\sigma) (V_{1,1} q_1)(\sigma)  d \sigma, \quad  \vartheta_{1,3}(s,\tau) = \int_{\tau}^s \mathcal{K}(s,\sigma)  ( V_{1,2} q_2)(\sigma)   d \sigma,\\
 \vartheta_{1,4}(s,\tau)   & = \int_{\tau}^s \mathcal{K}(s,\sigma)  ( B_1 (q_1,q_2))(\sigma)   d \sigma, \quad  \vartheta_{1,5} =  \int_{\tau}^s \mathcal{K}(s,\sigma)  ( R_1 (., \sigma)) d \sigma,
\end{align*}
and 
\begin{align*}
\vartheta_{2,1}(s,\tau) &= \mathcal{K}(s,\tau) (q_2(\tau)), \quad \vartheta_{2,2}(s,\tau) = \int_{\tau}^s \mathcal{K}(s,\sigma) (V_{2,1} q_1)(\sigma)  d \sigma, \quad  \vartheta_{2,3}(s,\tau) = \int_{\tau}^s \mathcal{K}(s,\sigma)  ( V_{2,2} q_2)(\sigma)   d \sigma,\\
 \vartheta_{2,4}(s,\tau)   & = \int_{\tau}^s \mathcal{K}(s,\sigma)  ( B_2 (q_1,q_2))(\sigma)   d \sigma, \quad  \vartheta_{2,5}  = \int_{\tau}^s \mathcal{K}(s,\sigma)  ( R_2 (., \sigma)) d \sigma.
\end{align*}
From \eqref{equationq-1-2-intergral}, we can see   the strong influence of the  kernel  $\mathcal{K}.$  For that   reason, we will study the dynamics of that operator:    
\begin{lemma}[A priori estimates of the linearized operator]\label{dynamic-K-feym}For all $\rho^* \geq 0$, there exists $s_5(\rho^*)  \geq 1$, such that if $\sigma \geq s_5(\rho^*)$ and $v \in L^{2}_{\rho}$ satisfies 
\begin{equation}\label{condition-q-sigma}
\sum_{m=0}^2 |v_m| + \left\|\frac{v_-}{1 + |y|^3}\right\|_{L^{\infty}}   +\|v_e\|_{L^{\infty}} < \infty,
\end{equation} 
then, for all $ s \in [\sigma, \sigma + \rho^*],$  the function $\theta(s) = \mathcal{K}(s,\sigma) v $ satisfies 
\begin{equation}\label{control-K-q-sigma1}
\begin{array}{l}
\left\|\frac{\theta_-(y,s)}{1 + |y|^3}\right\|_{L^{\infty}} \leq \frac{C e^{s -\sigma} \left( (s - \sigma)^2   +1 \right)}{s} \left(  |v_0|  + |v_1|  + \sqrt s |v_2|\right)\\
\quad \quad \quad \quad \quad + C e^{-\frac{(s-\sigma)}{2}} \left\|\frac{v_-}{1 + |y|^3}\right\|_{L^{\infty}} + C \frac{e^{-(s-\sigma)^2}  }{s^{\frac{3}{2}}} \|v_e\|_{L^{\infty}},
\end{array}
\end{equation}
and
\begin{equation}\label{control-K-q-e}
\|\theta_e(y,s)\|_{L^{\infty}} \leq  C e^{s -\sigma} \left(  \sum_{l=0}^2 s^{\frac{l}{2}} |v_l|    +s^{\frac{3}{2}} \left\|\frac{v_-}{1  +|y|^3}\right\|_{L^{\infty}}\right) + C e^{-\frac{s -\sigma}{p}} \|v_e\|_{L^{\infty}}.
\end{equation}
\end{lemma}
\begin{proof} The proof of this result was given by Bricmont and Kupiainen \cite{BKnon94} in the one dimensional case.  Later, it was  extended to the  higher dimensional case  by Nguyen and Zaag  \cite{NZens16}. We kindly refer interested readers to Lemma 2.9 in \cite{NZens16} for  details of the proof.
\end{proof}

We now use Lemmas  \ref{dynamic-K-feym}, \ref{lemma-estiam-q-1-2-in V-A}, \ref{lemmas-potentials},   \ref{lemma-quadratic-term-B-1-2} and \ref{lemma-rest-term-R-1-2}   to deduce the following Lemma which implies Proposition \ref{prop-dynamic-q-1-2-alpha-beta}.
\begin{lemma}\label{control-prin-q-e-q-}
For all $A \geq 1, \rho^* \geq 0$,  there exists $s_6(A, \rho^*) \geq 1$ such that $\forall s_0 \geq s_6(A,\rho^*)$ and $q(s) \in S_A(s), \forall s \in [\tau, \tau + \rho^*] \text{ where } \tau \geq s_0$. Then, we have the following properties: for all $ s \in [\tau, \tau + \rho^*] $,
\begin{itemize}
\item[$i)$] (The linear term $\vartheta_{1,1}(s,\tau)$ and $\vartheta_{2,1}(s,\tau)$)
\begin{eqnarray*}
\left\|\frac{(\vartheta_{1,1}(s,\tau))_-}{1 + |y|^3} \right\|_{L^{\infty}} &\leq & C e^{- \frac{s - \tau}{2}}  \left\|\frac{q_{1,-}(., \tau)}{1 + |y|^3} \right\|_{L^{\infty}}  +  \frac{C e^{- (s-\tau)^2}}{s^{\frac{3}{2}}} \|q_{1,e} (\tau)\|_{L^\infty}   + \frac{C }{s^{2}} ,\\
\| (\vartheta_{1,1}(s,\tau))_e\|_{L^{\infty}} &\leq & C e^{- \frac{s - \tau }{p}} \|q_{1,e} (\tau)\|_{L^\infty}  +   C e^{s- \tau} s^{\frac{3}{2}} \left\|\frac{q_{1,-}(., \tau)}{1 + |y|^3} \right\|_{L^{\infty}}  + \frac{C }{ \sqrt s},\\
\left\|\frac{(\vartheta_{2,1}(s,\tau))_-}{1 + |y|^3} \right\|_{L^{\infty}} &\leq & C e^{- \frac{s - \tau}{2}}  \left\|\frac{q_{2,-}(., \tau)}{1 + |y|^3} \right\|_{L^{\infty}}  +  \frac{C e^{- (s-\tau)^2}}{s^{\frac{3}{2}}} \|q_{2,e} (\tau)\|   + \frac{C }{s^{\frac{p_1+ 5}{2}}} ,\\
\| (\vartheta_{2,1}(s,\tau))_e\|_{L^{\infty}} &\leq & C e^{- \frac{s - \tau }{p}} \|q_{2,e} (\tau)\|_{L^\infty}  +   C e^{s- \tau} s^{\frac{3}{2}} \left\|\frac{q_{2,-}(., \tau)}{1 + |y|^3} \right\|_{L^{\infty}}  + \frac{C }{ s^{\frac{p_1 + 2}{2}}}.
\end{eqnarray*}
\item[$ii)$] The quadratic term $\vartheta_{1,2}(s,\tau)$ and $\vartheta_{2,2}(s,\tau)$
\begin{eqnarray*}
\left\|\frac{(\vartheta_{1,2}(s,\tau))_-}{1 + |y|^3} \right\|_{L^{\infty}} &\leq & \frac{C(s - \tau)}{ s^{2 }}, \quad \| (\vartheta_{1,2}(s,\tau))_e\|_{L^{\infty}} \leq  \frac{C (s - \tau)}{ s^{\frac{1}{2}  }},\\
\left\|\frac{(\vartheta_{2,2}(s,\tau))_-}{1 + |y|^3} \right\|_{L^{\infty}} &\leq & \frac{C(s - \tau)}{ s^{ \frac{p_1 + 5}{2} }}, \quad \| (\vartheta_{2,2}(s,\tau))_e\|_{L^{\infty}} \leq  \frac{C (s - \tau)}{ s^{\frac{p_1 + 2}{2}  }}.
\end{eqnarray*}
 \item[$iii)$] The correction terms $\vartheta_{1,3}(s,\tau) $ and  $\vartheta_{2,3}(s,\tau) $ 
\begin{eqnarray*}
\left\|\frac{(\vartheta_{1,3}(s,\tau))_-}{1 + |y|^3} \right\|_{L^{\infty}} &\leq & \frac{C (s  - \tau) }{ s^{2}}, \quad \| (\vartheta_{1,3}(s,\tau))_e\|_{L^{\infty}} \leq  \frac{C (s  -\tau ) }{ s^{\frac{1}{2} }},\\
\left\|\frac{(\vartheta_{2,3}(s,\tau))_-}{1 + |y|^3} \right\|_{L^{\infty}} &\leq & \frac{C (s  - \tau) }{ s^{\frac{p_1+5}{2}}}, \quad \| (\vartheta_{2,3}(s,\tau))_e\|_{L^{\infty}} \leq  \frac{C (s  -\tau ) }{ s^{\frac{p_1+2}{2} }}.
\end{eqnarray*}
\item[$iv)$] The correction terms $\vartheta_{1,4}(s,\tau) $ and  $\vartheta_{2,4}(s,\tau) $ 
\begin{eqnarray*}
\left\|\frac{(\vartheta_{1,3}(s,\tau))_-}{1 + |y|^3} \right\|_{L^{\infty}} &\leq & \frac{C (s  - \tau) }{ s^{2}}, \quad \| (\vartheta_{1,3}(s,\tau))_e\|_{L^{\infty}} \leq  \frac{C (s  -\tau ) }{ s^{\frac{1}{2} }},\\
\left\|\frac{(\vartheta_{2,3}(s,\tau))_-}{1 + |y|^3} \right\|_{L^{\infty}} &\leq & \frac{C (s  - \tau) }{ s^{\frac{p_1+5}{2}}}, \quad \| (\vartheta_{2,3}(s,\tau))_e\|_{L^{\infty}} \leq  \frac{C (s  -\tau ) }{ s^{\frac{p_1+2}{2} }}.
\end{eqnarray*}
\item[$v)$] The correction terms $\vartheta_{1,5}(s,\tau) $ and  $\vartheta_{2,5}(s,\tau) $ 
\begin{eqnarray*}
\left\|\frac{(\vartheta_{1,3}(s,\tau))_-}{1 + |y|^3} \right\|_{L^{\infty}} &\leq & \frac{C (s  - \tau) }{ s^{2}}, \quad \| (\vartheta_{1,3}(s,\tau))_e\|_{L^{\infty}} \leq  \frac{C (s  -\tau ) }{ s^{\frac{1}{2} }},\\
\left\|\frac{(\vartheta_{2,3}(s,\tau))_-}{1 + |y|^3} \right\|_{L^{\infty}} &\leq & \frac{C (s  - \tau) }{ s^{\frac{p_1+5}{2}}}, \quad \| (\vartheta_{2,3}(s,\tau))_e\|_{L^{\infty}} \leq  \frac{C (s  -\tau ) }{ s^{\frac{p_1+2}{2} }}.
\end{eqnarray*}
\end{itemize}
\end{lemma} 
\begin{proof} 
 The result is implied from  the definition of  the shrinking set $V_A(s) $ and  Lemma  \ref{lemma-estiam-q-1-2-in V-A}   and  the bounds for $V, V_{j,k},  B_1, B_2, R_1, R_2$  with $j ,k  \in \{1,2\}$  which are shown in Lemmas   \ref{lemmas-potentials},   \ref{lemma-quadratic-term-B-1-2} and \ref{lemma-rest-term-R-1-2}.  For details  in a  quite  similar case,  see Lemma 4.20 in Tayachi and Zaag \cite{TZpre15}.
\end{proof}
Finally,    the  conclusion   of $(iii)$ and $(iv)$ of Proposition \ref{prop-dynamic-q-1-2-alpha-beta} follows by  using   formular   \eqref{equationq-1-2-intergral}  and    Lemma \eqref{control-prin-q-e-q-}. This concludes  the proof of  Proposition \ref{prop-dynamic-q-1-2-alpha-beta}.
\end{proof}

\subsection{ Conclusion of the proof of  Proposition \ref{pro-reduction- to-finit-dimensional}}
In this subsection, we will give prove a Proposition which implies Proposition  \ref{pro-reduction- to-finit-dimensional} directly. More precisely,  this is our statement:
\begin{proposition}\label{control-q(s)-V-A-s-1-2}
There exists $A_7\geq 1$ such that for all  $ A \geq A_7$, there exists $s_7(A)\geq 1$ such that for all $s_0 \geq s_7(A)$, we have the following properties: If the following conditions hold:
\begin{itemize}
\item[$a)$] $(q_1,q_2)(s_0) = (\phi_1,\phi_2)  $ with   $(d_0,d_1) \in \mathcal{D}_{A,s_0}$,
\item[$b)$] For all $s \in [s_0,s_1]$ we have  $(q_1,q_2)(s) \in V_A(s)$.
\end{itemize}
Then for all $s \in [s_0,s_1]$, we have
\begin{eqnarray}
 \forall i,j \in \{1, \cdots, n\}, \quad |q_{2,i,j}(s)| & \leq & \frac{A^2 \ln s}{2 s^2}, \label{conq_1-2} \\
\left\| \frac{q_{1,-}(y,s)}{1 + |y|^3}\right\|_{L^{\infty}} &\leq &  \frac{A}{2 s^{2}}, \quad \|q_{1,e}(s)\|_{L^{\infty}} \leq  \frac{A^2}{2 \sqrt s}, \label{conq-q-1--and-e}\\
\left\| \frac{q_{2,-}(y,s)}{1 + |y|^3}\right\|_{L^{\infty}}  &\leq &  \frac{A^2}{2 s^{\frac{p_1 + 5}{2}}},\quad \|q_{2,e}(s)\|_{L^{\infty}} \leq  \frac{A^3}{2 s^{\frac{p_1 + 2}{2}}}. \label{conq-q-2--and-e}
\end{eqnarray}
where $\mathcal{D}_{A,s_0}$ is introduced in Lemma \ref{lemma-control-initial-data} and $(\phi_1,\phi_2)$ is defined as in Definition \eqref{initial-data-profile-complex}.
\end{proposition}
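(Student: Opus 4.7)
Proposition~\ref{control-q(s)-V-A-s-1-2} is the bootstrap step of the Merle--Zaag reduction: under the standing hypothesis $(q_1,q_2)(s)\in V_A(s)$ on $[s_0,s_1]$, we upgrade the five ``infinite-dimensional'' components $q_{1,j,k}, q_{1,-}, q_{1,e}, q_{2,-}, q_{2,e}$ to bounds strictly smaller than those defining $V_A$ (by a factor of $1/2$), while leaving the finite-dimensional modes $q_{1,0}, (q_{1,j}), q_{2,0}, (q_{2,j}), (q_{2,j,k})$ untouched, so that the topological shooting argument already set up in the proof of Proposition~\ref{pro-existence-d-1-d-1} can take care of them. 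Parameters are fixed in the order $\rho^{*}$, then $A$, then $s_0$, all large. The starting point is Proposition~\ref{prop-dynamic-q-1-2-alpha-beta}: the ODE~\eqref{ODE-q-1-2} for the null mode, and the integral bounds \eqref{Estimata-q-1--}--\eqref{outer-Phi-e} for the other four components.

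\textbf{Null mode $q_{1,j,k}$.} From \eqref{ODE-q-1-2} one gets $|(s^{2}q_{1,j,k})'(s)|\leq CA/s$. Integrating from $s_0$ to $s$ and inserting the improved initial bound $|q_{1,j,k}(s_0)|\leq A^{2}\ln(s_0)/(2s_0^{2})$ from Lemma~\ref{lemma-control-initial-data}(ii) yields
\[
s^{2}|q_{1,j,k}(s)|\;\leq\;\tfrac{A^{2}}{2}\ln s_0 + CA\ln(s/s_0)\;\leq\;\tfrac{A^{2}}{2}\ln s,
\]
provided $A\geq 2C$, which is \eqref{conq_1-2}. The four remaining components follow a common template: split on whether $s-s_0\geq \rho^*$ or $s-s_0<\rho^*$. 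In the long-time regime I set $\tau=s-\rho^*$ in the corresponding integral estimate; since $(q_1,q_2)(\tau)\in V_A(\tau)$ and $\tau\sim s$, the leading prefactor $Ce^{-\rho^*/2}$ (respectively $Ce^{-\rho^*/p}$ for the outer estimate) reduces the $V_A$ contribution below one quarter of the target once $\rho^{*}$ is large, the Gaussian cross-term is negligible, and the source term $C\rho^*/s^{2}$ (respectively its analogues) is dominated for $A$ large. The outer estimate carries an additional growth factor $e^{\rho^*}$ on the cross-coupling with the negative part, but this is absorbed by requiring $A\geq 8Ce^{\rho^{*}}$. In the short-time regime I set $\tau=s_0$; crucially, $q_{1,e}(s_0)=q_{2,e}(s_0)=0$ (Lemma~\ref{lemma-control-initial-data}(ii)), so the dangerous cross-term in \eqref{outer-Q-e}--\eqref{outer-Phi-e} vanishes, while for the negative component the initial bound $A/(2s_0^{2})$ is already half of what $V_A$ would allow.

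\textbf{Main obstacle.} The genuine tight step is the short-time case for $q_{1,-}$ and $q_{2,-}$: the prefactor $Ce^{-(s-s_0)/2}$ in~\eqref{Estimata-q-1--} equals $C$ at $s=s_0$, so the factor $1/2$ built into the initial data is used without much room to spare. Closing the argument requires a careful reading of Lemma~\ref{dynamic-K-feym}, showing that the norm of $\mathcal{K}(s,\tau)$ on the strictly negative subspace of $\mathcal{L}+V$ can be taken arbitrarily close to $e^{-(s-\tau)/2}$ (i.e.~$C\to 1$) as $s_0\to\infty$, using the fact that $V(\cdot,s)\to 0$ locally uniformly. Combined with the ratio $(s_0/s)^{2}\to 1$ uniform on $s-s_0\leq \rho^{*}$, this closes the five strict inequalities \eqref{conq_1-2}--\eqref{conq-q-2--and-e} and proves Proposition~\ref{control-q(s)-V-A-s-1-2}. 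Items~(i) and~(ii) of Proposition~\ref{pro-reduction- to-finit-dimensional} then follow, the first by the very definition of $V_{A}(s)$, the second by computing the outward-pointing sign of the rescaled finite-dimensional modes from \eqref{ODE-q-1-0-1}--\eqref{ODE-Phi-2}.
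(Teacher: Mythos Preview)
Your overall strategy is correct and matches the paper's: it is precisely the Merle--Zaag scheme the paper cites. Your treatment of the null mode $q_{1,j,k}$ via integrating $(s^2 q_{1,j,k})'$ is in fact cleaner than the paper's own sketch, and the two-regime split ($s-s_0\ge\rho^*$ versus $s-s_0<\rho^*$) for the four remaining components is exactly what Proposition~4.7 of \cite{MZdm97} does.

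The gap is in your resolution of the short-time obstacle for $q_{1,-}$ (and $q_{2,-}$). You correctly spot that using only $\|q_{1,-}(s_0)/(1+|y|^3)\|_{L^\infty}\le A/(2s_0^2)$ from Lemma~\ref{lemma-control-initial-data}(ii) together with \eqref{Estimata-q-1--} at $\tau=s_0$ leaves you with $C\cdot \frac{A}{2s_0^2}$, which does not close if $C>1$. But your proposed fix --- arguing that the constant $C$ in Lemma~\ref{dynamic-K-feym} can be pushed to $1$ as $s_0\to\infty$ --- is not how this is done and is unlikely to hold: $V$ does not tend to zero uniformly in $y$ (it tends to $-p/(p-1)$ at spatial infinity), and the decomposition $(\cdot)_-$ is defined through the cutoff $\chi$, so a clean perturbative statement ``$\mathcal K\approx e^{(s-\tau)\mathcal L}$ on the negative subspace'' is not available.

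The correct resolution is that the bound $A/(2s_0^2)$ in Lemma~\ref{lemma-control-initial-data}(ii) is a vast overstatement kept only for convenience. Since $\phi_1(\cdot,s_0)=\frac{A}{s_0^2}(d_{1,0}+d_{1,1}\cdot y)\chi(2y,s_0)$ is a degree-one polynomial times a cutoff equal to $1$ on $|y|\le \tfrac{K}{2}\sqrt{s_0}$, its negative part (projection of $q_{1,b}$ onto eigenvalues $\le -1/2$) is supported in $|y|\gtrsim\sqrt{s_0}$, and one gets
\[
\left\|\frac{q_{1,-}(\cdot,s_0)}{1+|y|^3}\right\|_{L^\infty}\;\le\;\frac{CA}{s_0^{3}},
\]
an extra factor $1/s_0$ better than stated. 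Plugging this into \eqref{Estimata-q-1--} with $\tau=s_0$, $q_{1,e}(s_0)=0$ and $s-s_0\le\rho^*$ gives a right-hand side bounded by $C(A/s_0^{3}+(1+\rho^*)/s^2)$, which is $\le A/(2s^2)$ once $A\ge A(\rho^*)$ and $s_0\ge s_0(A,\rho^*)$. The same mechanism handles $q_{2,-}$. With this correction your argument closes exactly as in the paper.
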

\begin{proof}
The proof relies  on  Propostion  \ref{prop-dynamic-q-1-2-alpha-beta} and  details are similar to   Proposition 4.7 of Merle and Zaag \cite{MZdm97}. For that reason, we only give a short proof to \eqref{conq_1-2}. We use \eqref{ODE-q-1-2} to deduce that  
$$ \left|  \int_{s_0}^s (\tau^2q_{j,k} (\tau))d \tau   \right|  \leq CA ( \ln (s) -   \ln (s_0)),  $$
which yields 
$$  |q_{1,j,k} (s)| \leq C A s^{-2}\ln s \leq \frac{A^2 \ln s}{2 s^2 },$$
if $A \geq  A_7$ large enough and $s \geq s_7 (A)$. Then,    \eqref{conq_1-2}  follows.
\end{proof}
We here give the conclusion of the  proof of    Proposition  \ref{pro-reduction- to-finit-dimensional}:
\begin{proof}
From Proposition  \ref{control-q(s)-V-A-s-1-2},  if $(q_1,q_2)(s_1)  \in  \partial V_A(s_1)$  then: 
\begin{equation}\label{q-1-0-j-k-in-partial-hat-V-A-s-1}
\left(q_{1,0}, (q_{1,j})_{1 \leq  j\leq n}, q_{2,0},(q_{2,j})_{1 \leq  j\leq n},(q_{2,j,k})_{1 \leq j,k \leq n}\right)(s_1) \in  \partial \hat V_A (s_1). 
\end{equation}
This concludes item $(i)$ of   Proposition \ref{pro-reduction- to-finit-dimensional}.

\noindent
\text{The proof of item $(ii)$ of  Proposition \ref{pro-reduction- to-finit-dimensional}.}  Thanks to  \eqref{q-1-0-j-k-in-partial-hat-V-A-s-1}, we derive two the following cases:

+ The first case: There  exists $j_0  \in \{ 1,...,n\}$ and $\epsilon_0 \in \{-1, 1\}$ such that either  $q_{1,0} (s_1) = \epsilon_0 \frac{A}{s_1^2}$ or $ q_{1,j_0} = \epsilon_0 \frac{A}{s_1^2}$ or $q_{2,0} = \epsilon_0\frac{A^2}{s_1^{p_1+ 2}}$ or $q_{2,j_0} (s_1) = \epsilon_0 \frac{A^2}{s_1^{p_1 + 2}}$. Without loss of generality, we can suppose that $ q_{1,0} = \epsilon_0 \frac{A}{s_1^2} $ (the other cases  are similar). Then,  by using    \eqref{ODE-q-1-0-1}, we can prove that the sign of $q_{1,0}'(s_1)$  is oppsite to the sign of $\left( \epsilon_0 \frac{A}{s_1^2}\right)'$. In other words,
 $$ \epsilon_0  \left( q_{1,0} - \epsilon_0 \frac{A}{s^{2}}  \right)'(s_1) > 0. $$

+ The second case: There exists $j_0, k_0 , \epsilon_0 \in {-1,1}$  such that $q_{2,j_0,k_0} (s_1) = \epsilon_0 \frac{A^2}{s_1^{p_1+2}}$, by using  \eqref{ODE-Phi-2} we can prove that
$$ \epsilon_0  \left( q_{2,j_0,k_0} - \epsilon_0 \frac{A^2}{s^{p_1 + 2}}  \right)'(s_1) > 0. $$

Finally,  we deduce that there exists $\delta_0 > 0$ such that for all $\delta \in (0, \delta_0)$ we have
$$\left(q_{1,0}, (q_{1,j})_{1 \leq  j\leq n}, q_{2,0},(q_{2,j})_{1 \leq  j\leq n},(q_{2,j,k})_{1 \leq j,k \leq n}\right)(s_1 + \delta ) \notin   \hat V_A (s_1 + \delta ). $$
if $A \geq A_3$ and $s_0 \geq s_3(A)$ large enough. Then, the item $(ii)$  of Proposition    follows. Hence, we also derive the conclusion of Proposition \ref{pro-reduction- to-finit-dimensional}.
\end{proof}

\appendix
 \section{Appendix}
 In this appendix, we  state and prove several  technical  and  and straightforward results need in our paper.
 
 \medskip
 We first give a Taylor expansion of the  quadratic terms defined in  \eqref{defini-bar-B-1} and  \eqref{defini-bar-B-2}.
\begin{lemma}[Asymptotics  of $\bar B_1$ and $ \bar B_2 $]\label{asymptotic-bar-B-1-2}
We consider  $\bar B_1 (\bar w_1, w_2)$  and $  \bar B_2 ( \bar w_1, w_2)$   as defined  in \eqref{defini-bar-B-1} and  \eqref{defini-bar-B-2}. Then, the following holds
 \begin{eqnarray}
\bar B_1 (\bar w_1, w_2) &=& \frac{p}{2 \kappa} \bar w^2_1  + O (|\bar w_1|^3 + |w_2|^2)\label{asymptotic-bar-B-1},\\
\bar B_2( \bar w_1, w_2) &=& \frac{p}{ \kappa} \bar w_1 w_2  + O \left( |\bar w_1|^2 |w_2| \right) + O  \left( |w_2|^3 \right),\label{asymptotic-bar-B-2}                                      
\end{eqnarray}
as $(\bar w_1, w_2) \to (0,0)$.
\end{lemma}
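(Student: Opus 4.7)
The strategy is to exploit the complex identity $F_1(u_1,u_2) + i F_2(u_1,u_2) = (u_1 + i u_2)^p$ and Taylor expand to second order around $u_1 = \kappa$, $u_2 = 0$, then separate real and imaginary parts.

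First I would substitute $u_1 = \kappa + \bar w_1$, $u_2 = w_2$ and write
\begin{equation*}
(\kappa + \bar w_1 + i w_2)^p = \kappa^p + p\kappa^{p-1}(\bar w_1 + i w_2) + \tfrac{p(p-1)}{2}\kappa^{p-2}(\bar w_1 + i w_2)^2 + R(\bar w_1, w_2),
\end{equation*}
where $R(\bar w_1, w_2) = O(|\bar w_1 + i w_2|^3)$ is a polynomial in $\bar w_1, w_2$ whose monomials are all of total degree $\geq 3$ (this is where $p \in \mathbb{N}$ is used: the expansion is finite and exact). Using the key identity $\kappa^{p-1} = 1/(p-1)$, one gets $p\kappa^{p-1} = p/(p-1)$ and $\tfrac{p(p-1)}{2}\kappa^{p-2} = p/(2\kappa)$.

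Next I would separate real and imaginary parts using $(\bar w_1 + i w_2)^2 = \bar w_1^2 - w_2^2 + 2i\bar w_1 w_2$, obtaining
\begin{align*}
F_1(\kappa + \bar w_1, w_2) &= \kappa^p + \tfrac{p}{p-1}\bar w_1 + \tfrac{p}{2\kappa}\bar w_1^2 - \tfrac{p}{2\kappa} w_2^2 + \textup{Re}\,R, \\
F_2(\kappa + \bar w_1, w_2) &= \tfrac{p}{p-1}w_2 + \tfrac{p}{\kappa}\bar w_1 w_2 + \textup{Im}\,R.
\end{align*}
Subtracting the constant and linear parts according to \eqref{defini-bar-B-1}--\eqref{defini-bar-B-2} yields
\begin{equation*}
\bar B_1 = \tfrac{p}{2\kappa}\bar w_1^2 - \tfrac{p}{2\kappa}w_2^2 + \textup{Re}\,R, \qquad \bar B_2 = \tfrac{p}{\kappa}\bar w_1 w_2 + \textup{Im}\,R,
\end{equation*}
and since $\textup{Re}\,R = O(|\bar w_1|^3 + |w_2|^3)$, the bound \eqref{asymptotic-bar-B-1} follows by absorbing $|w_2|^3$ into $|w_2|^2$ for $|w_2|$ small.

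The only mildly delicate point is \eqref{asymptotic-bar-B-2}, where I need the sharper error $O(|\bar w_1|^2 |w_2|) + O(|w_2|^3)$ instead of the naive $O(|\bar w_1|^3 + |w_2|^3)$. This follows from the structural observation that every monomial in $F_2(u_1,u_2) = \textup{Im}(u_1+iu_2)^p$, as displayed in \eqref{defi-mathbb-A-1-2}, carries at least one factor of $u_2$; consequently $\textup{Im}\,R$ is divisible by $w_2$, so its cubic terms are of the form $\bar w_1^2 w_2$, $\bar w_1 w_2^2$, $w_2^3$. Applying $|\bar w_1||w_2|^2 \leq \tfrac12(|\bar w_1|^2|w_2| + |w_2|^3)$ absorbs the mixed term and delivers \eqref{asymptotic-bar-B-2}. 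The main (and only) thing to watch is this divisibility-by-$w_2$ argument; everything else is a direct binomial expansion.
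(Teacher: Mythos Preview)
Your proof is correct and follows essentially the same approach as the paper: a binomial expansion of $(\kappa + \bar w_1 + i w_2)^p$ followed by separation of real and imaginary parts. The only organizational difference is that the paper expands first in powers of $iw_2$ (writing $(\kappa+\bar w_1+iw_2)^p = (\kappa+\bar w_1)^p + ip(\kappa+\bar w_1)^{p-1}w_2 + \cdots + G$ with $|G|\le C|w_2|^3$) and then expands the coefficients $(\kappa+\bar w_1)^{p-j}$ in $\bar w_1$; this makes the sharp remainder $O(|\bar w_1|^2|w_2|)+O(|w_2|^3)$ for $\bar B_2$ automatic, whereas you expand jointly in $(\bar w_1+iw_2)$ and recover the same bound via your divisibility-by-$w_2$ observation. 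Both routes are equally valid.
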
 
\begin{proof}    Using the  Newton  binomial formula (remember that  $p \in \mathbb{N}$), we derive that:     
$$ ( \bar w_1  + \kappa + i w_2)^p = ( \bar w_1   +  \kappa)^p + i p ( \bar w_1  + \kappa)^{p - 1}  w_2  +  p (p - 1)  ( \bar w_1  + \kappa)^{p - 2}  w_2^2    +    G (\bar w_1, w_2), $$
with 
$$ \left| G (\bar w_1, w_2)  \right|  \leq C | w_2|^3, \quad \forall    |\bar w_1|  + |w_2| \leq 1.$$
Then,  
\begin{eqnarray}
\text{  Re } \left(  (\bar w_1  + \kappa + i w_2  )^{p}  \right)  &=&  ( \bar w_1   +  \kappa)^p  +   p (p - 1)  ( \bar w_1  + \kappa)^{p - 2}  w_2^2  +  \text{ Re } (G) ,\label{real-part-w-1-w_--2-bar}\\
\text{  Im } \left(  (\bar w_1  + \kappa + i w_2  )^{p}  \right)  &=&   p (\bar w_1 + \kappa)^{p-1} w_2 +    \text{ Im } (G). \label{imaginary-part-w-1-w_--2-bar}
\end{eqnarray}
Moreover,  we apply  again  the Newton  binomial formula  to $ (\kappa   +  \bar w_1)^{p}, (\kappa   +  \bar w_1)^{p-1}$   around  $\bar w_1   = 0$ and we get 
\begin{eqnarray}
(\kappa   +  \bar w_1)^{p}   & =  &     \kappa^p   + \frac{p}{p-1}  \bar w_1  + \frac{p}{ 2 \kappa} \bar w_1^2 +  O (|\bar w_1 |^3), \label{expansion-Taylor-kappa+w_1-p}\\
 (\kappa   +  \bar w_1)^{p - 1}   & = &     \frac{1}{ p-1} +  \frac{1}{\kappa} \bar w_1 + O ( |\bar w_1|^2)\label{expansion-Taylor-kappa+w_1-p-1} .
\end{eqnarray}
  Then,   \eqref{asymptotic-bar-B-1}   follows by    \eqref{real-part-w-1-w_--2-bar} and  \eqref{expansion-Taylor-kappa+w_1-p}   and   \eqref{asymptotic-bar-B-2}  follows by  \eqref{imaginary-part-w-1-w_--2-bar} and \eqref{expansion-Taylor-kappa+w_1-p-1}.
\end{proof}
Now, we give an expansion of the potentials defined in \eqref{defini-potentian-V}  and  \eqref{defini-V-1-1} - \eqref{defini-V-2-2}. The following is our statement:
\begin{lemma}[The potential functions $V$ and  $ V_{j,k} $ with $ j,k \in \{1,n\}$]\label{lemmas-potentials}  We consider  $ V , V_{1,1},V_{1,2},V_{2,1} $ and $ V_{2,2}$ as  defined  in \eqref{defini-potentian-V}  and  \eqref{defini-V-1-1} - \eqref{defini-V-2-2}. Then, the following holds:

$(i)$ For all $s \geq 1 $ and  $y \in \mathbb{R}^n$,  we have  $| V(y,s)| \leq C,$ 
   \begin{equation}\label{esti-y-R-n-V-1-1-and-22}
 \left| V (y,s)\right|  \leq \frac{C (1 + |y|^2)}{s},
 \end{equation}
 and 
 \begin{equation}\label{the-potential-V-inside-blowup-region}
	V(y,s) = - \frac{(|y|^2 -2 n  )}{4 s} + \tilde V(y,s),
\end{equation}
where $\tilde V$ satisfies 
\begin{equation}\label{defini-tilde V.}
|\tilde V (y,s)| \leq C \frac{(1 + |y|^4)}{s^2}, \forall s \geq 1, |y| \leq 2 K \sqrt s. 
\end{equation}
 
 $(ii)$ For all $s \geq 1$ and  $y \in \mathbb{R}^n,$ the  potential functions  $V_{j,k}$ with $j,k \in \{ 1,2\}$ satisfy
 \begin{eqnarray*}
 \|V_{1,1} \|_{L^\infty}  + \| V_{2,2}\|_{L^\infty}  &\leq &  \frac{C}{s^2},\\
  \|V_{1,2} \|_{L^\infty}  + \| V_{2,1}\|_{L^\infty}  &\leq &  \frac{C}{s},\\
 \left|  V_{1,1} (y,s) \right|  + \left| V_{2,2} (y,s) \right| &\leq & \frac{C (1 + |y|^4)}{s^4},\\ 
 \left|  V_{1,2} (y,s) \right|  + \left| V_{2,1} (y,s) \right| &\leq & \frac{C (1 + |y|^2)}{s^2}.
 \end{eqnarray*}
\end{lemma}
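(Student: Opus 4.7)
\medskip
\noindent\textbf{Proof proposal.} The plan is to read both estimates directly off the explicit formulas \eqref{defi-Phi-1}--\eqref{defi-Phi-2} by computing pointwise bounds on $\Phi_1$ and $\Phi_2$, then plugging these into the algebraic expressions \eqref{defini-potentian-V}, \eqref{defini-V-1-1}--\eqref{defini-V-2-2}. Introduce the shorthand $z=|y|^2/s$ and recall $b=(p-1)^2/(4p)$ so that
$$ \Phi_1(y,s) = (p-1+bz)^{-\frac{1}{p-1}} + \frac{n\kappa}{2ps},\qquad \Phi_2(y,s)=\frac{z}{s}(p-1+bz)^{-\frac{p}{p-1}}-\frac{2n\kappa}{(p-1)s^2}. $$
First I will check two uniform facts that will be used throughout: $\|\Phi_1\|_{L^\infty}\leq C$ (since $(p-1+bz)^{-1/(p-1)}$ is monotone in $z\geq 0$ and bounded by $\kappa$), and the pointwise bound $|\Phi_2(y,s)|\leq C(1+|y|^2)/s^2$ for all $(y,s)\in\mathbb{R}^n\times[1,\infty)$, whose proof splits into $z\leq 1$ (direct Taylor of $z\mapsto z(p-1+bz)^{-p/(p-1)}$ at $0$) and $z\geq 1$ (the function $z\mapsto z(p-1+bz)^{-p/(p-1)}$ is bounded, so $|\Phi_2|\leq C/s\leq C|y|^2/s^2$). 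In particular $\|\Phi_2\|_{L^\infty}\leq C/s$.

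\medskip
For part $(i)$, the bound $|V|\leq C$ is immediate from $V=p(\Phi_1^{p-1}-\kappa^{p-1})$ and $\|\Phi_1\|_\infty\leq C$. For $|V(y,s)|\leq C(1+|y|^2)/s$ I will use the elementary inequality $|a^{p-1}-b^{p-1}|\leq C(|a|+|b|)^{p-2}|a-b|$ together with the pointwise estimate $|\Phi_1-\kappa|\leq C(1+|y|^2)/s$, which itself follows by a mean value argument on $z\mapsto(p-1+bz)^{-1/(p-1)}$ plus the explicit $(2ps)^{-1}$ correction. For the refined expansion \eqref{the-potential-V-inside-blowup-region}, restrict to $|y|\leq 2K\sqrt{s}$ so that $z\leq 4K^2$ is bounded, set $\Phi_1^{(0)}:=(p-1+bz)^{-1/(p-1)}$, and Newton-expand
$$ \Phi_1^{p-1}=(\Phi_1^{(0)})^{p-1}\Bigl(1+\tfrac{n\kappa}{2ps\Phi_1^{(0)}}\Bigr)^{p-1}=(\Phi_1^{(0)})^{p-1}+(p-1)(\Phi_1^{(0)})^{p-2}\tfrac{n\kappa}{2ps}+O(s^{-2}), $$
valid because $\Phi_1^{(0)}$ is bounded below on the considered region. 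Next expand $(\Phi_1^{(0)})^{p-1}=(p-1+bz)^{-1}=\frac{1}{p-1}-\frac{bz}{(p-1)^2}+R(z)$ with Taylor-remainder control $|R(z)|\leq Cz^2=C|y|^4/s^2$, and similarly replace $(\Phi_1^{(0)})^{p-2}$ by $\kappa^{p-2}+O(z)$. Collecting terms and using $b/(p-1)^2=1/(4p)$ together with $\kappa^{p-1}=1/(p-1)$ yields $V=-\tfrac{|y|^2-2n}{4s}+\tilde V$ with $|\tilde V|\leq C(|y|^4/s^2+z/s)\leq C(1+|y|^4)/s^2$, as required.

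\medskip
For part $(ii)$ the work is purely algebraic: inspecting \eqref{defini-V-1-1}--\eqref{defini-V-2-2}, one sees that each term of $V_{1,2}$ and $V_{2,1}$ carries a factor $\Phi_2^{2j-1}$ or $\Phi_2^{2j+1}$ with $j\geq 1$ (resp.\ $j\geq 0$), so at least one factor of $\Phi_2$; while each nonzero term of $V_{1,1}$ and $V_{2,2}$ carries a factor $\Phi_2^{2j}$ with $j\geq 1$, hence at least two factors of $\Phi_2$. Combined with $\|\Phi_1\|_\infty\leq C$, the sup-norm bounds $\|V_{1,2}\|_\infty+\|V_{2,1}\|_\infty\leq C\|\Phi_2\|_\infty\leq C/s$ and $\|V_{1,1}\|_\infty+\|V_{2,2}\|_\infty\leq C\|\Phi_2\|_\infty^2\leq C/s^2$ are immediate, as are the pointwise refinements $|V_{1,2}|+|V_{2,1}|\leq C|\Phi_2|\leq C(1+|y|^2)/s^2$ and $|V_{1,1}|+|V_{2,2}|\leq C|\Phi_2|^2\leq C(1+|y|^2)^2/s^4\leq C(1+|y|^4)/s^4$ from the pointwise bound on $\Phi_2$ established at the start.

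\medskip
The only nontrivial point is the refined expansion \eqref{the-potential-V-inside-blowup-region}--\eqref{defini-tilde V.}: one must carefully keep track of two simultaneous small parameters ($z/s$ coming from the $n\kappa/(2ps)$ correction in $\Phi_1$, and $z^2$ coming from the second-order Taylor remainder of $(p-1+bz)^{-1}$), and show that on $|y|\leq 2K\sqrt s$ both fit under the single bound $C(1+|y|^4)/s^2$. Everything else is bookkeeping.
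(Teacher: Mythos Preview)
Your proposal is correct and follows essentially the same route as the paper: the paper's own proof simply says that item $(ii)$ is read off directly from the definitions of the $V_{j,k}$ and that item $(i)$ is handled as in Lemma~B.1 of \cite{NZens16}, which amounts to exactly the Taylor-expansion computation you wrote out. Your write-up is more explicit than what appears in the paper, but the underlying argument---bounding $\Phi_1$, $\Phi_2$ pointwise and in $L^\infty$, then expanding $(p-1+bz)^{-1}$ to first order in $z$ and handling the $n\kappa/(2ps)$ correction via the binomial formula---is the standard one and matches the referenced lemma.
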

\begin{proof}  We see that  item $(ii)$ is derived  directly from the defintion of $V_{j,k}$.   In addition to that,  the proof of    $(i)$ is quite similar to  Lemma B.1,  page 1270   in \cite{NZens16}. 
\end{proof}

Now, we give a Taylor expansion of the quadratics  terms  $B_1$ and $B_2 $ given in \eqref{defini-quadratic-B-1} and   \eqref{defini-term-under-linear-B-2} .
\begin{lemma}[The quadratic terms $B_1 (q_1,q_2)$ and $B_2 (q_1,q_2)$]\label{lemma-quadratic-term-B-1-2} We consider $B_1 (q_1,q_2) $ and $B_2 (q_1,q_2)$  as defined  in  \eqref{defini-quadratic-B-1} and   \eqref{defini-term-under-linear-B-2} respectively. For all $A \geq 1,$ there exists $s_8 (A) \geq 1$ such that for all $s \geq s_{8} (A),$  if $(q_1,q_2) (s) \in V_A(s),$ then 
\begin{eqnarray}
\left|  B_1 (q_1,q_2)   \right| &\leq &  C \left( |q_1|^2 + |q_2|^2 \right),\label{estimate-B-1-q-1-2-inside-blowup}\\
\left|  B_2 (q_1, q_2) \right| &\leq & C \left(   \frac{|q_1|^2}{s} + |q_1 . q_2|   + |q_2|^2\right).   \label{estimate-B-2-q-1-2-inside-blowup}
\end{eqnarray}
\end{lemma}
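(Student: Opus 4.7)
First, I would observe that by inspection of definitions \eqref{defini-quadratic-B-1} and \eqref{defini-term-under-linear-B-2} together with the definitions \eqref{defi-mathbb-A-1-2} of $F_1, F_2$, the subtracted linear coefficients are exactly $(\partial_{u_1}F_k)(\Phi_1,\Phi_2)$ and $(\partial_{u_2}F_k)(\Phi_1,\Phi_2)$ for $k=1,2$. Hence $B_1$ and $B_2$ are precisely the second-order Taylor remainders of $F_1$ and $F_2$ expanded around $(\Phi_1,\Phi_2)$. Writing the integral form of the remainder, one gets
$$B_k(q_1,q_2) = \int_0^1 (1-t)\left[\partial_{11}F_k\, q_1^2 + 2\,\partial_{12}F_k\, q_1 q_2 + \partial_{22}F_k\, q_2^2\right]\!(\Phi_1+tq_1,\Phi_2+tq_2)\,dt,$$
so everything reduces to uniform bounds on $\partial_{ij}F_k$ along the segment.

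Next I would note that $\Phi_1$ is uniformly bounded and $\Phi_2 = O(1/s)$ uniformly on $\mathbb{R}^n$ (this follows directly from \eqref{defi-Phi-1}--\eqref{defi-Phi-2}, since the factor $|y|^2/s^2$ in $\Phi_2$ times the decaying power is bounded by $C/s$). For $(q_1,q_2)\in V_A(s)$, Lemma \ref{lemma-estiam-q-1-2-in V-A}$(i)$ gives $\|q_1\|_{L^\infty}+\|q_2\|_{L^\infty}\to 0$ as $s\to\infty$, so for $s\geq s_8(A)$ large enough the arguments $(\Phi_1+tq_1,\Phi_2+tq_2)$ stay in a fixed bounded subset of $\mathbb{R}^2$. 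Since $F_1,F_2$ are polynomials, any polynomial expression in these arguments is then bounded.

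For \eqref{estimate-B-1-q-1-2-inside-blowup}, this is immediate: $\partial_{ij}F_1$ are polynomials in $(u_1,u_2)$, hence uniformly bounded on bounded sets, so $|B_1|\leq C(|q_1|^2+|q_1q_2|+|q_2|^2)$, and the cross term is absorbed via $|q_1q_2|\leq(|q_1|^2+|q_2|^2)/2$.

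The main obstacle is the $1/s$ factor in \eqref{estimate-B-2-q-1-2-inside-blowup}, which requires a structural observation on $F_2$. Every term of $F_2(u_1,u_2)=\sum_j C_p^{2j+1}(-1)^j u_1^{p-2j-1}u_2^{2j+1}$ carries $u_2$ to an odd and hence at least first power; therefore $\partial_{11}F_2$ and $\partial_{22}F_2$ factor as $u_2$ times a polynomial, while $\partial_{12}F_2$ need not vanish at $u_2=0$. Evaluating along the segment and using $|\Phi_2|\leq C/s$ together with the smallness of $q_2$, I would conclude
$$|\partial_{11}F_2(\Phi_1+tq_1,\Phi_2+tq_2)|+|\partial_{22}F_2(\Phi_1+tq_1,\Phi_2+tq_2)|\leq C(|\Phi_2|+|q_2|)\leq C/s,$$
together with $|\partial_{12}F_2|\leq C$. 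Plugging these into the Taylor identity yields $|B_2|\leq C|q_1|^2/s+C|q_1q_2|+C|q_2|^2$, which is \eqref{estimate-B-2-q-1-2-inside-blowup}. Once the odd-parity of $F_2$ in $u_2$ is noticed, the $1/s$ gain is forced by the decay $\Phi_2=O(1/s)$, and no further analysis is needed.
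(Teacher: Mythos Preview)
Your proof is correct and follows essentially the same approach as the paper: both recognize $B_k$ as the Taylor remainder of $F_k$ about $(\Phi_1,\Phi_2)$ and bound it using the polynomial structure of $F_1,F_2$ together with the smallness guaranteed by $V_A(s)$. The paper writes the exact finite Taylor expansion at $(\Phi_1,\Phi_2)$ (which terminates since $F_1,F_2$ are polynomials) and then invokes the definitions, whereas you use the second-order integral remainder evaluated along the segment; both are equivalent here. Your proof is in fact more explicit than the paper's on the one nontrivial point, namely the $1/s$ gain on the $|q_1|^2$ term in \eqref{estimate-B-2-q-1-2-inside-blowup}: the paper leaves this implicit, while you correctly isolate the mechanism (every monomial of $F_2$ carries an odd power of $u_2$, so $\partial_{11}F_2$ and $\partial_{22}F_2$ vanish at $u_2=0$ and hence are $O(|\Phi_2|+|q_2|)=O(1/s)$ along the segment).
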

\begin{proof}
We  first  recall the  two functions   $F_1 (u_1,u_2)$ and $ F_2(u_1,u_2)$ which  are  defined  in  \eqref{defi-mathbb-A-1-2}.   As a matter of facts,  they  belong to $C^\infty(\mathbb{R}^2)$. Then, by applying  a  Taylor expansion to $F_1,F_2,$ we obtain
\begin{eqnarray*}
F_1 (\Phi_1  + q_1, \Phi_2 + q_2)  &  =  &  \sum_{j,k \leq p} \frac{1}{j! k!}\partial_{u_1^ju_2^k}^{j+k} F_1 (\Phi_1, \Phi_2) q_1^j q_2^k, \\
F_2 \left(\Phi_1  + q_1,  \Phi_2 + q_2 \right) & = &\sum_{j,k \leq p} \frac{1}{j! k!}\partial_{u_1^ju_2^k}^{j+k} F_2 (\Phi_1, \Phi_2) q_1^j q_2^k.
\end{eqnarray*}
Then,  \eqref{estimate-B-1-q-1-2-inside-blowup} and \eqref{estimate-B-2-q-1-2-inside-blowup} follow  by  definition of  $B_1, B_2$ and also the definition of the shrinking set $V_A(s)$.
\end{proof}
In the following lemma, we give  various estimates involing the rest terms $R_1$ and $R_2$ defined  in  \eqref{defini-the-rest-term-R-1} and \eqref{defini-the-rest-term-R-2}.
\begin{lemma}[The rest terms $R_1, R_2$]\label{lemma-rest-term-R-1-2} 
For all $s \geq 1,$ we consider  $R_1, R_2$  defined  in \eqref{defini-the-rest-term-R-1} and \eqref{defini-the-rest-term-R-2}. Then,  
\begin{itemize}
\item[$(i)$]  For all $s \geq 1$ and $y \in \mathbb{R}^n$
\begin{eqnarray*}
R_1 (y,s) &= & \frac{c_{1,p}}{ s^2}   +  \tilde R_1 (y,s),\\
 R_2 (y,s)  &=&  \frac{c_{2,p}}{s^3}  + \tilde R_2 (y,s),
\end{eqnarray*}
where $c_{1,p} $and  $ c_{2,p}$ are constants depended on $p$ and   $\tilde  R_1, \tilde  R_2$ satisfy: for all $|y| \leq 2 K\sqrt{s}$ 
\begin{eqnarray*}
|\tilde R_1 (y,s) | & \leq &  \frac{C (1 + |y|^4)}{s^3},\\
| \tilde R_2 (y,s) | & \leq &  \frac{C (1 + |y|^6)}{s^4}.
\end{eqnarray*}
\item[$(ii)$] Moreover, we have for all $s \geq 1$
\begin{eqnarray*}
\| R_1(.,s)\|_{L^\infty(\mathbb{R}^n)}  & \leq  & \frac{C}{s},\\
\| R_2(.,s)\|_{L^\infty(\mathbb{R}^n)}  & \leq & \frac{C}{s^2},
\end{eqnarray*}
\end{itemize}
\end{lemma}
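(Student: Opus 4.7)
The plan is to exploit the fact that $\Phi_1,\Phi_2$ were engineered in Section~\ref{section-approach-formal} as approximate solutions, so that when substituted in the similarity-variable equations the residues $R_1,R_2$ are small of a definite order in $1/s$. I will work in the self-similar variable $z=y/\sqrt s$ and note that, with $b=(p-1)^2/(4p)$,
\begin{equation*}
\Phi_1(y,s)=R_{1,0}(z)+\frac{n\kappa}{2ps},\qquad \Phi_2(y,s)=\frac{R_{2,1}(z)}{s}-\frac{2n\kappa}{(p-1)s^2},
\end{equation*}
where $R_{1,0}$ and $R_{2,1}$ are the inner profiles \eqref{solu-R-0} and \eqref{solu-varphi_1}.

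For item $(i)$, I would plug these expressions into \eqref{defini-the-rest-term-R-1}--\eqref{defini-the-rest-term-R-2}, convert every $y$-derivative into a $z$-derivative (picking up factors of $1/\sqrt s$), and Taylor-expand $F_1,F_2$ in powers of $1/s$ about $(\Phi_1,\Phi_2)=(R_{1,0},0)$. The ODE \eqref{equa-R-1-0} (resp.\ \eqref{equa-R-2-1}) cancels the zero-order (resp.\ $O(1/s)$) contribution. For $R_1$ the next term has the form $\mathcal B_1(z)/s$, and a direct computation shows that $\mathcal B_1(0)=0$ (thanks to the shift $n\kappa/(2ps)$ in $\Phi_1$) and moreover $\partial_r \mathcal B_1(0)=0$ with $r=|z|^2$, thanks precisely to the choice $b=(p-1)^2/(4p)$, i.e.\ the same identity \eqref{condition-b} that made the outer profile $R_{1,1}$ analytic. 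Hence $\mathcal B_1(z)=O(|z|^4)$ near $z=0$, which on $|y|\le 2K\sqrt s$ gives $|\mathcal B_1(z)/s|\le C(1+|y|^4)/s^3$. The $1/s^2$ coefficient then produces, through the cross-term $\binom{p}{2}R_{1,0}^{p-2}(n\kappa/(2p))^2$ in the expansion of $\Phi_1^p$, a nonzero value at $z=0$ which defines $c_{1,p}$; the $z$-dependent part of this coefficient vanishes at $z=0$ and contributes at most $C(1+|y|^2)/s^3$. The same scheme applied to $R_2$, using \eqref{equa-R-2-2}, identifies the constant $c_{2,p}/s^3$ and yields the remainder bound $C(1+|y|^6)/s^4$.

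For item $(ii)$, I would return to the variable $y$ and bound each summand of $R_1,R_2$ directly using the closed forms of $\Phi_1,\Phi_2$. Splitting according to $|y|\le\sqrt s$ vs.\ $|y|\ge\sqrt s$ to handle the polynomial decay at infinity, elementary differentiation gives the uniform estimates $\|\Phi_1\|_\infty\le C$, $\|\nabla\Phi_1\|_\infty\le C/\sqrt s$, $\|y\cdot\nabla\Phi_1\|_\infty\le C$, $\|\Delta\Phi_1\|_\infty\le C/s$, $\|\partial_s\Phi_1\|_\infty\le C/s$, together with analogous estimates for $\Phi_2$ with an additional factor $1/s$. Combining these with the pointwise identity $-\tfrac{1}{2}z\cdot\nabla R_{1,0}-R_{1,0}/(p-1)+R_{1,0}^p=0$, valid for every $z\in\mathbb{R}^n$, gives $\|R_1\|_{L^\infty}\le C/s$, and pushing one order further (using \eqref{equa-R-2-1} globally in $z$) gives $\|R_2\|_{L^\infty}\le C/s^2$.

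The hard part is the algebraic verification that $\mathcal B_1(z)$ vanishes to fourth order at $z=0$ for every integer $p\ge 2$, and the analogous double cancellation for the $1/s^2$ coefficient in the expansion of $R_2$. These are exactly the identities that made the outer-inner matching of Section~\ref{section-approach-formal} work, so morally the computation must succeed, but tracking all the binomial contributions $\binom{p}{2j}\Phi_1^{p-2j}\Phi_2^{2j}$ in $F_1$, and their odd-index counterparts in $F_2$, requires careful bookkeeping. Without this fourth-order vanishing one would only obtain the much weaker bound $|\tilde R_1|\le C(1+|y|^2)/s^2$, insufficient to drive the a priori estimates used later in Lemma~\ref{control-prin-q-e-q-}.
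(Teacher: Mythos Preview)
Your approach is correct and is essentially a reorganization of the paper's proof. The paper proceeds by brute-force: it splits off $R_2^*=F_2(\Phi_1,\Phi_2)-p\Phi_1^{p-1}\Phi_2$, then Taylor-expands each of the five remaining pieces of $\bar R_2$ term-by-term in the variable $|y|^2/s$ (see \eqref{Taylor-expan-Delta-Phi_2}--\eqref{taylor-expansion-partial-s-Phi-2}) and checks by hand that the coefficients of $1/s^2$, $|y|^2/s^2$, $|y|^2/s^3$ and $|y|^4/s^3$ all cancel, leaving only the constant $-n(n+4)\kappa/((p-1)s^3)$. You instead group everything by powers of $1/s$ with $z$-dependent coefficients and trace the cancellations back to the ODEs \eqref{equa-R-1-0}, \eqref{equa-R-2-1} and the choice $b=(p-1)^2/(4p)$ from \eqref{condition-b}. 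This is more conceptual and explains \emph{why} the cancellations occur (they are exactly what made the inner--outer matching in Section~\ref{section-approach-formal} consistent); the paper's route is more mechanical but avoids having to articulate the link with the formal construction. For item $(ii)$ both arguments coincide: write $\Phi_1,\Phi_2$ in terms of $R_{1,0},R_{2,1}$ and use that \eqref{equa-R-1-0} and \eqref{equa-R-2-1} hold for all $z\in\mathbb R^n$.

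One imprecision: your appeal to \eqref{equa-R-2-2} for $R_2$ is misleading, since $R_{2,2}$ is not part of $\Phi_2$. What you actually need is the analogue of your $\mathcal B_1$ analysis one order down: after \eqref{equa-R-2-1} kills the $1/s$ term, the $1/s^2$ coefficient $\mathcal B_2(z)$ must be shown to satisfy $\mathcal B_2(0)=0$ and $\partial_r\mathcal B_2(0)=0$ (with $r=|z|^2$), and this follows---just as for $\mathcal B_1$---from the additive constant $-2n\kappa/((p-1)s^2)$ in $\Phi_2$ together with $b=(p-1)^2/(4p)$, not from \eqref{equa-R-2-2} per se. With that correction your scheme goes through.
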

\begin{proof}
   The proofs for $R_1$ and $R_2 $ are quite similar.  For that  reason, we only give the proof of  the estimates   on $R_2$.  This  means that  we need to  prove the following estimates:
\begin{equation}\label{estimate-R-2-inside}
R_2 (y,s)  = -  \frac{n(n+4) \kappa}{(p-1)s^3}  + \tilde R_2 (y,s),
\end{equation}
with 
$$ | \tilde R_2 (y,s) |  \leq    \frac{C (1 + |y|^6)}{s^4}, \forall |y| \leq 2 K \sqrt s$$
and  
\begin{equation}\label{estima-all-spa-R-2}
\| R_2(.,s)\|_{L^\infty}  \leq \frac{C}{s^2}.
\end{equation}
We first  from \eqref{defini-the-rest-term-R-2}, recall the definition of $R_2(y,s)$
$$  R_2 (y,s) = \Delta \Phi_2 - \frac{1}{2} y \cdot \nabla \Phi_2  - \frac{\Phi_2}{p-1} + F_2 (\Phi_1, \Phi_2) - \partial_s \Phi_2, $$
Then, we can rewrite  $R_2$ as follows
$$R_2 (y,s) =  \Delta \Phi_2 - \frac{1}{2} y \cdot \nabla \Phi_2  - \frac{\Phi_2}{p-1} + p \Phi_1^{p-1} \Phi_2 - \partial_s \Phi_2  + R_2^* (y,s),$$
where 
$$  R_2^*(y,s) = F_2 (\Phi_1, \Phi_2)  - p \Phi_1^{p-1} \Phi_2.$$
 Using  the definition  of $F_2$ in \eqref{defi-mathbb-A-1-2},   and the defintions of  $\Phi_1, \Phi_2$ in  \eqref{defi-Phi-1} and \eqref{defi-Phi-2},  we derive   that 
\begin{eqnarray*}
|R_2^* (y,s) | & \leq  &\frac{C ( 1 + | y|^6)}{s^3}, \quad \forall  |y| \leq 2 K \sqrt s,\\
& \text{ and } &\\
\| R_2^*(y,s)\|_{L^\infty } & \leq  &\frac{C}{s^2}.
\end{eqnarray*} 
In addition to that, we introduce $\tilde R_2$ as follows: 
$$ \bar R_2 (y,s) =   \Delta \Phi_2 - \frac{1}{2} y \cdot \nabla \Phi_2  - \frac{\Phi_2}{p-1} + p \Phi_1^{p-1} \Phi_2 - \partial_s \Phi_2.$$
Then,   we  may  obtain the conclusion if  the following  two estimates  hold:
\begin{eqnarray}
 \left| \bar  R_2 (y,s)  + \frac{n(n+4)\kappa }{(p-1)s^3}\right|  &\leq & \frac{C (1 + |y|^6)}{s^4}, \label{estimates-bar-R-2-y-2-K-s}\\
 \| \bar R_2 (.,s)\|_{L^{\infty} (\mathbb{R}^n)} & \leq  & \frac{C}{s^2}.\label{estimates-all-sapce-bar-R-2}
\end{eqnarray}
\noindent \textit{+ The proof  of \eqref{estimates-bar-R-2-y-2-K-s}:}
We first aim at  expanding  $\Delta \Phi_2$ in  a polynomial in  $y$  of order less than $4$ via  the  Taylor expansion. Indeed,   $\Delta \Phi_2$ is given by
\begin{eqnarray*}
 \Delta \Phi_2 &=&  \frac{2 n }{s^2} \left( p-1 + \frac{(p-1)^2 |y|^2}{4p s} \right)^{-\frac{p}{p-1}} -  \frac{(p-1)|y|^2}{s^3} \left( p-1 + \frac{(p-1)^2}{4 p} \frac{|y|^2}{s}\right)^{- \frac{2p-1}{p-1}} \\
 & -& \frac{(n+ 2) (p-1)|y|^2}{2 s^3} \left( p-1 + \frac{(p-1)^2}{4 p} \frac{|y|^2}{s}\right)^{- \frac{2p-1}{p-1}}  + \frac{(2p-1)(p-1)^2 |y|^4}{4 p s^4} \left( p-1 + \frac{(p-1)^2}{4 p} \frac{|y|^2}{s}\right)^{- \frac{3p-2}{p-1}}. 
\end{eqnarray*}
Besides that,   we  make a   Taylor expansion  in the  variable  $z  =  \frac{|y|}{\sqrt s}$  for   $\left(  p-1  + \frac{(p-1)^2 }{4 p} \frac{|y|^2}{s} \right)^{- \frac{p}{p-1}}$   when   $|z| \leq 2 K$, and we get
$$  \left| \left( p-1 + \frac{(p-1)^2 |y|^2}{4p s} \right)^{-\frac{p}{p-1}}  - \frac{\kappa}{p-1}+ \frac{\kappa}{4 (p-1) } \frac{|y|^2}{s} \right|    \leq  \frac{C (1 + |y|^4)}{s^2}, \forall |y| \leq 2 K \sqrt s.$$
which yields 
$$  \left|  \frac{2 n }{s^2} \left( p-1 + \frac{(p-1)^2 |y|^2}{4p s} \right)^{-\frac{p}{p-1}} - \frac{2 n \kappa}{ (p-1)s^2}  +  \frac{n \kappa |y|^2}{2 (p-1)s^3} \right| \leq     \frac{C (1 + |y|^4)}{s^4} \leq  \frac{C (1 + |y|^6)}{s^4},  \quad  \forall |y|  \leq 2 K \sqrt{s}.$$
It is similar to   estimate   the other termes     in $\Delta \Phi_2$  as  the above. Finally, we obtain
\begin{equation}\label{Taylor-expan-Delta-Phi_2}
\left|   \Delta \Phi_2  -  \frac{2 n \kappa}{(p-1)s^2} +  \frac{n \kappa |y|^2}{ (p-1)s^3} +  2 \frac{k|y|^2}{(p-1)s^3} \right|   \leq   \frac{C ( 1 + |y|^6) }{s^4} , \forall |y| \leq 2 K \sqrt s.
\end{equation}
As we did  for $\Delta \Phi_2$,  we    estimate  similarly  the   other termes in $\bar R_2$: for all $|y| \leq 2 K \sqrt s$ 
\begin{eqnarray}
\left| - \frac{1}{2} y \cdot \nabla \Phi_2   +  \frac{\kappa |y|^2}{(p-1)s^2}  -  \frac{\kappa |y|^4}{4 (p-1) s^3} -  \frac{\kappa |y|^4}{4 (p-1)s^3} \right|  & \leq &  \frac{C( 1 + |y|^6)}{s^4}, \label{taylor-expan-1-2y-nabla-Phi-2}\\
\left| - \frac{\Phi_2}{p-1} + \frac{\kappa  |y|^2}{(p-1)^2 s^2}  -   \frac{\kappa |y|^4 }{ 4 (p-1)^2 s^3} -  \frac{2 n \kappa }{(p-1)^2 s^2} \right|  & \leq  &\frac{C( 1 + |y|^6)}{s^4}, \label{taylor-expan-1-p-1-Phi_2}\\
\left| p \Phi_1^{p-1}\Phi_2  -  \frac{p \kappa |y|^2}{ (p-1)^2 s^2}  + \frac{(2p -1) \kappa |y|^4}{ 4 (p-1)^2s^3} -  \frac{n \kappa |y|^2}{(p-1)s^3} + \frac{2 p n \kappa}{(p-1)^2 s^2} + \frac{n^2 \kappa}{(p-1)s^3}  \right|& \leq &  \frac{C( 1 + |y|^6)}{s^4},  \label{taylor-expan-1-p-Phi-1-p-1-Phi-2}\\
\left| - \partial_s \Phi_2  -  \frac{2 \kappa |y|^2}{(p-1)s^3} +  \frac{4 n \kappa }{ (p-1)s^3}  \right| &\leq &   \frac{C ( 1 + |y|^6) }{s^4}. \label{taylor-expansion-partial-s-Phi-2}
\end{eqnarray}
Thus,  we use  \eqref{Taylor-expan-Delta-Phi_2}, \eqref{taylor-expan-1-2y-nabla-Phi-2}, \eqref{taylor-expan-1-p-1-Phi_2}, \eqref{taylor-expan-1-p-Phi-1-p-1-Phi-2} and  \eqref{taylor-expansion-partial-s-Phi-2}  to deduce the following
$$ \left| \bar R_2 (y,s)  +  \frac{n(n+4) \kappa}{(p-1)s^3}  \right| \leq  \frac{C(1 + |y|^6)}{s^4},  \quad \forall  |y| \leq 2 K \sqrt s,$$
and    \eqref{estimates-bar-R-2-y-2-K-s} follows
 
\noindent \textit{+ The proof \eqref{estimates-all-sapce-bar-R-2}:}   We rewrite  $\Phi_1, \Phi_2$ as follows
 $$  \Phi_1 (y,s) = R_{1,0} (z)  + \frac{n \kappa}{2 p s}  \text{ and  } \Phi_2 (y,s)  = \frac{1}{s} R_{2,1} (z) - \frac{2n \kappa }{(p-1)s^2} \text{ where } z = \frac{y}{\sqrt s}, $$ 
 where   $R_{1,0}$ and  $R_{2,1}$ are defined  in \eqref{solu-R-0} and \eqref{solu-varphi_1},  respectively. In addition to that, we rewrite $\bar R_2$ in termes of  $R_{1,0}$ and  $R_{2,1}$, and we note that  $R_{1,0}$ and  $R_{2,1}$ satisfy \eqref{equa-R-1-0} and  \eqref{equa-R-2-1}.   Then, it follows that
 $$ |\bar R_2 (y,s)| \leq \frac{C}{s^2}, \forall y \in \mathbb{R}^n.$$ 
 Hence, \eqref{estimates-all-sapce-bar-R-2} follows. This  concludes the proof of  this Lemma.
\end{proof}
 \bibliographystyle{alpha}
\bibliography{mybib} 

\vspace{1cm}
 \textbf{Address:}
Paris 13  University, Institute Galil\'ee, Laboratory of Analysis, Geometry and Applications, CNRS UMR 7539, 95302, 99 avenue J.B Cl\'ement, 93430 Villetaneuse, France  

\texttt{e-mail: duong@math.univ-paris13.fr} 

\end{document}